\documentclass[a4paper, 11pt,  dvipsnames]{amsart}
\usepackage[utf8]{inputenc}
\usepackage[T1]{fontenc}
\usepackage{AlegreyaSans}
\usepackage{Alegreya}
\usepackage[english]{babel}
\usepackage{amsmath, amssymb, amsthm, amsfonts, mathtools, nicefrac, mathabx}
\usepackage{dsfont} 
\usepackage{amsbsy} 
\usepackage{floatflt} 
\usepackage{etoolbox}
\usepackage{pdfpages}
\usepackage{amsrefs}
\usepackage{graphicx} 
\usepackage{xcolor}
\usepackage{extarrows}
\usepackage[all]{xy}
\usepackage{tikz}
\usetikzlibrary{arrows}
\usepackage{tikz-cd}
\usepackage{mathtools}
\usepackage[format=plain,margin=30pt,labelfont=bf, labelsep=space]{caption} 
\usepackage[left=2.9cm, right=2.9cm, bottom=3cm]{geometry}
\usepackage{hyperref} 

\hypersetup{
	colorlinks=true,
	linktocpage=true,
	linkcolor=[RGB]{200,0,100},
	citecolor=[RGB]{100,0,200},      
	urlcolor=cyan,
}

\usepackage[noabbrev,capitalize]{cleveref} 

\theoremstyle{plain}
\newtheorem{thm}{Theorem}

\newtheorem*{thm*}{Theorem}
\newtheorem{lem}[thm]{Lemma}
\newtheorem{prop}[thm]{Proposition}
\newtheorem{cor}[thm]{Corollary}
\newtheorem*{cor*}{Corollary}
\theoremstyle{definition}
\newtheorem{defn}[thm]{Definition}
\newtheorem{exmp}[thm]{Example}
\newtheorem*{exmp*}{Example}

\theoremstyle{remark}
\newtheorem*{rem*}{Remark}
\newtheorem{rem}[thm] {Remark}

\AfterEndEnvironment{defn}{\noindent\ignorespaces}
\AfterEndEnvironment{lem}{\noindent\ignorespaces}
\AfterEndEnvironment{thm}{\noindent\ignorespaces}
\AfterEndEnvironment{thm*}{\noindent\ignorespaces}
\AfterEndEnvironment{cor}{\noindent\ignorespaces}
\AfterEndEnvironment{cor*}{\noindent\ignorespaces}
\AfterEndEnvironment{exmp}{\noindent\ignorespaces}
\AfterEndEnvironment{exmp*}{\noindent\ignorespaces}
\AfterEndEnvironment{rem}{\noindent\ignorespaces}
\AfterEndEnvironment{rem*}{\noindent\ignorespaces}
\AfterEndEnvironment{proof}{\noindent\ignorespaces}
\AfterEndEnvironment{prop}{\noindent\ignorespaces}

\newcommand{\R}{{\mathbb R}}
\newcommand{\C}{{\mathbb C}}
\newcommand{\Z}{{\mathbb Z}}
\newcommand{\N}{{\mathbb N}}
\newcommand{\Q}{\mathbb{Q}}

\newcommand{\cA}{\mathcal{A}}

\newcommand{\cO}{\mathcal{O}}

\newcommand{\cX}{\mathcal{X}}

\newcommand{\supp}{\operatorname{supp}}

\newcommand{\sym}{\operatorname{\mathrm{Sym}}}
\newcommand{\prob}{\operatorname{Prob}}

\title{Random operators, spectral measures, and local empirical convergence in sofic groups}
\author{Miguel Donoso-Echenique}
\author{Felix Pogorzelski}
\author{Michael Schr\"odl-Baumann}

\address{Miguel~Donoso-Echenique, Fachbereich 10 Mathematik und Informatik, Universit\"at M\"unster}
 \email{mdonosoe@uni-muenster.de, migueldonosoe@gmail.com}

\address{Felix~Pogorzelski, Institut f\"ur Mathematik, Universit\"at Leipzig
\\ and  
Israel Institute of Advanced Studies (IIAS), Jerusalem}
\email{felix.pogorzelski@math.uni-leipzig.de}

\address{Michael Schr\"odl-Baumann}
\email{mschroedlbaumann@gmail.com}

\begin{document}

\begin{abstract}
    In this paper, we consider the problem of approximating the spectral distribution for a class of random operators over sofic groups. For this purpose, we make use of  the concept of locally and empirically converging measures defined by Austin.  
    We establish weak convergence of the density of states measures along random finite-volume analogs. For operators taking finitely many rational values, we prove a L\"uck type approximation theorem yielding pointwise convergence of the spectral measures. In the wider context of arbitrary complex coefficients, we show pointwise convergence of the spectral distribution functions along adapted approximants with varying rational coefficients. Our results apply  to the class of periodically approximable groups as defined by Bowen. More generally, we show that every invariant probability measure on a finite-state configuration space that arises as a weak-$\ast$ limit of periodic measures admits an approximation in the local and empirical sense. 
\end{abstract}

\maketitle

\tableofcontents

\thispagestyle{empty}

    \section{Introduction and main results}

    \subsection{Background of results}
    Discrete random Schr\"odinger  operators are key mathematical objects to describe quantum mechanical phenomena. The past decades have witnessed far reaching developments in various directions. We refer the reader to the monographs and surveys \cites{CL90,KM07,Kir08,DF1,DF2} and references therein for in-depth information. 

    For operators in geometries with enough symmetry, random Schr\"odinger operators belong to a class of equivariant families of measurable operators that are elements of a von Neumann algebra. As such, their spectral distribution, measuring the portion of spectral mass below a prescribed energy level, can
    be defined by a trace. This leads to the notion of the integrated density of states (IDS), defined in analogy to the  
 Pastur-Shubin trace formula referring to  work of Pastur \cites{Pas71,Pas72,Pas73} and Shubin \cite{Shu79} on the spectral distribution of random operators in both continuous and discrete settings. More investigations in that direction followed soon after, see e.g.\@ \cites{Fuk74,FNN75,FN76,Nak77,Kot77,KM82c,AS83,CS83,BLT85,Bel86}. 
 
 The IDS and the underlying density of states measure are central quantities in  operator theory, as they may contain spectral information for instance on potential gap labels, spectral type, Lifshitz tails and localization phenomena, see the  monographs and papers cited above.

For discrete operators, the underlying Hilbert space is typically modeled over a graph with its vertices as potential positions of a particle, and edges encode potential change of positions.
An important problem is the question of approximability of the IDS by normalized eigenvalue counting functions of finite dimensional analogs, backing the interpretation of the IDS measuring the normalized number of states below a fixed energy level. Here one might investigate weak convergence of spectral measures or convergence in a strong form, where the spectral distribution functions converge uniformly over all energy values. 
 
 In the setting of Cayley graphs and deterministic operators, this strong variant can be phrased as a  L\"uck type approximation problem, going back to the seminal paper of L\"uck \cite{Luc94} on approximation of $\ell^2$-Betti numbers by ordinary Betti numbers. Related approximation results for $\ell^2$-invariants can for instance be found in \cites{Fab98,DM98,Eck99}. 
 This naturally raises the issue of approximating the normalized eigenspace dimensions of a group ring element by eigenvalue counting functions associated with finite dimensional analogs over sofic approximations.  This form of the strong approximation conjecture has been verified for amenable groups by Elek \cite{Ele06}, generalizing the result from \cite{Dod+03} on torsion-free elementary amenable groups. For group ring elements with algebraic coefficients, the strong form of approximation was proven by Thom in \cite{thom},
 thus generalizing results from \cite{Dod+03} and also
 settling in the affirmative the algebraic eigenvalue conjecture stated in \cite{Dod+03}. The L\"uck approximation theorem for arbitrary coefficients was proven by Jaikin-Zapirain in \cite{Jai19}. 
 Pointwise convergence of the spectral measures, and therefore also uniform convergence of the spectral distribution functions along graph sequences
 has been verified in certain situations as well, see \cites{MY02, MSY03, Ele08,Pog14} for hyperfinite sequences  or \cite{abert-thom-virag} for operators with integer coefficients. A result on continuity of $\ell^2$-Betti numbers with respect to Benjamini-Schramm convergence of simplicial complexes has been obtained in \cite{schrodl2020}. In the recent paper \cite{LP24}, the authors study numerical aspects on convergence in residually finite groups.

 For random operators over amenable groups, the strong form of convergence along restrictions on F{\o}lner sequences has been 
verified in many situations without any assumptions on the underlying coefficients, see e.g.\@ \cites{LS05, GLV07, LMV08, LV09, LSV11, LSV12, Sch12, PSS13, PS16, SSV20}. Considerable progress towards sofic Cayley graphs was made by Schumacher and Schwarzenberger \cite{SS15}, who proved weak convergence of the IDS along sofic approximations for (potentially unbounded) random Hamiltonians with suitable stochastic independence properties. 

The goal of the present paper is to extend the dynamical scope of IDS convergence to general stationary and ergodic random processes over sofic groups. In particular, we aim at models beyond amenable structures and beyond i.i.d.\@ models, 
thus being able to cover periodic and aperiodic random models without the presence of a classical ergodic theorem.

To this end, we apply the notions of {\em locally weak-$*$-convergence ($lw^{*}$-convergence)}, as well as {\em local and empirical convergence ($le$-convergence)} of probability measures on configuration spaces over sofic approximations 
developed in the context of sofic entropy by Austin \cite{Aus16}. (In the latter paper, $le$-convergence was originally introduced under the name {\em quenched convergence}, but the terminology was changed by Austin himself a bit later, see \cite{Aus19} and the explanations on p.~2 therein.) Roughly speaking, these convergences reflect a soficity phenomenon, and  an ergodicity phenomenon for random processes, respectively. For a large class of equivariant operators specified in Section~\ref{sec:weakconvergence}, called {\em continuous operators} below, this allows one to define random finite analogs in a natural fashion (and the approximating sequence will be called an {\em induced approximation} below), and to pursue convergence in probability with respect to the $le$-convergent sequence of measures. The main results of our work are the following.
\begin{itemize}
    \item We study the class of so-called {\em  PA (periodically approximable) groups} and show that every law of an ergodic random process over a finite state space admits an $le$-convergent approximation, see Corollary~\ref{cor:PAEPA}. 
    \item For a general class of random operators over Cayley graphs of sofic groups, we  prove weak convergence of the spectral measures corresponding to  elements in an induced approximation, see Theorem~\ref{lemma-weakconvergence}.
    \item Going beyond weak convergence, we employ the techniques from \cite{abert-thom-virag} to prove a L\"uck-type convergence result for random operators taking finitely many rational coefficients, see Theorem~\ref{thm:MAIN}.
    \item Using monotone operator convergence for traces in von Neumann algebras, we prove approximation of the IDS in all energy values for operators taking finitely many complex coefficients via adapted finite-volume analogs with varying rational coefficients, see Corollary~\ref{cor:MAIN_approx}.  
\end{itemize}

\subsection{Description of results}
Taking random Schr\"odinger operators as a guiding example, we are now going to explain the aforementioned results in more detail. For pedagogical reasons, some notions will only be explained here in a heuristic manner. We refer to the Sections~\ref{sec:prelim} and Section~\ref{sec:soficconvergence} for the rigorous definition of all key players. 

\medskip
Fix a sofic group $G$ generated by a finite and symmetric set $S$. We denote the arising Cayley graph by $\mathrm{Cay}(G,S)$. The notion of {\em sofic groups}  was introduced by Gromov \cite{Gro99}, the terminology was coined by Weiss \cite{weiss}. Roughly speaking, a group is sofic if there is a sequence $(V_n,E_n)$ of finite graphs with their egdes $E_n$ labeled by the elements in $S$ such that asymptotically, the local graph structure about vertices in $V_n$ matches the local graph structure about the identity vertex $e$ in $\mathrm{Cay}(G,S)$. It is not known whether there is a non-sofic group. Fix further a compact metric space $(\mathcal{X},d)$. Then there is a natural continuous translation action of $G$ on the configuration space $\mathcal{X}^G$, endowed with the product topology. For a continuous function $F:\mathcal{X} \to \R$, a family $H^{\omega}_F:\ell^2(G) \to \ell^2(G)$ of bounded self-adjoint operators can be defined by 
\[
H_F^{\omega}u(x) = \Delta u(x) \, + \, F\big(\omega(x)\big) u (x), \quad u \in \ell^2(G), \quad x \in G, \quad \omega \in \mathcal{X}^G,
\]
where $\Delta$ denotes the combinatorial graph Laplacian. Together with a translation-invariant and  Borel probability measure $\mu$ over $\mathcal{X}^G$, we obtain a {\em $G$-process} $(G,\mathcal{X},\mu)$. We then refer to $\{H^{\omega}\}$ as a {\em random Schr\"odinger operator} over $\mathrm{Cay}(G,S)$ with potential function $F$ (and law $\mu$). In the situation where $\mu$ is ergodic, it makes sense to define its IDS via a Pastur-Shubin type formula,  
\[
N_F(\beta) = \int_{\mathcal{X}^G} \langle \delta_{e},\, 1_{]-\infty,\beta]}(H_F^{\omega})\delta_{e} \rangle\, d\mu(\omega), \quad \beta \in \R,
\]
where $1_{]-\infty,\beta]}$ denotes the spectral projection operator corresponding to the energy levels below $\beta$.  As for testing approximation by finite-volume analogs, it is natural to take a sofic approximation $(V_n,E_n)$ and to define the operators $H^{\rho}_{F,n}: \ell^2(V_n) \to \ell^2(V_n)$ acting as 
\[
H^{\rho}_{F,n}u(v) = \Delta u(x) \, + \, F\big(\rho(v)\big)u(v), \quad u \in \ell^2(V_n), \quad v \in V_n, \quad \rho \in \mathcal{X}^{V_n}, 
\]
where with a slight abuse of notation, $\Delta$ now also denotes the combinatorial Laplacian on $(V_n, E_n)$. Given $\beta \in \R$, we define 
\[
N^{\rho}_{F,n}(\beta):= \max\{i \leq |V_n|\, |\, \lambda_i \leq \beta\},
\]
where $(\lambda_i)_{i=1}^{|V_n|}$ is an increasing enumeration of all eigenvalues of $H_{F,n}^{\rho}$. Thus, $N^{\rho}_{F,n}(\beta)$ counts all eigenvalues less or equal than $\beta$, including multiplicities.  Clearly, the normalized versions of these function can only be suitable approximants for the IDS if the $\rho_n \in \mathcal{X}^{V_n}$ are carefully chosen, taking into account the law $\mu$ of the process. This will be taken care of by a sequence $(\mu_n)$ of probability measures on $\mathcal{X}^{V_n}$ locally and empirically converging to $\mu$ as defined by Austin in \cites{Aus16,Aus19}. The latter builds on locally weak-$*$ convergence ($lw^{*}$-convergence), which can be interpreted as a soficity phenomenon of the measure $\mu$. In the special situation where $\mathcal{X} = \mathcal{A}$ is a finite set (endowed with the discrete metric), $lw^{*}$-convergence is tightly related to Benjamini-Schramm convergence of graphs \cite{BS01} with their vertices being additionally decorated by finitely many labels. Accordingly, we refer to $(V_n,E_n,\mu_n)$ as a {\em sofic model} for the process $(G,\mathcal{X},\mu)$. In case $\mu$ is additionally ergodic, the convergence of $(\mu_n)$ is even in the local and empirical sense. This roughly means the validity of an ergodic type theorem for continuous functions $f:\mathcal{X}^G \to \C$, in that for large $n \in \N$,  
with high probablitiy $\mu_n$, one has
\begin{align} \label{eqn:ergodicheuristics}
\frac{1}{|V_n|} \sum_{v \in V_n} f\left( \Pi_v(\rho) \right) \, \, \approx \, \, \int_{\mathcal{X}^G} f(\omega)\, d\mu(\omega), \quad \rho \in \mathcal{X}^{V_n}, 
\end{align}
where $\Pi_v: \mathcal{X}^{V_n} \to \mathcal{X}^G$ denotes a natural lift of the configuration $\rho$ with center $v$ into a configuration in $\mathcal{X}^G$. Such sofic models exist for various classes of $G$-processes, such as Bernoulli systems \cite{Aus16}*{Lemma~5.2 and Corollary~5.8}, and (for a  slightly reformulated notion) also for certain algebraic actions or actions on profinite groups \cite{Hay19}*{Proposition~1.2}. Another source are so-called {\em PA groups}, where PA stands for {\em periodically approximable}. The latter were introduced by Bowen \cite{Bow03} as groups with the property that every law $\mu$ of a finite-state process can be weak-$*$-approximated by periodic probability measures, i.e.\@ by invariant probability measures with finite support. Kechris studied {\em property MD} in  \cite{Kec12}, a notion equivalent to PA. A strenghtening is the notion of  EPA groups for which the ergodic periodic measures are weak-$*$ dense in the space of invariant measures. To the best of our knowledge it is  an open problem whether the properties PA and EPA are equivalent.   
For more information on connections between these different notions we refer to the survey \cite{BK20}. The class of PA groups contains all finite rank free groups and all   residually finite amenable groups. We refer to Section~\ref{sec:PA} for further details. 

\medskip

Denote by $\mathcal{A}$ a finite set and define by $Q$ the largest residually finite quotient of $G$ with canonical homomorphism $\pi:G \to Q$. We write $\mathcal{P}(\mathcal{A}^G)$ for the set of all periodic probability measures on $\mathcal{A}^G$. As for the existence of sofic models, we obtain the following for PA groups.

\medskip

{\bf Proposition A.} {\em  If $Q$ has property PA, then the push forward under $\pi$ of the space of $Q$-invariant probability measures on $\mathcal{A}^Q$ is equal to the weak-$*$-closure $\overline{\mathcal{P}(\mathcal{A}^G)}$. In particular, every process $(G,\mathcal{A},\mu)$ over a PA group $G$ admits a sofic model $(V_n,E_n,\mu_n)$.}

\medskip
We will see further that for ergodic processes, the sequence $(\mu_n)$ obtained in the proposition above le-converges to $\mu$. 
The proof of the proposition are given in Proposition~\ref{prop:EPA_quotient} and Corollary~\ref{cor:PAEPA}. Since PA groups are necessarily residually finite, one can even show that the sofic approximations arise from group homomorphisms. As an example, many Baumslag-Solitar groups have residually finite quotients that have property PA, see Example~\ref{exa:BSgroups}. 

\medskip
We return to the issue of spectral approximation, making use of the relation~\eqref{eqn:ergodicheuristics}.
We write $N_F^{\prime}$ and $N^{\rho\,\prime}_{F,n}$ for the normalized
densitiy of states measures associated with $N_F$ and $N^{\rho}_{F,n}$, respectively. We prove the following result on weak convergence.

\medskip
{\bf Theorem B.} {\em Let $(V_n, E_n,\mu_n)$ be a sofic model for an ergodic $G$-process $(G,\mathcal{X},\mu)$. Then for every $\varepsilon > 0$ and every bounded continuous function $f:\R \to \R$, we have 
\[
\lim_{n \to \infty} \mu_n \big( \big\{ \rho \in \mathcal{X}^{V_n}:\, |N_{F,n}^{\rho\,\prime}(f) - N_F^{\prime}(f)| < \varepsilon \big\} \big) =1. 
\]
}

\medskip
The proof will be given in Theorem~\ref{lemma-weakconvergence}, which applies to the larger class of continuous operators and their induced approximations, see Section~\ref{sec:weakconvergence}. We briefly compare our results with the findings in \cite{SS15}. We point out that the above convergence can be realized for ergodic processes without further independence or mixing properties. For random Hamiltonians and sofic approximations randomized by an i.i.d.\@ process, Schumacher and Schwarzenberger obtained  in \cite{SS15} almost sure convergence.
In contrast to the continuous operator considered in this paper, this also includes a certain class of unbounded operators. Related results on weak convergence of the density of states measure with respect to Benjamini-Schramm convergence can be found in \cites{abert-thom-virag,AS19,BC14,BGM24,BL10,BSV17,Ele08,PogPhD}. For an anlogous result in the context of convergence of Delone dynamical systems, see \cite{BP20}. For Schr\"odinger operators with i.i.d.\@ potential on the Canopy graph, an approximation along exhaustive finite sets has been given in \cite{AW06}. {An interesting follow-up question is the possibility of convergence of spectra (as sets) in sofic or residually finite models. Under convergence of topological dynamical systems, this has been investigated in depth in \cite{BBdN18}.}

\medskip

Towards a strong variant of spectral approximation, we prove the following for potential functions with rational values. 

\medskip
{\bf Theorem C.} {\em Let $\mathcal{X}=\mathcal{A}$ be finite and let $(V_n,E_n,\mu_n)$ be a sofic model for the ergodic $G$-process $(G,\mathcal{A},\mu)$. Let $F:\mathcal{A} \to \Q$ be a map. 
Then there is a sequence $(A_n)$, $A_n \subseteq \mathcal{A}^{V_n}$ with $\lim_n \mu_n(A_n)=1$ such that for every sequence $(\rho_n)$ with $\rho_n \in A_n$, we obtain 
 \begin{align*} 
	\lim_{n \to \infty} \sup_{\beta \in \R} \left| \frac{N_{F,n}^{\rho_n}(\beta)}{|V_n|} - N_F(\beta) \right| = 0.
\end{align*}
}

\medskip

The general statement for continuous operators taking finitely many rational values can be found in Theorem~\ref{thm:MAIN} together with Corollary~\ref{cor:rationalvalues}. Their proofs build on ideas from \cites{Luc94,thom} in the very form given in \cite{abert-thom-virag}. For amenable groups and restrictions of random operators on F{\o}lner averages, uniform convergence of the IDS has been verified for arbitrary coefficients in various contexts, see e.g.\@ \cites{LS05, GLV07, KSSV26,LMV08, LV09, LSV11, LSV12, Sch12, PSS13, PS16, SSV20}. 

\medskip
The situation for arbitrary real or complex coefficients is still open and we do not know whether the techniques from \cite{Jai19} can be extended to the random case. Still, we can approximate the IDS by adapted sofic models with varying rational coefficients. The following specifies to random Schr\"o\-dinger operators over PA groups. Again, we assume that $\mathcal{X} = \mathcal{A} $ is finite. For a potential $F:\mathcal{A} \to \R$, consider a monotonically increasing sequence $(F_n)$ of maps $F_n:\mathcal{A} \to \Q$ converging pointwise to $F$ as $n \to \infty$.  

\medskip
{\bf Theorem D.} {\em Let $G$ be a PA group together with an ergodic $G$-process $(G,\mathcal{A},\mu)$. Then there is a sofic model $(V_n,E_n,\mu_n)$ and a sequence $(A_n)$ with $A_n \subseteq \mathcal{A}^{V_n}$, $\lim_n \mu_n(A_n)=1$ such that for every sequence $(\rho_n)$ with $\rho_n \in A_n$, we have 
\[
\lim_{n \to \infty} \frac{N^{\rho_n}_{F_n,n}(\beta)}{|V_n|} = N_F(\beta) \quad \mbox{ for all } \quad \beta \in \R. 
\]
}

\medskip

The assertion is given in Corollary~\ref{cor:Schrodinger} and its proof is based on Theorem~\ref{thm:OPMON} and Corollary~\ref{cor:MAIN_approx}, applying to general continuous operators. A crucial viewpoint is to consider the IDS as a trace in a von Neumann algebra as constructed in \cite{LPV07}. This allows one to exploit monotonicity of the IDS with respect to the underlying trace as outlined in \cite{BK90}. Note that in general, uniform convergence in $\beta$ cannot be expected, see Remark~\ref{rem:notuniform}.

\subsection*{Organization of the paper} The Section~\ref{sec:prelim} is devoted to preliminary information on graphs, $G$-processes, bounded self-adjoint operators and their spectral calculus. The central notions of $lw^{*}$-convergence and $le$-convergence are introduced in Section~\ref{sec:soficconvergence}
after a brief discussion of so\-fi\-ci\-ty of groups. We provide a detailed study of PA groups and quotients in Section~\ref{sec:PA}, and show that finite-state space processes admit residually finite models. The Section~\ref{sec:weakconvergence} is devoted to weak convergence of the spectral measure for induced approximations of continuous operators. The strong approximation statement for finitely many rational coefficients is proven in Section~\ref{sec:luck}. For finitely many coefficients which may be complex, we prove  in Section~\ref{sec:monotoneOP} pointwise convergence of the IDS  for finite-volume analogs with varying rational coefficients. This result makes use of a Gershgorin type criterion on positive (semi-)definiteness of bounded self-adjoint operators explained in a supplementary Section~\ref{sec:supplementary}.

\subsection*{Acknowledgements} FP and MSB exchanged first ideas leading to this paper during the SMS-DMV Workshop on Groups and Dynamics: Spectra and $L^2$-invariants (Geneva, 2018), and are grateful to the meeting's organizers, in particular to Daniel Lenz for insightful discussions.
MSB thanks the Institut für Algebra und Geometrie, KIT Karlsruhe and the Institut für Mathematik, Universität Leipzig, for support that enabled a one-week visit to FP in Leipzig in 2019, during which the idea for this paper was further developed. 
FP thanks Dominik Kwietniak for an invitation to the Jagellonian University in Krak{\'o}w and for enlightening discussions about local and empirical convergence. 
Moreover, FP is grateful to Melchior Wirth for a conversation on von Neumann algebras and in particular, for pointing out the reference \cite{BK90}. The authors are grateful to valuable comments by Tim Austin, Siegfried Beckus and Ivan Veseli{\'c}
on a first draft of the paper. The manuscript was finished during a stay of FP at the Israel Institute for Advanced Studies (IIAS) in Jerusalem, and FP thanks the institute for the excellent working conditions. FP acknowledges partial support through the  German-Israeli Foundation for Scientific Research and Development (GIF) and the Deutsche Forschungsgemeinschaft (DFG). MDE was funded by the Deutsche Forschungsgemeinschaft (DFG, German Research Foundation) under Germany's Excellence Strategy during the last stages of this work.



    \bigskip

	\section{Preliminaries}~\label{sec:prelim}

    We introduce the objects, terminology and concepts needed for this paper.
	
	\subsection{Graphs}
	 A {\em graph} is a pair $(V,E)$, where $V$ is a countable and discrete set and $E \subseteq V \times V$ is a symmetric set, i.e.\@ $(x,y) \in E$ implies $(y,x) \in E$. The elements in $V$ are called {\em vertices} and the elemtents in $E$ are called {\em edges}. In $(x,y) \in E$ for some $x,y \in V$ we say that $x$ and $y$ are {\em adjacent} or synonymously, they {\em share an edge}, and we use the notation $x \sim y$.  An {\em edge-labeling} by a set $S$ is a map $\vartheta:E \to S$ and a graph with an edge-labeling is called {\em edge-labeled}. 
	 Note that every graph without an edge labeling can be treated as a graph with a constant edge labeling map $\vartheta:E \to \{\ast\}$.
	A {\em path} in $(V,E)$ is a finite tuple $(x_n) \in V^N$ for some $N \in \N$ with the property that $x_i \sim x_{i+1}$ for all $1 \leq i \leq N-1$. We call $N$ the {\em length} of the path and say that $(x_n) \in V^N$ is a {\em path of length $N-1$}.
	We further call two vertices $y,z \in V$ {\em connected by a path} if there is an $N \in \N$ and a path $(x_n) \in V^N$ with $x_1 = y$ and $x_N = z$. 	The graph $(V,E)$ is {\em connected} whenever each pair of distinct vertices is connected by a path. For a vertex $x \in V$, we define the {\em vertex degree $\mathrm{deg}(x)$ of $x$} by the cardinality of vertices $y \in V$ such that $x$ and $y$ are adjacent. The graph $(V,E)$ is said to be {\em bounded} if $\sup_{x \in V} \mathrm{deg}(x) < \infty$. For $W \subseteq V$, the {\em induced subgraph by $W$} is the graph with vertex set $W$ and edge set $E_W = E \cap (W \times W)$. If $(V,E)$ has an edge labeling $\vartheta:E \to S$, then  $(W,E_W)$ also inherits an edge labeling $\vartheta_W := \vartheta_{|E_W}$. 
	With a slight abuse of notation, we also write $W$ for the induced graph (and not just for its vertex set). 
	For two graphs $(V_1,E_1)$ and $(V_2, E_2)$ with edge labelings $\vartheta_1$ and $\vartheta_2$ into the same set $S$, 
 we say that the graphs are {\em isomorphic} if there is a bijective map $\varphi:V_1 \to V_2$ preserving the vertex- and edge relations, as well as the edge labelings. Any such map $\varphi$ is called a {\em graph isomorphism}. 
	In this situation, we write $(V_1,E_1) \cong (V_2,E_2)$ or $(V_1, E_1) \stackrel{\varphi}{\cong} (V_2, E_2)$ in order to emphasize the reference to the graph isomorphism $\varphi$. Again, if the reference to the edge sets are clear we also use the abbreviated notations $V_1 \cong V_2$ and $V_1 \stackrel{\varphi}{\cong} V_2$. 
	The {\em path disctance} on a graph $(V,E)$ is defined as $	d_E:V \times V \to \N_0 \cup \{\infty\}$, where
	\[
d_E(x,y) = \inf\big\{ N \in \N: \mbox{ there is a path of length } N \mbox{ connecting } x \mbox{ and } y\big\},
	\]
	for  $x \neq y$ and $d_E(x,x) = 0$ for all $x \in V$.
	We use here the convention that $\inf \emptyset = \infty$. Note that for a connected graph $(V,E)$, $d_E$ defines a metric on $V$, called the {\em path metric}. Given $R > 0$ and $x \in V$, we write $B_E(x,R)$ for the (closed) ball of radius $R$ around $x$, defined as 
	\[
	B_E(x,R) := \big\{ z \in V:\, d_E(x,z) \leq R \big\}.
	\]
	Finally we say that a graph $(V,E)$ is {\em finite} if $V$ has finite cardinality.

	\subsection{Cayley graphs}
		The most important example for us arises from  finitely generated groups $G$ with (finite) symmetric generating set $S \subseteq G \setminus \{e\}$, where $e$ will always denote the identity element in $G$. Recall that $S \subseteq G$ is symmetric if $S=S^{-1}= \{s^{-1}:s \in S\}$.
  The {\em (left) Cayley graph} $\mathrm{Cay}(G,S)$ on $G$ with respect to $S$ is the graph with vertex set $V=G$ and $x,y \in G$ share an edge if and only if $y=sx$ for some $s \in S$. Note that the edge set is symmetric  due symmetry of $S$. This way we obtain a graph labeling by setting $\vartheta:E \to S, \vartheta\big((x,y)\big) = yx^{-1} $.
		Moreover, $\mathrm{Cay}(G,S)$ is clearly bounded since all vertex degrees are equal to the cardinality of $S$. Since $G$ is generated by elements in $S$, $\mathrm{Cay}(G,S)$ is connected. It is easy to see from the definitions, that the path metric coincides with the function $d_S(g,h) = \big| gh^{-1} \big|_S$, where 
			\[
		|g|_S := \min \big\{ L \in \N:\, \mbox{ there exist } s_i \in S, \mbox{ such that } g= s_1s_2 \cdots s_L \big\}
		\]
		if $g \neq e$ and $|e|_S = 0$. 
		Clearly, $G$ acts by isometries on $(G,d_S)$ via right translations. Given $R > 0$ and $g \in G$, we write $B_S(g,R)$ for the (closed) ball of radius $R$ around $x$.

  \subsection{The shift action and $G$-processes} 
Let $G$ be a finitely generated group and $(\mathcal{X},d)$ be a compact metric space. The group $G$ acts continuously on the product space $\cX^G$ (endowed with the product topology) by (right)-translations, 
\[h. (\omega_g)_{g\in G}= (\omega_{gh})_{g\in G},\]
and we call this the {\em shift action}.

A Borel probability measure $\mu \in \mathrm{Prob}(\mathcal{X}^G)$ is called  {\em $G$-invariant} (or just {\em invariant} if the reference to the group is clear) if $\mu(g.B) = \mu(B)$ for every Borel set $B \subseteq \mathcal{X}^G$ and every $g \in G$, where $g.B = \{g.b:\, b \in B\}$. We write $\mathrm{Prob}(\mathcal{X}^G,G)$ for the subset of $G$-invariant Borel probability measures in $\mathrm{Prob}(\mathcal{X}^G)$. A measure $\mu \in \mathrm{Prob}(\mathcal{X}^G,G)$ is {\em ergodic} if $\mu(B) \in \{0,1\}$ for every invariant measurable set $B$, where $B$ is invariant if $\mu (g.B \, \triangle \, B) = 0$ for all $g \in G$. Following \cite{Aus16}, if $\mu\in\mathrm{Prob}(\mathcal{X}^G,G)$ the tuple $(G,\mathcal{X},\mu)$ will be called a $G${\em -process}. If in addition, $\mu$ is ergodic, then we will call say that the process is {\em ergodic}.

  \subsection{Operators} \label{secprelim:operators}
Given a countable discrete set $V$, we consider the standard  $\ell^2$-space
\[
\ell^2(V) = \big\{ f:V \to \C:\, \sum_{v \in V} |f(v)|^2 < \infty \big\},
\]
carrying the scalar product
\[
\langle f,\,k \rangle := \sum_{v \in V} \overline{f(v)}k(v), \quad f,k \in \ell^2(V),
\]
where the bar notation stands for complex conjugation. 
This gives rise to a complex Hilbert space with norm $\|f\| = \sqrt{\langle f,\,f \rangle}$ for $f \in \ell^2(V)$. For $v \in V$, we denote $\delta_v$ for the unique function in $\ell^2(V)$ such that $\delta_v(v) = 1$ and $\delta_v(w) = 0$ if $w \neq v$. A {\em (bounded) operator} is a linear map $H:\ell^2(V) \to \ell^2(V)$ such that $\|H\| := \sup_{0 \neq f \in \ell^2(V)} \|Hf\| / \|f\|$ is finite. The number
$\|H\|$ is called the {\em operator norm} of $H$. For an operator $H$, we write 
\[H(x,y):=\langle \delta_x,\,  H\delta_y \rangle \in \mathbb{C}
\]
for $x,y\in V$. We say that $H$ is {\em self-adjoint} if $H(x,y) = \overline{H(y,x)}$ for all $x,y \in V$. 

 \subsection{Spectral calculus} \label{secprelim:calculus}
 Recall that every bounded and self-adjoint operator $H:\ell^2(V) \to \ell^2(V)$ has a  spectral calculus, cf. \cite{reed1981functional}*{Chapter VII}. There is a unique
projection-valued measure $E_H$, called {\em the spectral measure for $H$}, mapping each Borel set $B \subseteq \R$ to an orthogonal projection $E_H(B)$ onto $\ell^2(V)$ corresponding to the part of the spectrum $\Sigma(H)$ of $H$ contained in $B$. In particular, for each pair $v,w \in V$, a complex Borel measure $\tau_{v,w}$ on $\R$, supported on $\Sigma(H)$, is defined by $\tau_{v,w}(B) = \langle \delta_v,\, E_H(B) \delta_w \rangle$, and one has 
\[
H(v,w) = \langle \delta_v,\, H\delta_w \rangle = \int_{\R} \lambda\, d\tau_{v,w}(\lambda). 
\] 
Moreover, for each real-valued Borel function $f:\R \to \R$ which is bounded when restricted on $\Sigma(H)$, there is a unique self-adjoint operator $f(H):\ell^2(V) \to \ell^2(V)$ such that
\[
\langle \delta_v,\, f(H)\delta_w \rangle = \int_{\R} f(\lambda)\, d\tau_{v,w}(\lambda)
\] 
for all $v,w \in V $, and one has 
\[
\big\| f(H) \big\| = \big\| f\rvert_{\Sigma(H)} \big\|_{\infty}.
\]
We write $\R[x]$ for the ring of polynomials with real coefficients. For $p \in \R[x]$, the degree $\mathrm{deg}(p)$ is the maximal power of a monomial appearing in $p$ with non-zero coefficient. Considering $p = \sum_{i=0}^K a_i x^i$ as a function $\R \to \R$, we have
\[
p(H) = \sum_{i=0}^K a_i H^i, 
\]
which is consistent with the functional calculus, since for a monomial $x \mapsto x^k$, we have 
\[
\langle \delta_v,\, H^k\delta_w \rangle =  \int_{\R} \lambda^k\, d\tau_{v,w}(\lambda)
\]
for all $v,w \in V$.

\section{Sofic groups and local and empirical convergence} \label{sec:soficconvergence}

We define and shortly discuss sofic groups in Section~\ref{sec:soficgroups} and introduce Austin's notion of local and empirical convergence in Section~\ref{sec:LEconv}.

\subsection{Sofic groups and approximations} \label{sec:soficgroups}
Consider a finitely generated group $G$, along with a concrete finite and symmetric generating set $S \subseteq G$, and with its Cayley graph $\mathrm{Cay}(G,S)$ labeled by $ \vartheta((x,y)) = yx^{-1} \in S$. Given a bounded graph $(V, E)$ with edges labeled by elements from $S$, $v \in V$ and $R > 0$ we say that $v$ is an {\em $R$-good vertex (in $(V, E)$)} if there is a graph isomorphism $\varphi:B_{E}(v,R) \to B_S(e,R)$ with $\varphi(v)=e$. We will now introduce the class of sofic groups which allow for approximation via finite graphs that locally look like $\mathrm{Cay}(G,S)$.

\medskip

Here and throughout the paper we will write $|A|$ for the cardinality of a finite set $A$. The following definition is from Weiss \cite{weiss}.
\begin{defn}[Sofic groups]\label{def-sofic}
Let $G$ be a finitely generated group with finite symmetric generating set $S$. Then $G$ is called \emph{sofic}, if for every $\varepsilon>0$ and $R > 0$ there exists a finite graph $(V,E)$ with edge labels from $S$ such that $|V_0| \geq (1-\varepsilon)|V|$ for the set $V_0 \subseteq V$ of $R$-good vertices in $(V,E)$. 
\end{defn}

It can be seen \cite{weiss} that soficity is a property of the group $G$ and is independent of the choice of the concrete finite generating set. 

\begin{defn}\label{def-sofic-approx}
A \emph{sofic approximation} to $G$ is a sequence $(V_n,\sigma_n)$, where $V_n$ is a finite set and $\sigma_n\colon G\to\sym(V_n)$ for each $n\geq 1$, such that
\begin{itemize}
\item For all $g,h \in G$, one has 
\[
\lim_{n \to \infty} \frac{\big| \big\{v \in V_n: \sigma_n^g \big( \sigma_n^h(v) \big) = \sigma_n^{gh}(v) \big\} \big|}{|V_n|} =1, 
\]
\item For all $g \in G \setminus \{e\}$, one has
\[
\lim_{n \to \infty} \frac{\big| \big\{v \in V_n: \sigma_n^g(v) \neq v  \big\} \big|}{|V_n|} =1.
\] 
\end{itemize}
\end{defn}

\medskip
The first property says that  the portion of vertices where $\sigma_n$ restricted to finite subsets of $G$ acts as a homomorphism tends to $1$ as $n \to \infty$.  The second property guarantees that the portion of vertices where the action of $\sigma_n$ restricted to finite subsets of $G$ is free tends to $1$ as $n \to \infty$. 
It can be readily checked that a  finitely generated group is sofic if and only if it has a sofic approximation. Indeed, a sofic approximation gives rise to a sequence of finite graphs $(V_n,E_n)$ which for large $n$ 
satisfy the conditions of Definition \ref{def-sofic} by defining the edge sets $E_n$ as follows
\[
E_n:=\{(v,\sigma_n^s(v))\;|\; v\in V_n,\;s\in S\}.
\]
and an edge labeling as $\vartheta_n:E_n \to S, \vartheta(v,\sigma_n^s(v)) = s$ if $v$ is a $1$-good vertex, and $\vartheta_n(v,\sigma_n^s(v)) = s_0$ otherwise, where $s_0 \in S$ is arbitrary.
Now it is not hard to see that for every $\varepsilon$ and every $R > 0$, there is some $N \in \N$ such that for all $n \geq N$, the portion of $R$-good vertices in $(V_n, E_n)$ is at least $1 - \varepsilon$. Note also that for all such graphs, its vertex degree is bounded by the cardinality of the set $S$. 
We emphasize that while soficity of the underlying group is independent of the choice of generating set $S$, the graphs $(V_n, E_n)$ do depend on $S$. Throughout this work, we will always consider sofic groups $G$ with a fixed finite and symmetric generating set $S$ such that the graphs $(V_n, E_n)$ asymptotically have the same local structure as $\mathrm{Cay}(G,S)$. Emphasizing the graph structure where necessary we will also speak of sofic approximations to $\mathrm{Cay}(G,S)$.
Note further that apriori, the graphs $(V_n, E_n)$ do not need to be edge symmetric, and they may contain loops (i.e.\@ edges of the form $(w,w)$). However, these phenomena can only occur around vertices which are not $R$-good, whose portion will asymptotically be $0$. Thus, modifying the graphs if necessary, there is no loss in generality in assuming always that all graphs from a sofic approximation are indeed edge symmetric and do not contain any loops. 
For more details and background on sofic groups, we refer to \cite{silberstein}*{Chapter~7}. 

\medskip

Recall that a group $G$ is {\em residually finite} if there is a sequence $(\Gamma_n)$ of finite-index, normal subgroups of $G$ such that $\Gamma_{n+1} \leq \Gamma_n$ for all $n \in \N$ and $\bigcap_{n=1}^{\infty} \Gamma_n = \{e\}$.

\begin{defn}
    Let $G$ be a residually finite group. A sequence $(V_n,\sigma_n)$ is called a {\em residually finite approximation of} $G$ if there exists a sequence $(\Gamma_n)$ of finite-index normal subgroups of $G$ with $\Gamma_{n+1}\leq \Gamma_n$ such that $V_n=G/\Gamma_n$ and $\sigma_n\colon G\to \mathrm{Sym}(V_n)$ is defined as $\sigma_n^g(v\Gamma_n)=gv\Gamma_n$ for every $n\geq 1$, $g\in G$ and $v\in V_n$.
\end{defn}

It is readily checked that every residually finite approximation of $G$ is a sofic approximation of $G$.

\subsection{Local and empirical convergence}  \label{sec:LEconv}
Let $(\cX,d)$ be a compact metrizable space. Given a countable set $V$, we write $\mathrm{Prob}(\cX^V)$ for the space of all Borel probability measures on the (metrizable) product topological space $\cX^V$.

\medskip

We now introduce the notions of locally weak$^*$ convergence and local and empirical convergence for measures on $\mathcal{X}^{V_n}$ for sofic approximations $(V_n, \sigma_n)$, as defined by Austin in \cite{Aus16}. To this end, let $G$ be a sofic finitely generated group and $(V_n,\sigma_n)$ a sofic approximation of $G$.  Fixing $V=V_n$ and $\sigma = \sigma_n$, for $\rho=(\rho_v)_{v\in V}\in \cX^V$, set 
\[ \Pi_v^\sigma(\rho):=(\rho_{\sigma^g(v)})_{g\in G}\in \cX^G.\]
Each $\rho \in \cX^{V}$ has an associated probability measure on $\cX^G$, called its \emph{empirical distribution},
\[P_\rho^\sigma:=\frac{1}{|V|}\sum_{v\in V}\delta_{\Pi_v^\sigma(\rho)}.\]

Fix $\mu \in \mathrm{Prob}(\mathcal{X}^G,G)$. For an open set $\cO\subset \prob(\cX^G)$, set
\[\Omega(\cO,\sigma):=\{\rho\in \cX^V\mid P_\rho^\sigma \in \cO\}.\]

\medskip

\begin{defn}[lw$^{*}$-convergence and le-convergence]
	Let $(V_n,\sigma_n)$ be a sofic approximation of $G$, $\mu \in \mathrm{Prob}(\mathcal{X}^G,G)$ and $\mu_n\in \prob(\cX^{V_n})$ for every $n\geq 1$. 
    \begin{enumerate}
        \item[$\bullet$] The sequence $(\mu_n)$ \emph{locally weak$^*$ converges} (or \emph{lw$^{*}$-converges} for short) to $\mu$, denoted by $\mu_n\overset{lw*}{\longrightarrow}\mu$, if for every weak$^*$-neighbourhood $\cO$ of $\mu$ we have 
        \[
        \lim_{n \to \infty} \frac{\big| \big\{ v \in V_n:\, \big{(} \Pi^{\sigma_n}_v \big{)}_{*} \mu_n \in \mathcal{O} \big\} \big|}{|V_n|} = 1.
        \] 
        \item[$\bullet$] The sequence $(\mu_n)$ \emph{locally and empirically converges} (or \emph{le-converges} for short)  to $\mu$, denoted by $\mu_n\overset{le}{\longrightarrow}\mu$, if $\mu_n\overset{lw*}{\longrightarrow}\mu$ and for every weak$^*$-neighbourhood $\cO$ of $\mu$ we have
        \[\lim_{n \to \infty} \mu_n(\Omega(\cO,\sigma_n)) = 1.\]
    \end{enumerate} 
 
\end{defn}

\begin{rem}
    Note that $\mu_n\overset{lw*}{\longrightarrow}\mu$ is equivalent to the fact that for all continuous $f:\cX^G \to \C$ and all $\varepsilon>0$ we have that
\[\frac{1}{|V_n|} \left|\left\{v\in V_n:\, \Bigg| \int_{\cX^{V_n}} f((\Pi_v^{\sigma_n})(\rho))d\mu_n(\rho)-\int_{\cX^G}f(\omega)d\mu(\omega) \Bigg| <\varepsilon\right\} \right| \xrightarrow{n\to \infty} 1.\]
The second property in the definition of le-convergence says that
\[\mu_n\left( \left\{\rho\in \cX^{V_n}\,: \, \Bigg| \frac{1}{|V_n|}\sum_{v\in V_n} f(\Pi_v^{\sigma_n}(\rho))-\int_{\cX^G}f(\omega)d\mu(\omega) \Bigg| <\varepsilon \right\} \right)\xrightarrow{n\to \infty} 1.\]
\end{rem}

\begin{defn}[Sofic and residually finite models]
	Let $G$ be a sofic group, $\Sigma=(V_n,\sigma_n)$ a sofic approximation for $G$ and $\mathcal{X}$ a compact metrizable space. Whenever $(G,\mathcal{X},\mu)$ is a $G$-process and there exists a sequence $(\mu_n)_{n\geq 1}$ with $\mu_n\in \mathrm{Prob}(\mathcal{X}^{V_n})$ such that $\mu_n\stackrel{lw*}{\longrightarrow}\mu$ (resp. $\mu_n\stackrel{le}{\longrightarrow}\mu$), we say that $(V_n,\sigma_n,\mu_n)$ is a {\em sofic model} (resp. {\em ergodically sofic model}) for $(G,\mathcal{X},\mu)$. If moreover $(V_n,\sigma_n)$ is a residually finite approximation and each $\mu_n$ is $V_n$-invariant, we call $(V_n,\sigma_n,\mu_n)$ a {\em residually finite model} (resp. {\em ergodically residually finite model}) for $(G,\mathcal{X},\mu)$.
\end{defn}

\medskip

The following assertion is due to Austin.

\begin{prop}[{\cite{Aus16}*{Corollary~5.7}}] \label{prop:leergodic}
Let $\mu \in \mathrm{Prob}(\mathcal{X}^G,G)$ and $(\mu_n)$ be a sequence with $\mu_n \in \mathrm{Prob}(\mathcal{X}^{V_n})$ for a sofic approximation $(V_n,\sigma_n)$. If $\mu$ is ergodic, then 
\[\mu_n\stackrel{lw*}{\longrightarrow} \mu \;\;\;\Longrightarrow\;\;\; \mu_n\stackrel{le}{\longrightarrow} \mu.\]
\end{prop}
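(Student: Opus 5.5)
The plan is to show that, for each continuous test function $f:\cX^G\to\C$, the empirical average $\frac{1}{|V_n|}\sum_{v} f(\Pi_v^{\sigma_n}(\rho))$ concentrates around $\int f\,d\mu$ under $\mu_n$. Since weak$^*$-neighbourhoods of $\mu$ contain finite intersections of sets of the form $\{\nu: |\nu(f_i)-\mu(f_i)|<\varepsilon\}$, with each $f_i$ a local function (depending on finitely many coordinates), it suffices to treat a single local $f$.

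Write $A_n(\rho)=\frac{1}{|V_n|}\sum_v f(\Pi_v^{\sigma_n}(\rho))$. The first moment is controlled directly by lw$^*$-convergence: $\int A_n\,d\mu_n=\frac1{|V_n|}\sum_v \int f\, d(\Pi_v^{\sigma_n})_*\mu_n$, which tends to $\mu(f)$. The key is therefore the second moment, and specifically to show $\int |A_n|^2\,d\mu_n\to|\mu(f)|^2$; Chebyshev then finishes the argument. We expand
\[
\int |A_n|^2 d\mu_n=\frac{1}{|V_n|^2}\sum_{v,w}\int f(\Pi_v\rho)\overline{f(\Pi_w\rho)}\,d\mu_n(\rho).
\]

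For a pair $(v,w)$ with $w=\sigma_n^g(v)$ and $g$ in a fixed finite set $F\subseteq G$, the summand equals $\int f\cdot\overline{f\circ g}\,d(\Pi_v)_*\mu_n$, up to good-vertex errors, and by lw$^*$ this is close to $\int f\,\overline{f\circ g}\,d\mu$ for most $v$. Averaging over $g$ in a large finite set gives a quantity near $|\mu(f)|^2$ by ergodicity (mean ergodic theorem along suitable averaging sets). The difficulty is that the bulk of pairs $(v,w)$ are "far apart" in the sofic graph, and lw$^*$ gives no information about the joint law at distant vertices. This is where I expect the main obstacle: lw$^*$ alone does not control correlations between distant sites. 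Indeed, a mixture $\mu_n=\tfrac12(\mu_n'+\mu_n'')$ of models of two different measures can converge to an ergodic $\mu$ in the local sense while failing empirical concentration, unless ergodicity forces something stronger.

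To handle this I would instead argue by contradiction via the "empirical measure" random variable. Consider the law $\lambda_n=(\rho\mapsto P_\rho^{\sigma_n})_*\mu_n\in\prob(\prob(\cX^G))$ and pass to a weak$^*$ subsequential limit $\lambda$. Three facts then need checking. First, $\lambda$ is supported on $G$-invariant measures, since the soficity (the approximate homomorphism property) makes $P_\rho^{\sigma_n}$ asymptotically invariant uniformly in $\rho$. Second, the barycenter of $\lambda$ equals $\mu$; this is exactly lw$^*$ averaged over $v$. Third, because $\mu$ is ergodic it is an extreme point of the simplex of invariant measures, so any probability measure on invariant measures with barycenter $\mu$ must be $\delta_\mu$. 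Hence $\lambda_n\to\delta_\mu$ along every subsequence, which by the portmanteau theorem gives $\mu_n(\Omega(\cO,\sigma_n))\to1$ for every open $\cO\ni\mu$. This route avoids the second-moment computation entirely. The only delicate point left is the uniform asymptotic invariance of $P_\rho^{\sigma_n}$, which follows from the fact that, for local $f$ and fixed $g$, $f(\Pi_{\sigma^g v}\rho)=(f\circ g)(\Pi_v\rho)$ holds on all but a vanishing fraction of $v$, independently of $\rho$.
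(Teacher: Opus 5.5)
Your final argument is correct, and it is in substance the standard proof behind Austin's Corollary~5.7, which the paper only cites from \cite{Aus16} without giving a proof. Every subsequential limit of the laws of $P^{\sigma_n}_\rho$ under $\mu_n$ is concentrated on invariant measures, by the approximate homomorphism property (uniformly in $\rho$), and has barycenter $\mu$ by lw$^*$-convergence; so extremality of the ergodic $\mu$ forces the limit to be $\delta_\mu$, and the portmanteau theorem gives $\mu_n(\Omega(\cO,\sigma_n))\to 1$.

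The abandoned second-moment sketch can simply be dropped. Your mixture remark is misleading as stated: a mixture of models of two distinct invariant measures lw$^*$-converges to their average, which is not ergodic, so such a mixture never has an ergodic limit.
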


\begin{rem} 
 The above proposition states that in the case of ergodic processes, it is sufficient to find $lw^*$-convergent sequences in order to obtain examples for le-convergence. 
\end{rem}

\section{Periodic measures and local empirical convergence  on sofic groups} \label{sec:PA}

This section is devoted to the construction of measures that locally and empirically converge to a prescribed invariant probability measure, making use of weak-$*$ approximation by periodic measures. The general framework is established in Section~\ref{sec:periodicmeasures}, see Proposition~\ref{prop:periodic_locally_approx}, and investigated in depth for periodically approximable groups in Section~\ref{sec:PAsub}, see Proposition~\ref{prop:EPA_quotient}.

\subsection{Approximation by periodic measures} \label{sec:periodicmeasures}
Let $G$ be a sofic group, and $(\mathcal{X},d)$ be a compact metric space. The next lemma shows that the sets of $G$-processes admitting sofic, ergodically sofic, residually finite or ergodically residually finite models are weak-$*$ closed.

\begin{lem}\label{lemma:stability_under_limits}
	Let $(\mu_n)$ be a sequence in $\mathrm{Prob}(\mathcal{X}^G,G)$ and suppose that $\mu_n\overset{w*}{\to}\mu \in \mathrm{Prob}(\mathcal{X}^G,G)$.
	\begin{enumerate}
		\item[\textup{(i)}] If each $(G,\mathcal{X},\mu_n)$ admits a sofic model (resp. ergodically sofic model), then $(G,\mathcal{X},\mu)$ admits a sofic model (resp. ergodically sofic model).
		\item[\textup{(ii)}] If each $(G,\mathcal{X},\mu_n)$ admits a residually finite model (resp. ergodically residually finite model), then $(G,\mathcal{X},\mu)$ admits a residually finite model (resp. ergodically residually finite model).
	\end{enumerate}
\end{lem}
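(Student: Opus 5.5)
This is a diagonal argument. The key inputs are that $\mathrm{Prob}(\mathcal{X}^G)$ with the weak-$*$ topology is compact and metrizable, and that both lw$^*$- and le-convergence can be phrased through finitely many countable test families.

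\textbf{Setup.} Fix a compatible metric $D$ on $\mathrm{Prob}(\mathcal{X}^G)$, for instance
\[
D(\nu,\nu')=\sum_j 2^{-j}\Big|\int f_j\,d\nu-\int f_j\,d\nu'\Big|
\]
for a dense sequence $(f_j)$ in the unit ball of $C(\mathcal{X}^G)$. Then lw$^*$-convergence $\mu_n\to\mu$ along $(V_n,\sigma_n)$ is equivalent to the following: for every $\delta>0$,
\[
\frac{\big|\{v\in V_n:\ D\big((\Pi_v^{\sigma_n})_*\mu_n,\mu\big)<\delta\}\big|}{|V_n|}\to 1 .
\]
The empirical part of le-convergence is equivalent to $\mu_n\big(\{\rho: D(P^{\sigma_n}_\rho,\mu)<\delta\}\big)\to 1$ for every $\delta>0$. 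Both equivalences hold because $D$-balls form a neighbourhood base.

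\textbf{Diagonal choice.} The models of the different $\mu_k$ may a priori live on different sofic approximations. For part (i) the statement should be read with a fixed sofic approximation $\Sigma=(V_n,\sigma_n)$, as in the definition of sofic model, so every $\mu_k$ has a model $(\mu^{(k)}_n)_n$ over the same $\Sigma$. For each $k$, lw$^*$-convergence lets me choose $N_k$ such that for all $n\ge N_k$, the proportion of vertices $v$ with $D\big((\Pi_v^{\sigma_n})_*\mu^{(k)}_n,\mu_k\big)<1/k$ is at least $1-1/k$. In the ergodic case I also require $\mu^{(k)}_n\big(\{\rho: D(P^{\sigma_n}_\rho,\mu_k)<1/k\}\big)\ge 1-1/k$ for $n\ge N_k$. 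I take $N_1<N_2<\cdots$ strictly increasing. For $N_k\le n<N_{k+1}$ I set $\nu_n:=\mu^{(k)}_n$, and for $n<N_1$ I take $\nu_n$ arbitrary. Let $k(n)$ denote the index used at step $n$; then $k(n)\to\infty$. By the triangle inequality,
\[
D\big((\Pi_v^{\sigma_n})_*\nu_n,\mu\big)<1/k(n)+D(\mu_{k(n)},\mu)
\]
on a set of vertices of proportion at least $1-1/k(n)$. Since $D(\mu_{k(n)},\mu)\to 0$, this gives $\nu_n\overset{lw*}{\longrightarrow}\mu$. The same estimate applied to $P^{\sigma_n}_\rho$ gives the empirical condition, so $\nu_n\overset{le}{\longrightarrow}\mu$ in the ergodic case.

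\textbf{Part (ii).} The same construction works, with two extra checks. First, invariance is automatic, since each $\nu_n$ is one of the $V_n$-invariant measures $\mu^{(k)}_n$. Second, the approximation must remain a single residually finite approximation, so again I fix the chain $(\Gamma_n)$ and require all models to live on it.

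\textbf{Main obstacle.} The real difficulty is the point that the models of the $\mu_k$ need not share the same approximation. If they are given only on different sofic or residually finite approximations, I would try to fix the convention that "admits a model" refers to a fixed $\Sigma$ (respectively a fixed chain $(\Gamma_n)$). Failing that, I would interleave the approximations diagonally, taking $(V_n,\sigma_n):=(V^{(k)}_n,\sigma^{(k)}_n)$ on the blocks $N_k\le n<N_{k+1}$. The result is again a sofic approximation, because each defining condition involves finitely many group elements and holds with error tending to $0$ in $k$. For (ii), however, interleaving can break the nesting $\Gamma_{n+1}\le\Gamma_n$. There I would pass to intersections of the chosen normal subgroups and pull back invariant measures along the quotient maps $G/(\Gamma\cap\Gamma')\to G/\Gamma$, checking that lw$^*$-convergence is preserved under this pullback.
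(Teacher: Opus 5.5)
Your diagonal argument is correct and is exactly the ``straightforward diagonal sequence argument'' that the paper invokes without further detail.

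One remark: the fallback in your last paragraph is the version actually needed. In the paper's application (Proposition~\ref{prop:periodic_locally_approx} feeding Proposition~\ref{prop:EPA_quotient}), the approximations $\hat{V}_n\times G/N$ and the chains $N\cap\ker\hat{\sigma}_n$ depend on the periodic measure through $N$, so the models do not share a fixed $\Sigma$. For part (ii), the pullback along the $G$-equivariant, uniformly $|\Gamma/\Gamma'|$-to-one map $G/\Gamma'\to G/\Gamma$ preserves the laws $(\Pi_v)_*$, the empirical distributions and invariance exactly. You only need to choose one index $n_k$ per $k$ such that $\Gamma^{(k)}_{n_k}$ avoids the first $k$ elements of an enumeration of $G\setminus\{e\}$; then the nested chain $\bigcap_{j\le k}\Gamma^{(j)}_{n_j}$ has trivial intersection.
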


\begin{proof}
	This follows from straightforward diagonal sequence arguments.
\end{proof}

 \begin{defn}[Periodic measure]
A measure $\mu \in \mathrm{Prob}(\mathcal{X}^G,G)$ is {\em periodic} if $\mu$ has finite support, or equivalently if $\supp(\mu)=\bigcup_{i=1}^kG.\omega_i$, where each $\omega_i\in \mathcal{X}^G$ is such that $\mathrm{stab}_G(\omega_i):=\{g\in G:g.\omega_i=\omega_i\}$ is a finite-index subgroup.
 \end{defn}

 By invariance, one immediately sees that for each periodic $\mu$,  there are $k \in \N$ along with $\omega_i \in \mathcal{X}^G$ and $a_i > 0$ for $1 \leq i \leq k$, such that $\sum_{i=1}^k a_i =1$ and 
	\[
	\mu =  \sum_{i=1}^k a_i \frac{1}{|G.\omega_i|} \sum_{\zeta \in G.\omega_i} \delta_\zeta. 
	\]

\begin{prop}\label{prop:periodic_locally_approx}
	Let $G$ be a sofic group and let $\mu\in \mathrm{Prob}(\mathcal{X}^G,G)$ be a periodic measure. 
 \begin{enumerate}
     \item[\textup{(i)}] There is a sofic model $(V_n,\sigma_n,\mu_n)$ for $(G,\mathcal{X},\mu)$ such that $(\Pi_{v}^{\sigma_n})_{*}\mu_n = \mu$ for all $n$ and $v \in V_n$. 

     \item[\textup{(ii)}] If $G$ is residually finite, then the sofic model from \textup{(i)} can be chosen to be a residually finite model.
 \end{enumerate}
\end{prop}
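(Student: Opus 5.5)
The plan is to avoid approximating $\mu$ at all: I will build the sofic approximation so that the periodic structure of $\mu$ is encoded exactly in the finite sets $V_n$. Then every local pushforward is literally equal to $\mu$.

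\emph{Setup.} Write
\[
\mu=\sum_{i=1}^k a_i\,\frac{1}{|G.\omega_i|}\sum_{\zeta\in G.\omega_i}\delta_\zeta ,
\]
with each $H_i:=\mathrm{stab}_G(\omega_i)$ of finite index. Let $N$ be the normal core of $\bigcap_i H_i$; it is a finite-index normal subgroup of $G$. Since $n.\omega_i=\omega_i$ for $n\in N$, we get $(\omega_i)_{gn}=(\omega_i)_g$. By normality of $N$, this means $\omega_i$ is really a function $\bar\omega_i$ on $G/N$ with $(\omega_i)_g=\bar\omega_i(gN)$.

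\emph{Construction for (i).} Fix any sofic approximation $(V'_n,\sigma'_n)$ of $G$. Put $V_n:=V'_n\times G/N$ and $\sigma_n^g(v,xN):=(\sigma_n'^g(v),gxN)$.
\begin{itemize}
\item The homomorphism defect of $\sigma_n$ occurs only in the first coordinate, so its proportion is the same as for $\sigma'_n$.
\item If $\sigma_n'^g(v)\neq v$ then $\sigma_n^g(v,xN)\neq(v,xN)$, so freeness is inherited.
\end{itemize}
Hence $(V_n,\sigma_n)$ is again a sofic approximation.

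For $i\le k$ and $t\in G$ define $\rho^{i,t}\in\mathcal{X}^{V_n}$ by $\rho^{i,t}_{(v,xN)}:=\bar\omega_i(xtN)$. This is well defined because $N$ is normal. Let $\mu_n$ be the law of $\rho^{i,t}$ when $i$ is chosen with probability $a_i$ and $tN$ is chosen uniformly in $G/N$.

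The key computation is
\[
\Pi^{\sigma_n}_{(v,g'N)}(\rho^{i,t})=\big(\bar\omega_i(gg'tN)\big)_{g\in G}=(g't).\omega_i .
\]
This does not depend on $v$. As $tN$ is uniform, so is $g'tN$.

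The map $G/N\to G.\omega_i$, $sN\mapsto s.\omega_i$, is a surjection whose fibres are cosets of $H_i/N$, so all fibres have equal size. The uniform measure on $G/N$ therefore pushes forward to the uniform measure on the orbit $G.\omega_i$. Averaging over $i$ gives $(\Pi^{\sigma_n}_{w})_*\mu_n=\mu$ for every $w\in V_n$ and every $n$, and in particular $\mu_n\overset{lw*}{\longrightarrow}\mu$.

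\emph{Construction for (ii).} If $G$ is residually finite, take a decreasing chain $(\Gamma_n)$ of finite-index normal subgroups with trivial intersection. Replace it by $\Gamma'_n:=\Gamma_n\cap N$, which is still decreasing, normal, of finite index, and has trivial intersection. Take $V_n:=G/\Gamma'_n$ with the left action, and set $\rho^{i,t}_{x\Gamma'_n}:=\bar\omega_i(xtN)$, which is well defined since $\Gamma'_n\le N$.

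The same computation gives exact pushforwards equal to $\mu$. Because $tN$ is drawn uniformly, $\mu_n$ is invariant under the finite group $G/\Gamma'_n$ acting on $\mathcal{X}^{V_n}$: translating a configuration just shifts $t$.

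\emph{Main obstacle.} The only delicate point is bookkeeping rather than analysis. I need to check that the conventions of the right shift action $h.\omega=(\omega_{gh})_g$ and of $\Pi_v^\sigma$ match, so that right multiplication by $t$ and left multiplication by $g$ compose correctly. This is exactly where the normality of $N$, rather than merely $N\le H_i$, is used.
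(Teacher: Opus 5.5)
Your proof is correct and follows essentially the same route as the paper: the product $\hat V_n\times G/N$ with $\sigma_n^g(v,hN)=(\hat\sigma_n^g(v),ghN)$ (respectively $G/(\Gamma_n\cap N)$ in the residually finite case), configurations $x\mapsto\omega(xt)$, and the identity $\Pi^{\sigma_n}_{(v,hN)}(\rho^t)=ht.\omega$ giving exact pushforwards; in (ii) your $\mu_n$ coincides with the paper's uniform measure on the $G/N_n$-orbit of $\rho_n$. The only difference is cosmetic. You average $t$ over all of $G/N$ and use that the fibres of $sN\mapsto s.\omega_i$ have equal size, whereas the paper first reduces to a single orbit and then averages over coset representatives via Lemma~\ref{lemma:fundamental_domain}.
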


For the proof of the proposition we will also need the following basic lemma.

\begin{lem}\label{lemma:fundamental_domain}
	Let $N\leq H\leq G$. Choose sets $C$ and $D$ of representatives of left cosets of $H$ in $G$ and of $N$ in $H$, respectively. Then,
	\begin{enumerate}
		\item[\textup{(i)}] $CD$ contains exactly one element from each coset of $N$ in $G$,
		\item[\textup{(ii)}] $CD=\bigsqcup_{c\in C}cD=\bigsqcup_{d\in D}Cd$, and in particular $|CD|=|C|\cdot|D|$.
	\end{enumerate}
\end{lem}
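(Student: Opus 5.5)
The plan is to prove Lemma~\ref{lemma:fundamental_domain} directly from the definition of left cosets, using only two facts: the left cosets of $H$ partition $G$, and the left cosets of $N$ inside $H$ partition $H$. No earlier result of the paper is needed.

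\textbf{Existence in (i).} Let $g\in G$. Since $C$ is a set of left coset representatives of $H$ in $G$, there is $c\in C$ with $gH=cH$, so $c^{-1}g\in H$. Since $D$ represents the left cosets of $N$ in $H$, there is $d\in D$ with $c^{-1}gN=dN$. Hence $gN=cdN$, so $CD$ meets every left coset of $N$.

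\textbf{Uniqueness in (i).} Suppose $cdN=c'd'N$ with $c,c'\in C$ and $d,d'\in D$. We have $cdN\subseteq cH$ because $dN\subseteq H$; likewise $c'd'N\subseteq c'H$. So $cH\cap c'H\neq\emptyset$, which forces $cH=c'H$ and then $c=c'$. Cancelling $c$ gives $dN=d'N$, and therefore $d=d'$. This shows that the multiplication map $C\times D\to CD$, $(c,d)\mapsto cd$, is injective, and even that distinct pairs land in distinct $N$-cosets. Hence $CD$ contains exactly one element from each left coset of $N$.

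\textbf{Part (ii).} It follows from the injectivity just proved. Writing $CD=\bigcup_{c}cD=\bigcup_{d}Cd$, the sets $cD$ for different $c$ are disjoint, since a common element would give two distinct pairs with the same product. The same argument shows that the sets $Cd$ for different $d$ are disjoint. Each $cD$ has $|D|$ elements, so $|CD|=|C|\cdot|D|$.

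The only point requiring any care is the uniqueness step: one must use that $D\subseteq H$ in order to place $cdN$ inside $cH$. Beyond that the argument is routine.
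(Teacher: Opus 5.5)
Your proof is correct and follows essentially the same route as the paper: existence via $g\in cH$ and then $c^{-1}g\in dN$, uniqueness via $dN\subseteq H$ forcing $cH\cap c'H\neq\emptyset$, and (ii) from the resulting injectivity of $(c,d)\mapsto cd$. The paper's argument for the disjointness of the sets $cD$ (and analogously $Cd$) in (ii) is the same check you give.
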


\begin{proof}
	First, note that if $g\in G$, then $g= ch$ for some $c\in C$ and $h\in H$. Moreover, there exist $d\in D$ and $t\in N$ such that $h=dt$, so $g=cdt\in cdN$. Now, if $c,c'\in C$ and $d,d'\in D$ are such that $cdN=c'd'N$, since $dN$ and $d'N$ are subsets of $H$ we have that $cH\cap c'H\neq \varnothing$, which yields $c=c'$. This directly implies that $d=d'$, so $cd=c'd'$. Thus, we have proved (i).
	
	To see that (ii) is true, suppose $cD\cap c'D\neq \emptyset$. We can then find $d,d'\in D$ with $cd=c'd'$, so that $cdN=c'd'N$ and, again, $c=c'$. The other decomposition is analogous.
\end{proof}

\begin{proof}[Proof of Proposition~\ref{prop:periodic_locally_approx}]
Fix an arbitrary sofic approximation $\hat{\Sigma}=(\hat{V}_n,\hat{\sigma}_n)$, and let $\Omega\subseteq \mathcal{X}^G$ be finite such that $\mathrm{supp}(\mu)=\bigsqcup_{\omega\in \Omega}G.\omega$. The subgroup $\bigcap_{\omega\in \Omega}\mathrm{stab}_G(\omega)\leq G$ is a finite intersection of finite-index subgroups of $G$, so it is of finite index as well. Let $N\leq \bigcap_{\omega\in \Omega}\mathrm{stab}_G(\omega)$ be a normal subgroup of $G$ of finite index.

	We proceed to prove (i). Define, for each $n\geq 1$, the map $\sigma_n\colon G\to \mathrm{Sym}(\hat{V}_n\times G/N)$ given by
	$$\sigma_n^g(v,hN)=(\hat{\sigma}_n^{g}(v),ghN),$$
	for $g,h\in G$ and $v\in \hat{V}_n$. Notice that if $g\in G\setminus N$, then $gNhN\neq hN$, so $\sigma_n^g(v,hN)\neq (v,hN)$ for all $h\in G$. If on the other hand $g\in N\setminus\{e\}$, then
	\begin{align*}
		\lim_{n\to\infty}\frac{1}{|\hat{V}_n\times G/N|}|\{(v,hN)\in \hat{V}_n\times G/N:\sigma_n^g(v,hN)\neq(v,hN)\}|&\\
		=\lim_{n\to\infty}\frac{|G/N|}{|\hat{V}_n||G/N|}|\{v\in \hat{V}_n:\hat{\sigma}_n^{g}(v)\neq v\}|=1&.
	\end{align*}
	Hence, $\Sigma:=(\hat{V}_n\times G/N,\sigma_n)$ is asymptotically free. Moreover,
	$$\sigma_n^{gg'}(v,hN)=[\sigma_n^g\circ \sigma_n^{g'}](v,hN)\iff \hat{\sigma}_n^{gg'}(v)=[\hat{\sigma}_n^{g}\circ \hat{\sigma}_n^{g'}](v),$$
	for all $g,g',h\in G$ and $v\in \hat{V}_n$, from where it follows that 
	\begin{align*}
		\lim_{n\to\infty}\frac{|\{(v,hN)\in \hat{V}_n\times G/N:\sigma_n^{gg'}(v,hN)=[\sigma_n^g\circ \sigma_n^{g'}](v,hN)\}|}{|\hat{V}_n\times G/N|}&\\
		=\lim_{n\to\infty}\frac{|G/N|}{|\hat{V}_n||G/N|}|\{v\in \hat{V}_n:\hat{\sigma}_n^{gg'}(v)=\hat{\sigma}_n^{g}\circ \hat{\sigma}_n^{g'}(v)\}|=1&.
	\end{align*}
	Thus, $\Sigma$ is a sofic approximation.

    Now, if for each $\omega\in \Omega$ there is $\mu_n^\omega\in \mathrm{Prob}(\mathcal{X}^{\hat{V}_n\times G/N})$ such that $\mu_n^\omega$ locally weak-$*$ converges to the uniform measure on
    $G.\omega$, it is clear that the measure $\sum_{\omega\in\Omega}\mu(G.\omega)\mu_n^\omega$ will locally weak-$*$ converge to $\mu$. We can thus assume that $\Omega=\{\omega\}$ and $\mu$ is the uniform measure on $G.\omega$.
 
	Choose a set $C\subseteq G$ of representatives of the left cosets of $\mathrm{stab}_G(\omega)$ in $G$, and let $D\subseteq \mathrm{stab}_G(\omega)$ be a set of representatives of the left cosets of $N$ in $\mathrm{stab}_G(\omega)$. Then $CD$ is a fundamental domain for $N$ in $G$ by Lemma \ref{lemma:fundamental_domain}. Fix now $n\geq 1$. Given $t\in CD$, let $\rho^{t}_n\in \mathcal{X}^{\hat{V}_n\times G/N}$ be the configuration satisfying $\rho^{t}_n(v,hN)= \omega(ht)$ for all $h\in G$ and all $v\in \hat{V}_n$. This is well-defined, since if $hN=h'N$ then $t^{-1}h^{-1}h't\in N\subseteq \mathrm{stab}_G(\omega)$, so $\omega(ht)=(ht.\omega)(e)=(h't.\omega)(e)=\omega(h't)$. Further, for $t\in CD$, $g,h\in G$ and $v\in \hat{V}_n$,
	$$\left[\Pi^{\sigma_n}_{(v,hN)}(\rho^{t}_n)\right](g)=\rho^{t}_n(\sigma_n^g(v,hN))=\rho^{t}_n(\hat{\sigma}_n^{g}(v),ghN)=\omega(ght)=(ht.\omega)(g),$$
	so $\Pi^{\sigma_n}_{(v,hN)}(\rho^{t}_n)=ht.\omega$. Since $D\subseteq \mathrm{stab}_G(\omega)$, we have $cd.\omega=cd'.\omega$ for all $c\in C$, $d,d'\in D$, so $\rho^{cd}_n=\rho^{cd'}_n$ and it follows that
	$$\mu_n:=\frac{1}{|C D|}\sum_{t\in C D}\delta_{\rho^{t}_n}=\frac{1}{|C|}\sum_{c\in C}\delta_{\rho^{c}_n},$$
	where we assume without loss of generality that $e\in D$. With this, we have for all $h\in G$ and $v\in \hat{V}_n$ that
	\[\left[\Pi^{\sigma_n}_{(v,hN)}\right]_*\mu_n=\frac{1}{|C|}\sum_{c\in C}\delta_{\Pi^{\sigma_n}_{(v,hN)}(\rho^{c}_n)}=\frac{1}{|C|}\sum_{c\in C}\delta_{hc.\omega}=\frac{1}{|G.\omega|}\sum_{\zeta\in G.\omega}\delta_{\zeta}=\mu.\]
	This implies the assertion~(i), as desired.
 
 Now, we prove (ii). If $G$ is residually finite, we can take $\hat{\Sigma}=(V_k,\hat{\sigma}_k)$ to be a residually finite approximation, so that $\hat{V}_n=G/\operatorname{ker}(\hat{\sigma}_n)$, and define $N_n:= N\cap \operatorname{ker}(\hat{\sigma}_n)$ for every $n\geq 1$. Letting $\sigma_n$ to be the homomorphism $G\to\operatorname{Sym}(G/N_n)$ associated to the quotient map $\pi_n\colon G\to G/N_n$, it is immediate that the sequence $(G/N_n,\sigma_n)$ is a residually finite approximation. Again, now that we have fixed the sofic approximation it suffices to consider the case where $\Omega=\{\omega\}$ and $\mu$ is the uniform measure on $G.\omega$.

 Fix $n\geq 1$. Define $\rho_n\in \mathcal{X}^{G/N_n}$ by $\rho_n(gN_n)=\omega(g)$ (this is well-defined, since $N_n\leq N\leq \mathrm{stab}_G(\omega)$). Note that for all $v \in G/N_n$, the map $\Pi_v^{\sigma_n}$ defines a bijection $(G/N_n).\rho_n\rightarrow G.\omega$. Indeed, if $g\in G$, a preimage of $g.\omega$ under $\Pi_v^{\sigma_n}$ is $(h_v^{-1}gN_n).\rho_n$, where $h_v\in G$ is such that $\pi_n(h_v)=v$. Hence, the map is surjective. Moreover, for $t,g,h\in G$, we have
$$\Pi_{gN_n}^{\sigma_n}(hN_n.\rho_n)(t)=(hN_n.\rho_n)(tgN_n)=\rho_n(tghN_n)=\omega(tgh)=(gh.\omega)(t),$$
	so $\Pi_{gN_n}^{\sigma_n}(hN_n.\rho_n)=gh.\omega$ for each $\omega\in\Omega$. With this, if $gh_1.\omega=gh_2.\omega$ and $w_i:=h_iN_n$ for $i=1,2$, then $h_1.\omega=h_2.\omega$, which in turn implies, for all $t\in G$,
	$$(w_1.\rho_n)(tN_n)=\rho_n(th_1N_n)=\omega(th_1)=\omega(th_2)=\rho_n(th_2N_n)=(w_2.\rho_n)(tN_n).$$
	Thus, $w_1.\rho^\omega_n=w_2.\rho_n$, so that $\Pi_{gN_n}^{\sigma_n}$ defines an injective map. We conclude that this map is bijective, as claimed.
	
	Consider $\mu_n\in \mathrm{Prob}(\mathcal{X}^{G/N_n},G/N_n)$ to be the uniform measure supported over the orbit $(G/N).\rho_n$.
	Then $(\Pi_v^{\sigma_n})_*\mu_n$ is the uniform measure upon $G.\omega$, since for any $g\in G$ we have
	$$\mu_n((\Pi_v^{\sigma_n})^{-1}(\{g.\omega\}))=\frac{1}{|(G/N_n).\rho_n|}=\frac{1}{|G.\omega|}.$$
	This yields (ii).
\end{proof}

\subsection{Local and empirical convergence for PA groups or quotients} \label{sec:PAsub}

	Sequences that are le-conver\-gent can be found for every ergodic action of a PA group, as will be demonstrated in this section.
	
	\medskip

	\begin{defn}[PA groups and EPA groups]
		A countable group $G$ is  {\em periodically approximable} or is said to have {\em property PA} (and $G$ is called a {\em PA group} for short) if for every finite set $\mathcal{A}$, the set of periodic $G$-invariant probability measures is weak-$*$-dense in $\mathrm{Prob}(\mathcal{A}^G,G)$. \\ The group $G$ is {\em ergodically periodically approximable} or is said to have {\em property EPA} (and $G$ is called an {\em EPA group} for short) if for every finite set $\mathcal{A}$, the set of ergodic periodic $G$-invariant probability measures is weak-$*$-dense in $\mathrm{Prob}(\mathcal{A}^G,G)$.  
	\end{defn} 


As mentioned above, the property PA was  introduced by  Bowen in \cite{Bow03}, where it was proved that finitely generated free groups have this property. In \cite{Kec12}, Kechris introduced the property MD, which is equivalent to the property PA (see \cite{BK20}*{p. 2698}). 
It is known that the following groups have the property PA: free groups $\mathbb{F}_r$ \cite{Kec12}*{p. 486}, amenable residually finite groups \cite{ren2018}, free products of groups that are either finite and non-trivial or have property PA \cite{tucker2015weak}*{Theorem 4.8}, subgroups of groups with property PA, groups containing a finite-index subgroup with the property PA \cite{Kec12}*{p. 486}, groups of the form $H\rtimes \mathbb{F}_r$ with $H$ amenable and residually finite \cite{BK20}*{p. 2699}. On the other hand, examples of groups without property 
PA include for instance all non-residually finite groups \cite{Bow03}*{Section~7.2.}, as well as $\mathrm{SL}_n(\Z)$ for $n>2$  \cite{Kec12}*{p. 466}, and $\mathbb{F}_2\times \mathbb{F}_2$ \cite{BK20}*{p. 2699}.

\medskip

We denote by $R(G)$ the {\em residual subgroup of} $G$, that is, the intersection of all finite-index normal subgroups of $G$. Given a group homomorphism $\pi\colon G\to Q$, define the map $\Phi^{\pi}\colon \mathcal{X}^Q\to \mathcal{X}^G$ by $\Phi^{\pi}(\omega)= \omega \circ \pi$ for $\omega \in \mathcal{X}^{Q}$.

\begin{prop}\label{prop:EPA_quotient}
    Let $\mathcal{A}$ be a finite set, and let $\mathcal{P}(\mathcal{A}^G)$ and $\mathcal{P}_e(\mathcal{A}^G)$ be the sets of periodic and periodic ergodic measures in $\mathrm{Prob}(\mathcal{A}^G,G)$, respectively. Let $\pi\colon G\to Q$ be a surjective group homomorphism. 
    \begin{enumerate}
        \item[\textup{(i)}] If $Q$ has property PA, then $\Phi^{\pi}_*(\mathrm{Prob}(\mathcal{A}^{Q},Q))\subseteq\overline{\mathcal{P}(\mathcal{A}^G)}$. In particular, any $G$-process $(G,\mathcal{A},\mu)$ with $\mu\in\Phi^{\pi}_*(\mathrm{Prob}(\mathcal{A}^{Q},Q))$ admits a sofic model.
        \item[\textup{(ii)}] If $Q$ has property EPA, then $\Phi^{\pi}_*(\mathrm{Prob}(\mathcal{A}^{Q},Q))\subseteq\overline{\mathcal{P}_e(\mathcal{A}^G)}$. In particular, every $G$-process $(G,\mathcal{A},\mu)$ with $\mu\in\Phi^{\pi}_*(\mathrm{Prob}(\mathcal{A}^{Q},Q))$ admits an ergodically sofic model.
    \end{enumerate}
    The inclusions from (i) and (ii) are equalities if moreover $Q$ is isomorphic to $G/R(G)$. 
\end{prop}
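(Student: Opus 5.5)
The argument has three parts: push periodic approximants forward along $\Phi^{\pi}$, use the residually finite quotient for the reverse inclusion, and invoke the earlier results for the model statements.

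\textbf{The pushforward $\Phi^{\pi}_*$ preserves the relevant properties.} The map $\Phi^{\pi}$ is continuous, so $\Phi^{\pi}_*$ is weak-$*$ continuous. It is also equivariant: $(g.(\omega\circ\pi))(h)=\omega(\pi(hg))=(\pi(g).\omega)(\pi(h))$, so $\Phi^{\pi}(\pi(g).\omega)=g.\Phi^{\pi}(\omega)$. Hence $\Phi^{\pi}_*$ maps $Q$-invariant measures to $G$-invariant measures. Periodicity is preserved as well. If $\mathrm{stab}_Q(\omega)$ has finite index, then $\pi^{-1}(\mathrm{stab}_Q(\omega))\subseteq \mathrm{stab}_G(\omega\circ\pi)$, so the latter has finite index. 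Equivalently, finite support is mapped to finite support. Ergodicity is preserved because $\Phi^{\pi}_*\nu$ is a factor of $\nu$ via the equivariant map: a $G$-invariant set $B$ pulls back to $(\Phi^{\pi})^{-1}(B)$, which is $Q$-invariant mod null sets since $\pi$ is surjective.

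\textbf{The inclusions in (i) and (ii).} Take $\nu\in\mathrm{Prob}(\mathcal{A}^Q,Q)$. By PA (resp.\ EPA) for $Q$ there are periodic (resp.\ periodic ergodic) $\nu_k\to\nu$. Then $\Phi^{\pi}_*\nu_k\to\Phi^{\pi}_*\nu$ by continuity, and each $\Phi^{\pi}_*\nu_k$ lies in $\mathcal{P}(\mathcal{A}^G)$ (resp.\ $\mathcal{P}_e(\mathcal{A}^G)$) by the first part.

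\textbf{The model statements.} Periodic measures admit sofic models by Proposition~\ref{prop:periodic_locally_approx}(i), and the set of measures admitting sofic models is weak-$*$ closed by Lemma~\ref{lemma:stability_under_limits}(i). This gives the sofic model in (i). For (ii), a periodic ergodic measure has a sofic model, which is an ergodically sofic model by Proposition~\ref{prop:leergodic}. Closedness under limits (Lemma~\ref{lemma:stability_under_limits}(i), ergodic version) then finishes the argument.

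\textbf{Equality when $Q\cong G/R(G)$.} This is the main step. We may take $\pi$ to be the quotient map $G\to G/R(G)$. The key claim is that every periodic $\mu$ on $\mathcal{A}^G$ is supported on configurations constant on cosets. Suppose $\omega$ has finite-index stabiliser $H$. Then $H$ contains its normal core $N$, which is normal of finite index, so $R(G)\leq N\leq H$. For $r\in R(G)$ and $g\in G$ we have $\omega(rg)=\omega(g r')$ with $r'=g^{-1}rg\in R(G)\subseteq\mathrm{stab}_G(\omega)$, which gives $\omega(rg)=(r'.\omega)(g)=\omega(g)$. So $\omega$ is constant on cosets $R(G)g$, and since $R(G)$ is normal these coincide with the left cosets $gR(G)$. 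Therefore $\omega=\tilde\omega\circ\pi$ for a unique $\tilde\omega\in\mathcal{A}^Q$.

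It follows that the closed set $Y:=\Phi^{\pi}(\mathcal{A}^Q)$ supports every periodic measure. Since $\Phi^{\pi}$ is a continuous injection from a compact space, it is a homeomorphism onto $Y$. Measures supported on the closed set $Y$ form a weak-$*$ closed set, so any $\mu\in\overline{\mathcal{P}(\mathcal{A}^G)}$ is supported on $Y$, and hence $\mu=\Phi^{\pi}_*\nu$ with $\nu:=((\Phi^{\pi})^{-1})_*\mu$. The measure $\nu$ is $Q$-invariant by the equivariance above and surjectivity of $\pi$. This proves equality in (i); the ergodic case follows because $\mathcal{P}_e\subseteq\mathcal{P}$ and the reverse inclusion from (ii).

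The step requiring the most care is this coset-constancy argument, in particular the use of normality of $R(G)$ together with the convention that the shift acts by right translation.
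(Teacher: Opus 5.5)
Your proof is correct. For the two inclusions and the model statements you follow the paper's argument: approximate by periodic (resp.\ periodic ergodic) measures on $\mathcal{A}^Q$, push them forward, and invoke Proposition~\ref{prop:periodic_locally_approx}, Lemma~\ref{lemma:stability_under_limits} and, in the ergodic case, Proposition~\ref{prop:leergodic}. Your checks that $\Phi^{\pi}_*$ preserves periodicity (finite support goes to finite support) and ergodicity (factor argument) are shorter versions of the paper's support computation.

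For the equality you take a genuinely different route. The paper fixes a periodic $\mu$ and chooses a finite-index normal $N\le\bigcap_i\mathrm{stab}_G(\omega_i)$. It builds an explicit measure $\tilde\mu$ on $\mathcal{A}^{G/N}$ and pushes it to $\mathcal{A}^Q$ through $\gamma\colon Q\to G/N$. This shows $\mathcal{P}(\mathcal{A}^G)\subseteq\Phi^{\pi}_*(\mathrm{Prob}(\mathcal{A}^Q,Q))$, and the paper then closes up using that this image is compact. You instead use $R(G)\le\mathrm{stab}_G(\omega)$ to place every periodic configuration in the closed subshift $Y=\Phi^{\pi}(\mathcal{A}^Q)$. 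Since $\Phi^{\pi}$ is an equivariant homeomorphism onto $Y$, you identify the image of $\Phi^{\pi}_*$ with the $G$-invariant measures concentrated on $Y$, which form a weak-$*$ closed set by the Portmanteau theorem.

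Your version avoids the orbit-size bookkeeping with the $\tilde\omega_i$. It also gives a more transparent description of $\Phi^{\pi}_*(\mathrm{Prob}(\mathcal{A}^Q,Q))$ that holds without any PA hypothesis. The paper's construction stays entirely within finite quotients and never needs to invert $\Phi^{\pi}$ on its image.

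Both arguments rest on the same input $R(G)\le N$. Both also tacitly use $\ker\pi=R(G)$ when passing to the canonical quotient map. That is the intended reading of the hypothesis, and it in fact follows from $Q\cong G/R(G)$: residual finiteness of $Q$ gives $\ker\pi\supseteq R(G)$, and finitely generated residually finite groups are Hopfian.
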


The following standard lemma will be needed for the proof of the proposition. 

\begin{lem}\label{lemma:push-forward}
    For a group homomorphism $\pi\colon G\to Q$, we have the following.
    \begin{enumerate}
        \item[\textup{(i)}] The map $\Phi^\pi$ is  continuous, and $\Phi^\pi(\pi(g).\omega)=g.\Phi^\pi(\omega)$ for all $\omega\in \mathcal{X}^Q$, $g\in G$.
        \item[\textup{(ii)}] The push-forward map $\Phi^\pi_*\colon \mathrm{Prob}(\mathcal{X}^Q)\to \mathrm{Prob}(\mathcal{X}^G)$ is weak-* continuous and sends \\ 
        $Q$-invariant measures to $G$-invariant measures.
         \item[\textup{(iii)}] If $\pi$ is additionally surjective, then the maps $\Phi^\pi$ and $\Phi^\pi_*$ are injective. 
        \end{enumerate}
\end{lem}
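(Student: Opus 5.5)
The plan is to verify the three items of Lemma~\ref{lemma:push-forward} one at a time, using the product topology and the fact that the cylinder functions depending on finitely many coordinates are dense in $C(\mathcal{X}^G)$.

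\emph{(i).} Continuity of $\Phi^\pi$ into the product space $\mathcal{X}^G$ reduces to continuity of each coordinate map $\omega\mapsto \Phi^\pi(\omega)(g)=\omega(\pi(g))$. Each such map is a coordinate projection on $\mathcal{X}^Q$, so it is continuous. For the equivariance, fix $h\in G$ and compute with the right-translation convention:
\[
\Phi^\pi(\pi(g).\omega)(h)=(\pi(g).\omega)(\pi(h))=\omega(\pi(h)\pi(g))=\omega(\pi(hg))=(g.\Phi^\pi(\omega))(h).
\]
The only input here is that $\pi$ is a homomorphism.

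\emph{(ii).} For weak-$*$ continuity, note that for $f\in C(\mathcal{X}^G)$ one has $\int f\,d\Phi^\pi_*\nu=\int f\circ\Phi^\pi\,d\nu$. By (i), $f\circ\Phi^\pi$ is continuous on $\mathcal{X}^Q$, so $\nu_k\to\nu$ weak-$*$ implies $\Phi^\pi_*\nu_k\to\Phi^\pi_*\nu$ weak-$*$.

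For invariance, take $\nu$ $Q$-invariant, a Borel set $B$, and $g\in G$. By (i), $(\Phi^\pi)^{-1}(g^{-1}.B)=\pi(g)^{-1}.(\Phi^\pi)^{-1}(B)$. Hence $\Phi^\pi_*\nu(g.B)=\Phi^\pi_*\nu(B)$, which is the required $G$-invariance. It is cleaner to phrase this with preimages under the homeomorphisms $g.$ and $\pi(g).$, so I will write it that way.

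\emph{(iii).} Injectivity of $\Phi^\pi$ is immediate. If $\omega\circ\pi=\omega'\circ\pi$, then surjectivity of $\pi$ gives $\omega=\omega'$.

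Injectivity of $\Phi^\pi_*$ is the main step. Since $\mathcal{X}^Q$ is compact and $\Phi^\pi$ is continuous and injective, $\Phi^\pi$ is a homeomorphism onto its image $K:=\Phi^\pi(\mathcal{X}^Q)$. The set $K$ is compact, hence closed and Borel. Every Borel set $A\subseteq\mathcal{X}^Q$ is then of the form $(\Phi^\pi)^{-1}(B)$ for some Borel $B\subseteq\mathcal{X}^G$: take $B=\Phi^\pi(A)$, which is Borel in $K$ and therefore in $\mathcal{X}^G$. Consequently $\nu(A)=\Phi^\pi_*\nu(\Phi^\pi(A))$, so $\nu$ is recovered from $\Phi^\pi_*\nu$, which proves injectivity.

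The expected obstacle is this measurability point. To avoid it, I could instead argue with functions. Any continuous function $h$ on $\mathcal{X}^Q$ equals $\tilde h\circ\Phi^\pi$ for some continuous $\tilde h$ on $\mathcal{X}^G$, by the Tietze extension theorem applied to $h\circ(\Phi^\pi)^{-1}$ on the closed set $K$. Then $\int h\,d\nu=\int\tilde h\,d\Phi^\pi_*\nu$, so the integrals of all continuous functions, and hence $\nu$, are determined by $\Phi^\pi_*\nu$.
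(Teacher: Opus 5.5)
Your proposal is correct and complete. The paper gives no proof to compare against: it calls the lemma standard and leaves it to the reader, so your argument fills that gap.

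Parts (i) and (ii) are the expected routine checks:
\begin{itemize}
\item coordinatewise continuity of $\Phi^\pi$;
\item the equivariance computation under the convention $(h.\omega)(g)=\omega(gh)$;
\item the preimage identity $(\Phi^\pi)^{-1}(g^{-1}.B)=\pi(g)^{-1}.(\Phi^\pi)^{-1}(B)$.
\end{itemize}
For (iii), your Borel-image argument is already rigorous. The set $K=\Phi^\pi(\mathcal{X}^Q)$ is compact, hence closed. So Borel subsets of $K$ are Borel in $\mathcal{X}^G$, and the Tietze fallback is not needed.

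There is a shorter route for (iii) if you want one. Choose any set-theoretic section $s\colon Q\to G$ with $\pi\circ s=\mathrm{id}_Q$, and define $\Psi\colon\mathcal{X}^G\to\mathcal{X}^Q$ by $\Psi(\eta)=\eta\circ s$. Then $\Psi$ is continuous and $\Psi\circ\Phi^\pi=\mathrm{id}$. It follows that $\Psi_*\circ\Phi^\pi_*=\mathrm{id}$. This gives injectivity of both $\Phi^\pi$ and $\Phi^\pi_*$ at once, with no compactness or measurability argument.
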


\begin{proof}
  This is standard and left for the reader to prove. 
\end{proof}

\begin{proof}[Proof of Proposition~\ref{prop:EPA_quotient}]
    We first show the inclusions from (i) and (ii). Let $\mu\in\mathrm{Prob}(\mathcal{A}^{Q},Q)$. Since $Q$ is a   PA group, there is a sequence $(\mu_n)_n$ in $\mathrm{Prob}(\mathcal{A}^{Q},Q)$ of periodic measures weak-* converging to $\mu$, so by continuity of $\Phi^{\pi}_*$ we get $\Phi^{\pi}_*\mu_n\to\Phi^{\pi}_*\mu$.  We need to check that every $\Phi^{\pi}_*\mu_n$ is periodic.  
 
 If $\omega\in \mathrm{supp}(\mu_n)$ and $U$ is an open neighborhood of $\Phi^{\pi}(\omega)$, then $\omega\in (\Phi^{\pi})^{-1}(U)$, which implies $\Phi^{\pi}_*\mu_n(U)=\mu_n((\Phi^{\pi})^{-1}(U))>0$, so $\Phi^{\pi}(\omega)\in \mathrm{supp}(\Phi^{\pi}_*\mu_n)$. This shows that $\Phi^{\pi}(\mathrm{supp}(\mu_n))$ is a subset of $\mathrm{supp}(\Phi^{\pi}_*\mu_n)$. Conversely, if $y\in \mathrm{supp}(\Phi^{\pi}_*\mu_n)$, then $\mu_n((\Phi^{\pi})^{-1}B(y,1/k))>0$ for every $k >1$. Since $\mathrm{supp}(\mu_n)$ is finite, there must exist some $\omega\in \mathrm{supp}(\mu_n)$ such that $\omega\in (\Phi^{\pi})^{-1}(B(y,1/k))$ for infinitely many $k$, which yields $\Phi^{\pi}(\omega)=y$ and hence $\mathrm{supp}(\Phi^{\pi}_*\mu_n)=\Phi^{\pi}(\mathrm{supp}(\mu_n))$. We can now write $\mathrm{supp}(\mu_n)=\bigsqcup_{i=1}^kQ.\omega_i$ for some elements $\omega_i\in \mathcal{A}^Q$ with $|Q.\omega_i|<\infty$. We obtain
 $$\mathrm{supp}(\Phi^{\pi}_*\mu_n)=\Phi^\pi\left(\bigsqcup_{i=1}^kQ.\omega_i\right)=\bigsqcup_{i=1}^kG.\Phi^{\pi}(\omega_i),$$
 where we use the fact that $\Phi^{\pi}(\pi(g).\omega_i)=g\cdot\Phi^{\pi}(\omega_i)$ from part (i) of Lemma \ref{lemma:push-forward}. Note that the second equality above is justified by surjectivity of $\pi$ in combination with part~(iii) of Lemma~\ref{lemma:push-forward}. 
 We conclude that $\Phi^{\pi}_*\mu_n$ is periodic. Note that if $k=1$ (so that each $\mu_n$ is ergodic), then $\mu$ is ergodic as well. This proves the inclusions from (i) and (ii). 
 
 Both ``in particular'' statements follow directly from Proposition \ref{prop:periodic_locally_approx} together with Lemma \ref{lemma:stability_under_limits}. Hence (i) and (ii) are proven.

 Now, we prove the converse inclusions under the additional assumption that $Q\simeq G/{R}(G)$. For this, it suffices to show that $\mathcal{P}(\mathcal{A}^G)$ is a subset of $\Phi^{\pi}_*(\mathrm{Prob}(\mathcal{A}^{Q},Q))$ when $G$ has property PA, since $\Phi^{\pi}_*(\mathrm{Prob}(\mathcal{A}^{Q},Q))$ is closed by compactness and continuity of $\Phi^{\pi}_*$. Let $\mu\in \mathrm{Prob}(\mathcal{A}^G,G)$ be a periodic measure, and write $\supp(\mu)=\bigsqcup_{i=1}^kG.\omega_i$ for some elements $\omega_i\in \mathcal{A}^G$ with $|G.\omega_i|<\infty$. Let $H=\bigcap_{i=1}^k\mathrm{stab}_G(\omega_i)$. Since $H$ has finite index in $G$, it contains a normal, finite index subgroup $N$ of $G$, and  we have ${R}(G)\leq N$, so the canonical group homomorphism $\gamma\colon Q\to G/N$ satisfies $\gamma\circ \pi=\pi_N$, where $\pi_N\colon G\to G/N$ is the quotient map. This translates into $\Phi^\pi\circ \Phi^\gamma=\Phi^{\pi_N}$.

 Define $\tilde{\omega}_i\in \mathcal{A}^{G/N}$ by $\tilde{\omega}_i(\pi_N(g))=\omega_i(g)$. This is well defined, since if $g^{-1}g'\in N \leq H=\bigcap_{i=1}^k\mathrm{stab}_G(\omega)$ then $\omega_i(g)=(g^{-1}g'.\omega_i)(g)=\omega_i(g')$. Now set 
$$\tilde{\mu}=\sum_{i=1}^k\frac{\mu(G.\omega_i)}{|G.\omega_i|}\sum_{\zeta \in (G/N).\tilde{\omega}_i}\delta_{{\zeta}}\in \mathrm{Prob}(\mathcal{A}^{G/N},G/N).$$
Indeed, $\tilde{\mu}$ is a probability measure. To see this, note first that for all $g,g'\in G$, 
 $$\Phi^{\pi_N}(\pi_N(g).\tilde{\omega}_i)(g')=\tilde{\omega}_i(\pi_N(g'g))=(g.(\tilde{\omega}_i\circ \pi_N))(g')=(g.\Phi^{\pi_N}(\tilde{\omega}_i))(g'),$$
 so $\Phi^{\pi_N}(\pi_N(g).\tilde{\omega}_i)=g.\Phi^{\pi_N}(\tilde{\omega}_i)=g. \omega_i$ for each $i$. Hence $\Phi^{\pi_N}$ defines a sur\-jec\-tive map $(G/N).\tilde{\omega}_i$ $\to G.\omega_i$. Moreover, since $\pi_N$ is 
 sur\-jec\-tive, $\Phi^{\pi_N}$ is injective by part~(iii) of Lemma \ref{lemma:push-forward}, so $|(G/N).\tilde{\omega}_i|=|G.\omega_i|$. Hence $\tilde{\mu}$ is a convex sum of the uniform measures upon the orbits $(G/N).\tilde{\omega}_i$. 
 
 We have also proven that $\pi_N(g).\tilde{\omega}_i$ is the only preimage of $g\cdot \omega_i$ under $\Phi^{\pi_N}$. It follows that for every $g\in G$, 
 \begin{align*}
     \Phi^{\pi_N}_*\tilde{\mu}(\{g.\omega_i\})&=\tilde{\mu}((\Phi^{\pi_N})^{-1}(\{g.\omega_i\}))=\tilde{\mu}(\{\pi_N(g).\tilde{\omega}_i\})=\frac{\mu(G.\omega_i)}{|G.\omega_i|}=\mu(\{g.\omega_i\}).
 \end{align*}
 Therefore $\Phi^\pi_*(\Phi^{\gamma}_*\tilde{\mu})=\Phi^{\pi_N}_*\tilde{\mu}=\mu$, 
 where by part~(ii) of the previous lemma, $\Phi^{\gamma}_*\tilde{\mu}$ is $Q$-invariant. 
 Consequently, $\mu\in \Phi^\pi_*(\mathrm{Prob}(\mathcal{A}^Q,Q))$.
\end{proof}

\begin{cor} \label{cor:PAEPA}
	Let $G$ be a PA (resp.\@ EPA) group, and let $\mathcal{A}$ be a finite set. Then every $G$-process $(G,\mathcal{A},\mu)$ admits a residually finite model (resp. ergodically residually finite model).
\end{cor}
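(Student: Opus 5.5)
The plan is to combine the weak-$*$ density of periodic measures with Proposition~\ref{prop:periodic_locally_approx}(ii) and the closedness statement Lemma~\ref{lemma:stability_under_limits}(ii).

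First, let $G$ be a PA group and $\mu\in\mathrm{Prob}(\mathcal{A}^G,G)$. By definition of property PA there is a sequence $(\nu_k)$ of periodic measures in $\mathrm{Prob}(\mathcal{A}^G,G)$ with $\nu_k\overset{w*}{\to}\mu$. (Equivalently, Proposition~\ref{prop:EPA_quotient}(i) with $Q=G$, $\pi=\mathrm{id}$ gives $\mu\in\overline{\mathcal{P}(\mathcal{A}^G)}$.) Since PA groups are residually finite (as recalled above, non-residually finite groups fail PA), Proposition~\ref{prop:periodic_locally_approx}(ii) applies to each $\nu_k$ and yields a residually finite model $(V_n^{(k)},\sigma_n^{(k)},\mu_n^{(k)})$ for $(G,\mathcal{A},\nu_k)$, with $(\Pi_v)_*\mu_n^{(k)}=\nu_k$ for all $v$. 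Lemma~\ref{lemma:stability_under_limits}(ii) then gives a residually finite model for $\mu$.

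For the EPA case, take the $\nu_k$ periodic and ergodic. The key point is that the residually finite model of Proposition~\ref{prop:periodic_locally_approx}(ii) is actually ergodically residually finite when $\nu_k$ is ergodic: either directly from Proposition~\ref{prop:leergodic} (lw$^*$-convergence to an ergodic measure upgrades to le-convergence), or by hand, since for ergodic periodic $\nu_k$ (uniform on a single orbit $G.\omega$) the measure $\mu_n$ is uniform on the orbit $(G/N_n).\rho_n$ and every $\rho$ in its support has empirical distribution $P^{\sigma_n}_\rho$ exactly equal to $\nu_k$, because $\Pi^{\sigma_n}_v$ is a bijection of orbits for each $v$. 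Then Lemma~\ref{lemma:stability_under_limits}(ii), ergodic version, passes to the limit $\mu$. Note $\mu$ itself need not be ergodic, which is why we use the direct empirical computation rather than Proposition~\ref{prop:leergodic} applied to $\mu$.

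The main obstacle is hidden in Lemma~\ref{lemma:stability_under_limits}(ii): the diagonal argument must produce a single residually finite approximation (a decreasing chain $\Gamma_n$) rather than approximations depending on $k$. I would handle this by fixing one residually finite approximation $(G/\Gamma_n)$ at the outset and noting that in the proof of Proposition~\ref{prop:periodic_locally_approx}(ii) the approximation is $G/(N^{(k)}\cap\Gamma_n)$; choosing $N^{(k)}$ decreasing in $k$ and selecting $n_k$ increasing with the relevant neighbourhoods of $\mu$ achieved, the diagonal subgroups $N^{(k)}\cap\Gamma_{n_k}$ form a decreasing chain with trivial intersection, and the measures $\mu^{(k)}_{n_k}$ remain invariant under $G/(N^{(k)}\cap\Gamma_{n_k})$.
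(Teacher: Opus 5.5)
Your proposal is correct and follows essentially the same route as the paper: density of (ergodic) periodic measures, Proposition~\ref{prop:periodic_locally_approx}(ii) for each periodic approximant, and the diagonal argument of Lemma~\ref{lemma:stability_under_limits}(ii). You also spell out two points the paper leaves implicit. The first is why the models for ergodic periodic approximants are already le-convergent, which is what lets the EPA case give ergodically residually finite models even when $\mu$ is not ergodic. The second is how to arrange a single decreasing chain of normal subgroups in the diagonal step.
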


\begin{rem}
Shriver has obained in  \cite{Shr23}*{Proposition~4.1}  a characterization of PA groups through weak-$*$ approximation of measures on $\mathcal{A}^G$ via periodic measures of the form 
\[
\frac{1}{|V_n|} \sum_{v \in V_n} \delta_{\Pi^{\sigma_n}_v(\rho)},
\]
with $(V_n,\sigma_n)$ being a sofic approximation consisting of homomorphisms and $\rho \in \mathcal{A}^{V_n}$. While this construction is tailor-made for the empirical distributions to resemble the original measure, the difficulty remains to emulate the periodic measures in model space and also to obtain residually finite models. We have developed the arguments needed for this purpose in the proof of Proposition~\ref{prop:periodic_locally_approx}.
\end{rem}

\begin{proof}[Proof of Corollary~\ref{cor:PAEPA}]
This is a direct consequence of Proposition \ref{prop:EPA_quotient} (applied for $\pi=\mathrm{id}\colon G\to G$). That the models can taken to be residually finite models follows by noting that $\Phi^{\text{id}}_*$ is the identity map, and by applying Proposition \ref{prop:periodic_locally_approx} and Lemma \ref{lemma:stability_under_limits}.
\end{proof}

\begin{rem}
    The group $G/{R}(G)$ is called the largest residually finite quotient of $G$ because every other residually finite quotient $T$ of $G$ factors as a quotient of $G/{R}(G)$. Now, one could be lead to define a {\em largest PA quotient of} $G$ to be a PA quotient $Q$ such that any other PA quotient $T$ factors as a quotient of $Q$, but it turns out that such an object might not exist.  As an example, take $G=\mathbb{F}_2\times \mathbb{F}_2$ and assume $N\trianglelefteq \mathbb{F}_2\times \mathbb{F}_2$ is such that $Q:=(\mathbb{F}_2\times \mathbb{F}_2)/N$ is a largest PA quotient for $\mathbb{F}_2\times \mathbb{F}_2$. Since $\mathbb{F}_2\times 1$ and $1\times \mathbb{F}_2$ are both PA quotients of $\mathbb{F}_2 \times\mathbb{F}_2$, they have to factor as quotients of $Q$, implying $N\leq (1\times \mathbb{F}_2)\cap (\mathbb{F}_2\times 1)=1$, so $Q=\mathbb{F}_2\times \mathbb{F}_2$. However, $\mathbb{F}_2\times \mathbb{F}_2$ is not a PA group (see \cite{BK20}*{p. 2699} combined with \cite{ji2021}). 
\end{rem}

\begin{exmp}[Baumslag-Solitar groups] \label{exa:BSgroups}
For $m,n\geq 1$ the Baumslag-Solitar group with parameters $m,n$ is defined as
$$\mathrm{BS}(m,n)=\langle a,b\mid ab^m=b^na\rangle.$$
As an HNN extension of $\Z$, $\mathrm{BS}(m,n)$ is a sofic group \cite{collins2010free}.
The groups $\mathrm{BS}(1,n)$ are residually finite and amenable, hence they are EPA by \cite{ren2018} and so they fall under the hypotheses of Corollary \ref{cor:PAEPA}. If $m,n>1$, then the group $\mathrm{BS}(m,n)$ is a non-ascending HNN extension of $\Z$, so it is a non-amenable group, and if moreover the sets of prime divisors of $m$ and $n$ differ, it is well-known that $\mathrm{BS}(m,n)$ is non-Hopfian \cite{baumslag1962some}, so it cannot be residually finite. In particular, if $m,n>1$ are coprime integers then the group $\mathrm{BS}(m,n)$ is a non-amenable, non-Hopfian sofic group. Nevertheless, as proven in \cite{kelley2020subgroup}*{Lemma 3.2} the largest residually finite quotient of $\mathrm{BS}(m,n)$ is $Q:=\Z\left[\tfrac{1}{mn}\right]\!\rtimes \Z$, where the generator of $\Z$ acts by multiplication by $m/n$. Note that $Q$ is amenable as an abelian-by-abelian extension. It is also finitely generated, and since it can be written as $Q=H_1H_2$ for two abelian subgroups $H_1,H_2\leq Q$, it is metabelian by \cite{ito1955produkt}. It now follows from \cite{hall1959finiteness}*{Theorem 1} that $Q$ is residually finite, so it has the EPA property. We conclude from Proposition~\ref{prop:EPA_quotient} that, denoting by $\pi$ the quotient map $\mathrm{BS}(m,n)\to Q$, every measure in $\Phi^\pi_*(\mathrm{Prob}(\mathcal{A}^Q,Q))$ gives rise to a $G$-process admitting an ergodically sofic model.

\end{exmp}

\section{Continuous operators and weak spectral convergence} \label{sec:weakconvergence}

In this section we introduce the relevant families of operators which we call {\em continuous operators}. Existence and the properties of their underlying 
finite volume-analogs via sofic approximations, called {\em induced approximations} below are discussed in detail in Section~\ref{sec:inducedapprox}. Afterwards, we define the density of the states measure and prove weak convergence along induced approximations, cf.\@ Theorem~\ref{lemma-weakconvergence} in Section~\ref{sec:weakconv}.

\medskip

Given a graph $(V,E)$, we
 say that $M \in \N$ is a {\em finite hopping range parameter} for an operator $H:\ell^2(V) \to \ell^2(V)$ if $H(x,y) = 0$ whenever $d_E(x,y)>M $.
We now consider a finitely generated sofic group $G$ with symmetric generating set $S$.  Let $(V,E) = \mathrm{Cay}(G,S)$ be the Cayley graph of $G$ with respect to $S$ and canonical edge labeling $\vartheta((x,y)) = yx^{-1}$.
 Moreover, for a compact metric space $\mathcal{X}$, a family $\{H^{\omega}\}_{\omega \in \mathcal{X}^G}$ of operators $H^{\omega}:\ell^2(G) \to \ell^2(G)$ is said to be {\em equivariant} if 
	\[
	H^{\omega}\big(xg,yg\big) = H^{g.\omega}(x,y), \quad \mbox{ for all } \quad x,y,g \in G, \quad \omega \in \mathcal{X}^G.
	\]  
Defining the unitary operators $U_g:\ell^2(G) \to \ell^2(G)$, $U_g u(x) = u(xg)$, the above equivariance condition is equivalent to
\[
H^{g.\omega} = U_{g} H^{\omega} U_{g^{-1}}, \quad \omega \in \mathcal{X}^G, \quad g \in G.
\]

\begin{defn}[Continuous operator]
	A family $\{H^{\omega}\}_{\omega \in \mathcal{X}^G}$ of operators $H^{\omega}:\ell^2(G) \to \ell^2(G)$ is called a {\em continuous operator over $\mathrm{Cay}(G,S)$ (with parameter $M$)} if 
	\begin{itemize}
		\item for all $x,y \in G$, the map $\omega \mapsto H^{\omega}(x,y)$ is continuous;
		\item the operator $H^{\omega}$ is self-adjoint for each $\omega \in \mathcal{X}^G$;
		\item there is $M \in \N$ which is a finite hopping range parameter for all $H^{\omega}$, $\omega \in \mathcal{X}^G$;
		\item the family $\{H^{\omega}\}$ is equivariant. 
	\end{itemize}
	We say that the continuous operator $\{H^{\omega}\}$ is of {\em bounded interaction} if in addition, for all $\omega_1, \omega_2 \in \mathcal{X}^G$ and each $x \in G$, we have 
	\[
	\omega_{1}\rvert_{B_S(e,M)} = \omega_{2}\rvert_{B_S(e,M)} \quad \Longrightarrow \quad H^{\omega_1}(e,x) = H^{\omega_2}(e,x).
	\]
\end{defn}

It is clear from the definition that the choice of generating set $S$ matters. However, we will assume that $S$ is fixed as soon as $G$ is. Thus, with a slight abuse of terminology, we will also say that $\{H^{\omega}\}$ is a {\em continuous operator over $G$} or just that $\{H^{\omega}\}$ is a {\em continuous operator} if the reference to $G$ and $S$ is clear. Given a process $(G,\mathcal{X},\mu)$ we will also refer to $\{H^{\omega}\}$ as a {\em random operator} (with law $\mu$), see the terminology used in the title of this work and in the introduction.

\begin{rem} \label{rem:boundedinteraction}
    We make a couple of further remarks. 
    \begin{itemize}
        \item One could introduce an extra parameter for the bounded interaction condition which might be different from $M$. However, one can clearly match the two parameters by taking their maximum.
          \item Note continuity of the maps $\omega \mapsto H^{\omega}(x,y)$ together with the finite hopping range property and equivariance  automatically imply that 
          that the operators are uniformly bounded in operator norm, i.e.\@ 
		\[
		\sup_{\omega \in \mathcal{X}^G} \big\| H^{\omega}\big\| < \infty, 
		\]
        see also the formula~\eqref{eqn:bounded!} below. 
        \item  In the case that $\mathcal{X} = \mathcal{A}$ is finite, bounded interaction implies   that the functions $\omega \mapsto H^{\omega}(x,y)$ are locally constant, and hence automatically continuous. 
       \item  
       If the coefficients $H^{\omega}(x,y)$ are only taken from a finite set of possible values, the continuity property also forces the maps $\omega \mapsto H^{\omega}(x,y)$ to be locally constant. Increasing the parameter $M$ if necessary, we can make sure that $\{H^{\omega}\}$ is of bounded interaction in this case. 
    \end{itemize}
\end{rem}

\subsection{Induced approximations for sofic Cayley graphs} \label{sec:inducedapprox}
We now define finite-dimensional approximants for continuous operators over Cayley graphs $\mathrm{Cay}(G,S)$ of finitely generated sofic groups $G$. To this end, fix a sofic approximation $(V_n,\sigma_n)$ to $G$ consisting of $S$-labeled graphs.

\begin{defn}[Induced approximations]\label{def-randomapprox} 
	Let $M \in \N$. 
	A sequence $\big( \{ H^{\rho}_n \}_{\rho \in \mathcal{X}^{V_n}} \big)_{n \in \N}$ of families of  operators $H_n^{\rho}:\ell^2(V_n) \to \ell^2(V_n)$ over $(V_n,E_n)$ is a {\em weak approximation over $(V_n,\sigma_n)$}  for a continuous operator $\{H^{\omega}\}$ over $G$ with parameter $M$ if 
	\begin{itemize}
		\item for all $n \in \N$ and $x,y \in V_n$, the map $\rho \mapsto H_n^{\rho}(x,y)$ is Borel,
  \item for all $n \in \N$ and $\rho \in \mathcal{X}^{V_n}$, the operator $H_n^{\rho}$ is self-adjoint,
  \item $M$ is a finite hopping range parameter for all operators $ H^{\rho}_n$, 
			\item for all $n \in \N$ and all $4 M$-good vertices $w \in V_n$, we have 
		\begin{align*}
			H_n^{\rho}(w,v)=H^{\prod_w^{\sigma_n}(\rho)}(e,g)
		\end{align*}
		for all $\rho \in \mathcal{X}^{V_n}$, $v \in B_{E_n}(w,M)$ and the unique $g \in B_S(e,M)$ with $\sigma_n^g(w)=v$. 
	\end{itemize}
 We say that $\big( \{ H^{\rho}_n \}_{\rho \in \mathcal{X}^{V_n}} \big)_{n \in \N}$ is an {\em induced approximation} for $\{H^{\omega}\}$  if it is a weak approximation for $\{H^{\omega}\}$ and in addition, 
 \begin{itemize}
 \item  
 for each $n \in \N$ and $\rho \in \mathcal{X}^{V_n}$, and all $v,w \in V_n$, we have $$H_n^{\rho}(v,w) \in \{H^{\omega}(x,y):\, x,y \in V,\, \omega \in \mathcal{X}^G \} \cup \{0\}.
		$$
 \end{itemize}
\end{defn}

\begin{rem} \label{rem:bounded}
	The fifth property says that all $H_n^{\rho}$ do not take coefficients other than coefficients from $\{H^{\omega}\}$ or $0$. The fourth property is a stronger condition for $4M$-good vertices. Here, it is required that the $H_n^{\rho}$ take their coefficients from $H^{\omega}$ at places where the  geometric and color structure of $(V_n,E_n)$ and $\mathrm{Cay}(G,S)$ are identical. Note further that 
	all graphs $(V_n,E_n)$ and $\mathrm{Cay}(G,S)$ have a uniform bound on their vertex degree. Since the uniform boundedness of $\{H^{\omega}\}$ is equivalent to
	\begin{align} \label{eqn:bounded!}
	\sup_{x,y \in G} \sup_{\omega \in \mathcal{X}^G} \big| H^{\omega}(x,y) \big| < \infty,
	\end{align}
	the fifth property guarantees that 
	 all operator norms are uniformly bounded, i.e.\@ 
	\begin{align*} 
		\max\big\{ \sup_{n \in \N} \sup_{\rho \in \mathcal{X}^{V_n}} \big\| H_n^{\rho} \big\|,\, \sup_{\omega \in \mathcal{X}^G} \|H^{\omega}\| \big\} < \infty. 
	\end{align*}
\end{rem}

\begin{exmp} \label{exa:Schrodinger}
	Let $(V,E) = \mathrm{Cay}(G,S)$ be the Cayley graph over a sofic group generated by a finite, symmetric set $S \subseteq G$. 
	Let $\Delta$ be the  graph Laplacian $\Delta\colon \ell^2(G)\to \ell^2(G)$, 
	\[\Delta(u(x))=\sum_{y\sim x} (u(y)- u(x))\]
	where the sum runs over all vertices connected with an edge to $x$. Then the collection $\{H^{\omega}\}$, with 
	$H^\omega\colon\ell^2(G)\to \ell^2(G)$ given by
	\[H_F^\omega u(x)=\Delta u(x) + F(\omega(x))u(x)\]
	for  $\omega\in \mathcal{X}^G$, $u\in \ell^2(G)$, $x\in G$ and a countinuous map $F\colon \mathcal{X}\to \R$, defines a continuous operator with parameter $M=1$. The family $\{H^{\omega}_F\}$ consists of discrete Schr\"odinger operators with real-valued potentials $F \circ \omega$. The equivariance follows from a short computation for $g,h \in G$ and $u \in \ell^2(G)$:
	\begin{align*}
		& (U_{g} H^\omega_F U_{g^{-1}} u)(h) = (H^\omega_F U_{g^{-1}} u)(hg) = (\Delta U_{g^{-1}} u)(hg)+ F(\omega(hg))(U_{g^{-1}} u)(hg) \\
		& \quad =\sum_{z:z\sim hg} \big( (U_{g^{-1}}u)(z)-(U_{g^{-1}}u)(hg) \big) +F(\omega(hg))u(h)\\
		& \quad =\sum_{z:zg^{-1} \sim h} \big( u(zg^{-1})-u(h) \big) +F(\omega(hg))u(h) \\
		& \quad =\sum_{z:z\sim h} \big( u(z)-u(h) \big) +F(g.\omega(h))u(h) = H^{g.\omega}_Fu(h).
	\end{align*}
	Given a sofic approximation $(V_n, \sigma_n)$, we can define an  induced approximation 
 $(\{H_n^{\rho}\}_{\rho})_{n \in \N}$ by setting
	\[H_n^\rho u(v)=\Delta u(v)+F(\rho(v))u(v)\]
	for $\rho\in \mathcal{X}^{V_n}$, $u\in \ell
	^2(V_n)$, and $v\in V_n$. Given a process $(G,\mathcal{X},\mu)$ we call $\{H_F^{\omega}\}$ a {\em random Schr\"odinger operator} with {\em potential $F$}. 
\end{exmp}

\medskip

		The next proposition and its proof are based on the following general fact: if $(V,E,\sigma)$ is a finite graph in a sofic approximation, and if $v \in V$ is a $Q$-good vertex for some $Q \in \N$, then for each $w \in B_{E_n}(v,Q)$, there is a unique $g \in G$ with $|g|_S \leq Q$ such that $\sigma^g(v)=w$.

	\begin{prop}[Existence of induced approximations]  \label{prop:goodapprox}
		Suppose $(V,E) = \mathrm{Cay}(G,S)$ for a sofic group $G$ generated by a finite set $S$, and let $(V_n,E_n,\sigma_n)$ be a sofic approximation. 
		Let  $\{H^{\omega}\}_{\omega \in \mathcal{X}^G}$ be a continuous operator with parameter $M$, where $\mathcal{X}$ is a compact metric space.
		Suppose that in addition, at least one of the following holds:
		\begin{itemize}
			\item all maps $\sigma_n: G \to \mathrm{Sym}(V_n)$ are homomorphisms;
			\item the continuous operator $\{H^{\omega}\}$ is of bounded interaction.
		\end{itemize}  
	Then an  induced approximation $\big( \{H_n^{\rho}\} \big)$ can be defined by setting
		\begin{align} \label{eqn:2M!}
			H^{\rho}_n(w,v) = H^{\Pi_w^{\sigma_n}(\rho)}(e,g),\quad \rho \in \mathcal{X}^{V_n}, \quad v,w \in V_n,
		\end{align}
		if both $v,w $ are $2M$-good vertices, $v \in B_{E_n}(w,M)$ and $g \in G$ is unique with $|g|_S \leq M$ and $\sigma^g(w) =v$, and setting $H^{\rho}(v,w) = 0$ in all other cases. 
	\end{prop}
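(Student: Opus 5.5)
We check the five defining properties of an induced approximation for the family defined by \eqref{eqn:2M!}. Borel measurability is immediate: for fixed $v,w$ the coefficient is either $0$ or $\rho\mapsto H^{\Pi_w^{\sigma_n}(\rho)}(e,g)$. Here $\Pi_w^{\sigma_n}\colon\mathcal{X}^{V_n}\to\mathcal{X}^G$ is continuous, since each coordinate is a projection, and $\omega\mapsto H^\omega(e,g)$ is continuous. The finite hopping range parameter $M$ holds by construction, because nonzero entries only occur for $v\in B_{E_n}(w,M)$. Membership of the coefficients in $\{H^\omega(x,y)\}\cup\{0\}$ is also built into the definition. The fourth property concerns a $4M$-good $w$ and $v\in B_{E_n}(w,M)$. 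Such a $w$ is in particular $2M$-good, and every $v$ in its $M$-ball is $3M$-good, hence $2M$-good, because $B_{E_n}(v,3M)\subseteq B_{E_n}(w,4M)$ and the isomorphism with $B_S(e,4M)$ recentres. So \eqref{eqn:2M!} applies and gives exactly the required formula.

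The real content is self-adjointness, namely $H_n^\rho(v,w)=\overline{H_n^\rho(w,v)}$. If either vertex is not $2M$-good, or $d_{E_n}(v,w)>M$, both entries vanish by definition; the graphs are assumed edge-symmetric, so $d_{E_n}$ is symmetric. Otherwise let $g$ be the unique element with $|g|_S\le M$ and $\sigma_n^g(w)=v$, and let $h$ be the unique element with $|h|_S\le M$ and $\sigma_n^h(v)=w$. Since $w$ is $2M$-good, the labelled isomorphism $B_{E_n}(w,2M)\cong B_S(e,2M)$ shows that $\sigma_n^{g^{-1}}\sigma_n^g(w)=w$ along the relevant path. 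Hence $h=g^{-1}$. By equivariance and self-adjointness of $H^\omega$,
\[
H^{\Pi_v(\rho)}(e,g^{-1})=\overline{H^{\Pi_v(\rho)}(g^{-1},e)}=\overline{H^{g.\Pi_v(\rho)}(e,g)}.
\]
It therefore suffices to compare $g.\Pi_v^{\sigma_n}(\rho)$ with $\Pi_w^{\sigma_n}(\rho)$. Their coordinates at $x\in G$ are $\rho(\sigma_n^{xg}(w))$ and $\rho(\sigma_n^x(w))$ respectively. The first comes from $\Pi_v(\rho)(xg)=\rho(\sigma_n^{xg}(v))$, which must be compared with $\rho(\sigma_n^x\sigma_n^g(w))$ after tracking the convention correctly.

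This comparison is the main obstacle, and the two hypotheses are used exactly here. If all $\sigma_n$ are homomorphisms, then $\sigma_n^x\sigma_n^{g}=\sigma_n^{xg}$ identically. The two configurations then agree on all of $G$ (they are equal rather than merely close), and self-adjointness follows. If instead $\{H^\omega\}$ has bounded interaction, it suffices that the two configurations agree on $B_S(e,M)$, since $H^\omega(e,g)$ depends only on $\omega|_{B_S(e,M)}$. For $|x|_S\le M$ and $|g|_S\le M$, the word for $xg$ has length at most $2M$. Because $w$ is $2M$-good, the labelled ball isomorphism gives $\sigma_n^{x}(\sigma_n^{g}(w))=\sigma_n^{xg}(w)$ for all such $x$; this uses that following labelled edges inside a good ball reproduces multiplication in $G$. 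So the restrictions coincide and the entries are conjugate.

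It remains to double-check the left/right conventions: the Cayley graph uses $y=sx$, the shift uses $h.\omega(g)=\omega(gh)$, and $\Pi_v(\rho)(g)=\rho(\sigma^g(v))$. The factorization above may need the order $\sigma^x\sigma^g$ versus $\sigma^g\sigma^x$ adjusted, but the goodness radius $2M$ covers either order, since only products of two elements of length at most $M$ occur.
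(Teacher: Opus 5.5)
Your overall strategy is the paper's: equivariance plus self-adjointness of $H^\omega$ reduce self-adjointness of $H_n^\rho$ to an identity between two lifted configurations. That identity holds on all of $G$ for homomorphisms, and on $B_S(e,M)$ by $2M$-goodness otherwise. The routine properties are fine, including your explicit check of the $4M$-good condition, which the paper leaves implicit. \textbf{However, the central equivariance step is misapplied.} From $H^\omega(xk,yk)=H^{k.\omega}(x,y)$ with $x=e$, $y=g$, $k=g^{-1}$ one gets $H^{\omega}(g^{-1},e)=H^{g^{-1}.\omega}(e,g)$, not $H^{g.\omega}(e,g)$. So the identity you set out to verify, $g.\Pi_v^{\sigma_n}(\rho)=\Pi_w^{\sigma_n}(\rho)$, is the wrong one, and it is false in general, even for homomorphisms. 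Take $G=\Z$ with $\sigma_n^k(u)=u+k$ on $\Z/n\Z$. Then $w=v-g$, while $g.\Pi_v^{\sigma_n}(\rho)(x)=\rho(v+g+x)$ and $\Pi_w^{\sigma_n}(\rho)(x)=\rho(v-g+x)$. Your own coordinate description already shows the mismatch: $g.\Pi_v^{\sigma_n}(\rho)(x)=\rho(\sigma_n^{xg}(v))$, not $\rho(\sigma_n^{xg}(w))$. Your closing remark about swapping $\sigma_n^x\sigma_n^g$ and $\sigma_n^g\sigma_n^x$ does not address this, since the problem is $g$ versus $g^{-1}$, not the order of composition.

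To fix it, note what your last two paragraphs actually prove. The computation $\sigma_n^{xg}(w)=\sigma_n^x(\sigma_n^g(w))=\sigma_n^x(v)$ gives $g.\Pi_w^{\sigma_n}(\rho)=\Pi_v^{\sigma_n}(\rho)$, on $G$ in the homomorphism case and on $B_S(e,M)$ in the bounded-interaction case. This is exactly the identity the paper uses, running the computation from the other entry:
\begin{align*}
H_n^\rho(w,v)&=H^{\Pi_w^{\sigma_n}(\rho)}(e,g)=H^{g.\Pi_w^{\sigma_n}(\rho)}(g^{-1},e)=\overline{H^{g.\Pi_w^{\sigma_n}(\rho)}(e,g^{-1})}\\
&=\overline{H^{\Pi_v^{\sigma_n}(\rho)}(e,g^{-1})}=\overline{H_n^\rho(v,w)}.
\end{align*}
The fourth equality holds by the homomorphism property or by bounded interaction. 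If you prefer to keep your orientation, you need $g^{-1}.\Pi_v^{\sigma_n}(\rho)=\Pi_w^{\sigma_n}(\rho)$ on $B_S(e,M)$. In the bounded-interaction case you cannot get this by inverting the restricted identity above, because $xg^{-1}$ leaves $B_S(e,M)$. Instead you must derive it from the $2M$-goodness of $v$, i.e.\ $\sigma_n^{xg^{-1}}(v)=\sigma_n^x(w)$ for $|x|_S\le M$.
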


 	\begin{rem}
        In the respective contexts of approximation, analogous constructions have appeared before in the literature, see e.g.\@ 
		Section~3.1 in \cite{Ele08},
		 Lemma~2.2 and  the discussion thereafter in \cite{SS15}, or Lemma~9.8. and the preceding discussion in \cite{PogPhD}. 
	\end{rem}

	\begin{proof}
		Let $\{H_n^{\rho}\}$ be defined as above. Note that the operators are well-defined since for an $M$-good vertex $v$ there is a unique $g \in G$ with $|g|_S \leq M$ and $\sigma^g(w) =v$. The measurability of the maps 
		\[
		\rho \mapsto H_n^{\rho}(w,v) = H^{\Pi_w^{\sigma_n}(\rho)}(e,g)
		\]
		is an immediate consequence from the measurability of the maps
		\[
		\rho \mapsto \Pi_w^{\sigma_n}(\rho), \quad \omega \mapsto H^{\omega}(e,g),
		\]
		and the measurability of the constant $0$ map. 
		
		\medskip
        By construction, $M$ is a finite hopping range parameter for all $H_n^{\rho}$, and the operators $H_n^{\rho}$ take values only from operators in $\{H^{\omega}\}$ or $0$. 
		
\medskip
It remains to be shown that the operators $H_n^{\rho}$ are all self-adjoint. Here we will need one of the additional assumptions. 
First note that we can assume that $v,w \in V_n$ are both $2M$-good and $d_{E_n}(v,w) \leq M$, since otherwise we get from the definition $H_n^{\rho}(v,w) = 0 = H_n^{\rho}(w,v)$. Let $\bar{g}\in B_S(e,M)$ the unique element with $\sigma_n^{\bar{g}}(w)=v$. Since also $v$ is $2M$-good, $\bar{g}^{-1}$ is the unique element such that $\sigma_n^{\bar{g}^{-1}}(v) = w$. Using the equivariance condition, we find 
		\begin{align*}
			H_n^{\rho}(w,v) &= H^{\prod_w^{\sigma_n}(\rho)}(e,\bar{g}) = H^{\overline{g}\prod_w^{\sigma_n}(\rho)}(\bar{g}^{-1},e). 
		\end{align*} 
		Further, by self-adjointness, we have 
		\[
		H^{\overline{g}\prod_w^{\sigma_n}(\rho)}(\bar{g}^{-1}, e) = \overline{ H^{\overline{g}\prod_w^{\sigma_n}(\rho)}(e,\bar{g}^{-1} )}.
		\]
		Now assume first that $\sigma_n: g \mapsto \sigma_n^g$ is a homomorphism. 
		Then
		\begin{align*}
		\bar{g} \Pi_w^{\sigma_n}(\rho) &= \bar{g} \big( \rho\big( \sigma^h_n(w)  \big) \big)_{h \in G} = \big( \rho\big( \sigma^{h\bar{g}}_n(w)  \big) \big)_{h \in G} \\
		&= \big( \rho\big( \sigma^{h}_n(\sigma_n^{\bar{g}}(w))  \big) \big)_{h \in G} =   \Pi_{v}^{\sigma_n}(\rho). 
		\end{align*} 
		With what we showed above, we arrive at
		\begin{align} \label{eqn:self-adjoint!}
		H_n^{\rho}(w,v) = \overline{ H^{\overline{g}\prod_w^{\sigma_n}(\rho)}(e,\bar{g}^{-1})} = \overline{ H^{\prod_v^{\sigma_n}(\rho)}(e,\bar{g}^{-1} )} = \overline{H_n^{\rho}(v,w)}
		\end{align}
		in this case. As for the second case, note that even if we don't know whether $\sigma_n$ is an isomorphism, we know by the almost homomorpism property of sofic approximations, together with the facts that $|\bar{g}|_S \leq M$ and $v,w$ are $2M$-good that for each $h \in B_S(e,M)$
		\begin{align*}
				 \bar{g}\Pi_w^{\sigma_n}(\rho ) (h) &=  \rho\Big( \sigma_n^{h\bar{g}}(w) \Big)  =   \rho\Big( \sigma_n^{h} (\sigma_n^{\bar{g}}(w)) \Big) = \rho\big( \sigma_n^{h}(v) \big)  = \Pi^{\sigma_n}_v(\rho)(h),
			\end{align*}
		and thus, 
		\[
		\bar{g}\Pi_w^{\sigma_n}(\rho)\rvert_{B_S(e,M)} =  \Pi_v^{\sigma_n}(\rho)\rvert_{B_S(e,M)}. 
		\]
		With the additional assumption that $\{H^{\omega}\}$ is of bounded interaction, the same calculation as in~\eqref{eqn:self-adjoint!} concludes the proof. 
		\end{proof}
	
		We will now consider powers of measurable operators. Note that for a  bounded operator $H:\ell^2(V) \to \ell^2(V)$ over a bounded graph $(V,E)$ and $k \in \N$ with finite hopping range parameter $M \in \N$, the coefficients of the operator $H^k$ are given by the formula
		\begin{align} \label{eqn:powers}
			H^{k}(x,y) = \sum_{(z_i)} \prod_{i=1}^k H(z_i, z_{i+1}),
		\end{align}
		where the sum runs over all tupes $(z_i) \in V^{k+1}$  connecting $x$ and $y$ (i.e.\@ $z_1 = x$ and $z_{k+1}=y$) with the additional property that $d_{E}(z_i, z_{i+1}) \leq M$. In particular, $H^k$ is self-adjoint whenever $H$ is self-adjoint.
	
		\medskip
		With these observations, one quickly arrives at the following for a continuous operator over a Cayley graph $(V,E)= \mathrm{Cay}(G,S)$ with sofic approximation $(V_n,E_n,\sigma_n)$. 
		
		\begin{lem} \label{lem:powers}
			Let $k \in \N$. 
			If $\{H^{\omega}\}$ is a continuous operator with parameter $M \in \N$, then so is the family $\{(H^{\omega})^k\}$ with parameter $k\cdot M$.
		\end{lem}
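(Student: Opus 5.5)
We verify the four defining properties of a continuous operator for the family $\{(H^{\omega})^k\}$, using the path expansion~\eqref{eqn:powers}. Throughout, $(V,E)=\mathrm{Cay}(G,S)$ and $M$ is the parameter of $\{H^{\omega}\}$.

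\emph{Finite hopping range.} In~\eqref{eqn:powers}, every nonzero summand comes from a tuple $(z_i)$ with $d_S(z_i,z_{i+1})\le M$. By the triangle inequality such a tuple satisfies $d_S(x,y)\le kM$. Hence $(H^{\omega})^k(x,y)=0$ whenever $d_S(x,y)>kM$, so $kM$ is a finite hopping range parameter for every $(H^{\omega})^k$.

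\emph{Continuity.} Fix $x,y\in G$. The tuples $(z_i)\in G^{k+1}$ with $z_1=x$, $z_{k+1}=y$ and consecutive distances at most $M$ all lie in $B_S(x,kM)$. This ball is finite because $\mathrm{Cay}(G,S)$ has bounded degree. So~\eqref{eqn:powers} is a finite sum of finite products of the continuous maps $\omega\mapsto H^{\omega}(z_i,z_{i+1})$. Moreover, the index set of this sum does not depend on $\omega$, so $\omega\mapsto (H^{\omega})^k(x,y)$ is continuous.

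\emph{Self-adjointness.} Each $H^{\omega}$ is bounded, since $\sup_{\omega}\|H^{\omega}\|<\infty$ by Remark~\ref{rem:boundedinteraction}, and self-adjoint. Therefore $((H^{\omega})^k)^*=((H^{\omega})^*)^k=(H^{\omega})^k$. One can also read this off~\eqref{eqn:powers} directly: reversing a tuple and conjugating each factor shows $(H^{\omega})^k(x,y)=\overline{(H^{\omega})^k(y,x)}$.

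\emph{Equivariance.} We use the operator form $H^{g.\omega}=U_gH^{\omega}U_{g^{-1}}$, where $U_g$ is unitary with $U_{g^{-1}}=U_g^{-1}$. Taking $k$-th powers, the inner factors $U_{g^{-1}}U_g$ cancel, giving $(H^{g.\omega})^k=U_g(H^{\omega})^kU_{g^{-1}}$. This is exactly equivariance of the family $\{(H^{\omega})^k\}$.

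The only step needing any care is continuity. The point is that the sum in~\eqref{eqn:powers} runs over a finite index set independent of $\omega$, which follows from the bounded degree of $\mathrm{Cay}(G,S)$. The other three properties are immediate.
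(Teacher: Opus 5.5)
Your proof is correct and matches the paper's approach: the paper states the lemma as an immediate consequence of the path expansion~\eqref{eqn:powers} and gives no further detail, and you carry out exactly that verification property by property. The one variation is that you obtain equivariance from the conjugation form $H^{g.\omega}=U_gH^{\omega}U_{g^{-1}}$ rather than from the expansion, which is equally immediate.
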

	
		The next lemma shows that $k$-powers of  weak approximations for $\{H^{\omega}\}$ give rise to weak approximations for $\{(H^{\omega})^k\}$.

	\begin{lem} \label{prop:inducedpowers}
		Let $\big(\{H_n^{\rho}\}_{\rho \in \mathcal{X}^{V_n}}\big)$ be a weak approximation  for a continuous operator $\{H^{\omega}\}_{\omega \in \mathcal{X}^G}$ with parameter $M \in \N$. Assume in addition that at least one of the following holds:
				\begin{itemize}
				\item all maps $\sigma_n: G \to \mathrm{Sym}(V_n)$ are homomorphisms;
				\item the continuous operator $\{H^{\omega}\}$ is of bounded interaction.
			\end{itemize}
		For each $k \in \N$, $\big( \{ (H_n^{\rho})^k \}_{\rho \in \mathcal{X}^{V_n}} \big)$ is a weak approximation  for $\{(H^{\omega})^k\}_{\omega \in \mathcal{X}^G}$ (with parameter $k\cdot M$). 
	\end{lem}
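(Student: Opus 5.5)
We verify the four defining properties of a weak approximation for $\big(\{(H_n^{\rho})^k\}\big)$ with parameter $kM$, using formula~\eqref{eqn:powers} and Lemma~\ref{lem:powers}, which says that $\{(H^{\omega})^k\}$ is a continuous operator with parameter $kM$.

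The first three properties are routine.
\begin{itemize}
\item \emph{Measurability.} Each coefficient $(H_n^{\rho})^k(x,y)$ is a finite sum of products of the Borel maps $\rho\mapsto H_n^{\rho}(z_i,z_{i+1})$, hence Borel.
\item \emph{Self-adjointness.} Powers of self-adjoint operators are self-adjoint.
\item \emph{Hopping range.} In \eqref{eqn:powers}, a nonzero term requires a chain $z_1,\dots,z_{k+1}$ with consecutive distances at most $M$. Hence $(H_n^\rho)^k(x,y)=0$ whenever $d_{E_n}(x,y)>kM$.
\end{itemize}

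The substantive part is the fourth property. Fix a $4kM$-good vertex $w\in V_n$, a vertex $v\in B_{E_n}(w,kM)$, and the unique $g\in B_S(e,kM)$ with $\sigma_n^g(w)=v$. Expand $(H_n^{\rho})^k(w,v)$ by \eqref{eqn:powers} as a sum over chains $w=z_1,\dots,z_{k+1}=v$ with steps of length at most $M$. All $z_i$ lie in $B_{E_n}(w,kM)$.

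Since $w$ is $4kM$-good, the ball $B_{E_n}(w,4kM)$ is isomorphic to $B_S(e,4kM)$ via a labeled isomorphism sending $w\mapsto e$ and $\sigma_n^h(w)\mapsto h$. This gives a bijection between such chains in $V_n$ and chains $e=h_1,\dots,h_{k+1}=g$ in $\mathrm{Cay}(G,S)$ with $|h_ih_{i+1}^{-1}|_S\le M$. Comparing \eqref{eqn:powers} on both sides, it then suffices to show, for each $i$,
\[
H_n^{\rho}(z_i,z_{i+1}) = H^{\Pi_w^{\sigma_n}(\rho)}(h_i,h_{i+1}).
\]

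For this step:
\begin{itemize}
\item Each $z_i=\sigma_n^{h_i}(w)$ satisfies $d(w,z_i)\le kM$, so every vertex of $B_{E_n}(z_i,3kM)$ lies in $B_{E_n}(w,4kM)$. Since $w$ is $4kM$-good, $z_i$ is $3kM$-good, and in particular $4M$-good. The fourth property for $H_n$ then gives
\[
H_n^{\rho}(z_i,z_{i+1})=H^{\Pi_{z_i}^{\sigma_n}(\rho)}(e,g_i),
\]
where $g_i\in B_S(e,M)$ is unique with $\sigma_n^{g_i}(z_i)=z_{i+1}$. Via the local isomorphism, $g_i=h_{i+1}h_i^{-1}$.
\item Equivariance gives $H^{\Pi_w^{\sigma_n}(\rho)}(h_i,h_{i+1})=H^{h_i.\Pi_w^{\sigma_n}(\rho)}(e,h_{i+1}h_i^{-1})$.
\item It therefore remains to compare $h_i.\Pi_w^{\sigma_n}(\rho)$ with $\Pi_{z_i}^{\sigma_n}(\rho)$. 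If the $\sigma_n$ are homomorphisms, these are equal, exactly as in the proof of Proposition~\ref{prop:goodapprox}.
\item In the bounded-interaction case, we only get agreement on $B_S(e,M)$. For $h\in B_S(e,M)$ one has $\sigma_n^{hh_i}(w)=\sigma_n^h(\sigma_n^{h_i}(w))$. This holds because $w$ is good on a ball containing $hh_i$, so the two words label the same path in the locally Cayley ball. Bounded interaction then gives equality of the coefficients.
\end{itemize}

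The main obstacle is this last bookkeeping. One must justify that $4kM$-goodness of $w$ makes every intermediate vertex sufficiently good, and that the almost-homomorphism identity holds on the needed radius. I would handle both by working entirely inside the isomorphic ball $B_{E_n}(w,4kM)\cong B_S(e,4kM)$, where the labeled paths determine $\sigma_n^{h}$ for all $|h|_S\le 4kM$. The margin $4kM$ versus the radius $(k+1)M$ actually used leaves enough room.
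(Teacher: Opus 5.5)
Your proof is correct and is essentially the paper's argument. The paper inducts on $k$, splitting $(H_n^{\rho})^k=(H_n^{\rho})^{k-1}H_n^{\rho}$. It identifies the new factor via the local bijection $z=\sigma_n^x(v)$, the fourth property at the $4M$-good vertex $z$, equivariance, and the comparison of $x.\Pi_v^{\sigma_n}(\rho)$ with $\Pi^{\sigma_n}_{\sigma_n^x(v)}(\rho)$ (equal, resp.\ equal on $B_S(e,M)$). That is exactly your per-factor step, which you apply to an unrolled sum over chains instead of an induction.

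One small slip: for $k=1$ the inference ``$3kM$-good, hence $4M$-good'' fails. It is harmless, since for $k=1$ only $z_1=w$ is needed. More generally, $z_i$ for $i\le k$ lies within $(k-1)M$ of $w$, so it is $(3k+1)M$-good and hence $4M$-good for every $k\geq 1$.
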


	\begin{proof}
We will give the proof for the fourth condition of a weak approximation. All other requirements are easily verified via the formula~\eqref{eqn:powers}.

\medskip

Let $k \in \N$. We need to show that for $n \in \N$, $\rho \in \mathcal{X}^{V_n}$,
		\begin{align} \label{eqn:power!}
			(H_n^{\rho})^k \big( v, \sigma_n^g(v) \big) = \big( H^{\Pi_v^{\sigma_n}(\rho)} \big)^k(e,g) 
		\end{align}
		for every $4kM$-good vertex $v \in V_n$ and $g \in B_S(e,k\cdot M)$. We proceed by induction on $k$. The case $k=1$ follows from the definition of an induced approximation. 
	 
		 Now assume that the claim holds for $k-1$, $k \geq 2$. We fix some $4kM$-good vertex $v \in V_n$ (if it exists, for otherwise there is nothing to be shown). To relax the notation we will write $H_n$ for $H_n^{\rho}$ and $H$ for $H^{\Pi^{\sigma_n}_v(\rho)}$. For $g \in B_S(e,k  M)$ we have
		\begin{eqnarray*}
			H_n^k\big(v, \sigma_n^g(v)\big)
			&=& \sum_{z \in V_n} H_n^{k-1}\big( v,\, z \big) \cdot  H_n(z,\, \sigma_n^g(v)).
		\end{eqnarray*} 
	We will show that 
		\begin{eqnarray*} 
			H_n^k\big(v,\, \sigma_n^g(v) \big) &=& \sum_{x \in G}  H^{k-1}(e,x) \cdot H(x,g)  = H^k(e,g).
		\end{eqnarray*}
		To verify the latter equality, we observe the following.
		The condition 
		$$H_n^{k-1}\big( v,\, z \big) \cdot  H_n(z,\, \sigma_n^g(v)) \neq 0$$ 
		forces 
		\[
		z \in B_{E_n}(\sigma_n^g(v),M) \cap B_{E_n}(v,(k-1)M).
		\] 
		These $z$ are in a one-to-one correspondence with some unique $x \in B_S(g,M) \cap B_S(e,(k-1)M)$ such that $\sigma_n^{x}(v) = z$ and $\sigma_n^{gx^{-1}}\big(\sigma_n^x(v) \big) = \sigma_n^g(v)$. 
		Similarly as in the proof of existence of induced approximations, we note that 
		\[
		x\Pi_v^{\sigma_n}(\rho)(h) = \rho \big( \sigma_n^{hx}(v) \big) =  \rho\big( \sigma_n^h (\sigma_n^x(v))  \big) = \Pi^{\sigma_n}_{\sigma_n^x(v)}(\rho)(h)
		\]
		for all $h \in G$ if $\sigma_n$ is a homomorphism, and if $H^{\omega}$ is of bounded interaction, then it is enough to note that the above equality holds at least for $h \in B_S(e,M)$.  
		Consequently, by equivariance, and since $z= \sigma_n^x(v)$ is $4M$-good,
		\begin{align*}
			H_n(\sigma_n^x(v), \sigma_n^g(v)) &= H^{\Pi^{\sigma_n}_{\sigma^x_n(v)}(\rho)}(e, gx^{-1}) = H^{x\Pi^{\sigma_n}_{v}(\rho)}(e, gx^{-1}) \\
			&= H^{\Pi_{v}^{\sigma_n}(\rho)}(x,g) = H(x,g),
		\end{align*}
		and by induction hypothesis, $H_n^{k-1}(v,\sigma_n^x(v)) = H^{k-1}(e,x)$. If $z \notin B_{E_n}(\sigma_n^g(v), M)$, then there is no $h \in B_S(e,M)$ such that $z = \sigma_n^{hg}(v) = \sigma_n^h (\sigma_n^g(v)) $, and consequently, 
		\[
		H_n(z,\sigma_n^g(v)) = 0 = H(x,g)
		\]
		for all $x \notin B_S(g,M)$. If $z \notin B_{E_n}(v, (k-1)M)$, then there is no $t \in B_S(e, (k-1)M)$ such that $\sigma_n^t(v) = z$. Since both $H_n^{k-1}$ and $H^{k-1}$ have hopping range $(k-1)M$, we obtain $H_n^{k-1}(v,z) = 0 = H^{k-1}(e,t)$ for $t \notin B_S(e,4(k-1)M)$.  This finishes the proof of equality~\eqref{eqn:power!}.
	\end{proof}
	
\medskip

We will now make use of the spectral calculus introduced in Section \ref{secprelim:calculus}. 
Note that for a polynomial $p \in \mathbb{R}[x]$, it is an immediate consequence from Lemma~\ref{lem:powers} that for an induced approximation $\big(\{H_n^{\rho}\}\big)$  for $\{H^{\omega}\}$ with parameter $M$ as above, 
$\big(\{p(H_n^{\rho})\} \big)$ is a  weak approximation for $\{p(H^{\omega})\}$ with parameter $\mathrm{deg}(p)\cdot M$.
This leads to the first step in weak convergence along polynomials.

\begin{lem}\label{lemma-induced} 
	Let $\big\{ H_{\omega} \big\}_{\omega \in \mathcal{X}}$ be a continuous operator over a Cayley graph $(V,E) = \mathrm{Cay}(G,S)$, where $\mathcal{X}$ is a compact metric space. Let $(V_n,\sigma_n)$ be a sofic approximation. Assume that one of the following holds
	\begin{itemize}
			\item all maps $\sigma_n: G \to \mathrm{Sym}(V_n)$ are homomorphisms;
	\item the continuous operator $\{H^{\omega}\}$ is of bounded interaction.
	\end{itemize}
	Then for every  induced approximation $\big(\{H_n^{\rho}\}_{\rho \in \mathcal{X}^{V_n}}\big)_n$  for $\{H^\omega\}$ and for all $p \in \R[x]$, we have
\[
\lim_{n \to \infty} \sup_{\rho \in \mathcal{X}^{V_n}} \frac{\big| v \in V_n:\, p(H_n^{\rho})(v,v) =  p(H^{\Pi_v^{\sigma_n}(\rho)})(e,e)  \big|}{|V_n|} = 1.
\]
\end{lem}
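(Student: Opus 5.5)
The plan is to reduce the statement to the diagonal entries of the powers $(H_n^{\rho})^k$, using Lemma~\ref{prop:inducedpowers}, and then to count good vertices. Write $p=\sum_{k=0}^{K} a_k x^k$ with $K=\mathrm{deg}(p)$. By linearity,
\[
p(H_n^{\rho})(v,v)=\sum_{k=0}^K a_k (H_n^{\rho})^k(v,v), \qquad p(H^{\omega})(e,e)=\sum_{k=0}^K a_k (H^{\omega})^k(e,e).
\]
The $k=0$ terms agree trivially, both being equal to $1$. It therefore suffices to find a set of vertices, independent of $\rho$, on which $(H_n^{\rho})^k(v,v)=(H^{\Pi_v^{\sigma_n}(\rho)})^k(e,e)$ holds simultaneously for all $1\le k\le K$, and whose proportion in $V_n$ tends to $1$.

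The key input is Lemma~\ref{prop:inducedpowers}. Under either of the two standing assumptions, it tells us that $(\{(H_n^{\rho})^k\})_n$ is a weak approximation for $\{(H^{\omega})^k\}$ with parameter $kM$. The defining fourth property of a weak approximation then gives, for every $4kM$-good vertex $w\in V_n$ and every $\rho\in\mathcal{X}^{V_n}$,
\[
(H_n^{\rho})^k(w,v)=(H^{\Pi_w^{\sigma_n}(\rho)})^k(e,g),
\]
where $v=\sigma_n^g(w)$ and $|g|_S\le kM$. Taking $v=w$ and $g=e$ yields exactly the diagonal identity we need. Every $4KM$-good vertex is $4kM$-good for all $k\le K$. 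Hence the set
\[
V_n^{(K)}:=\{v\in V_n:\, v \text{ is } 4KM\text{-good}\}
\]
is contained in $\{v:\, p(H_n^\rho)(v,v)=p(H^{\Pi_v^{\sigma_n}(\rho)})(e,e)\}$ for every $\rho$. The crucial feature here is that $V_n^{(K)}$ depends only on the graph $(V_n,E_n)$ and not on $\rho$.

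Finally, since $(V_n,\sigma_n)$ is a sofic approximation, the discussion after Definition~\ref{def-sofic-approx} shows that for every $R>0$ the proportion of $R$-good vertices tends to $1$. Applying this with $R=4KM$, we obtain
\[
\inf_\rho \frac{|\{v:\, p(H_n^\rho)(v,v)=p(H^{\Pi_v^{\sigma_n}(\rho)})(e,e)\}|}{|V_n|}\ \ge\ \frac{|V_n^{(K)}|}{|V_n|}\longrightarrow 1,
\]
and since the ratio is at most $1$, the supremum over $\rho$ (indeed even the infimum) tends to $1$.

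I do not expect a genuine obstacle, because the heavy lifting sits in Lemma~\ref{prop:inducedpowers}. The one point to check carefully is that the fourth condition of a weak approximation for the $k$-th power is applied with the correct goodness radius $4kM$, and that the base case $k=1$ uses the induced approximation itself. Beyond that, one should also confirm that the edge-labelled graphs built from $\sigma_n$ indeed have asymptotically full proportions of $R$-good vertices, which the paper asserts right after Definition~\ref{def-sofic-approx}.
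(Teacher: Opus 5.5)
Your proposal is correct and is essentially the paper's argument. The paper likewise uses Lemma~\ref{prop:inducedpowers} to get $p(H_n^{\rho})(v,v)=p(H^{\Pi_v^{\sigma_n}(\rho)})(e,e)$ at every $4\deg(p)M$-good vertex for all $\rho$, and then uses that the proportion of these $\rho$-independent good vertices tends to $1$; the only cosmetic difference is that you expand $p$ into monomials and intersect over $k\le \deg(p)$, whereas the paper applies the weak-approximation property to $p(H)$ directly.
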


\begin{proof}
	Suppose that $\{H^{\omega}\}$ has parameter $M$. Fix $p \in \R[x]$. 
	It is immediate from Lemma~\ref{lem:powers} that $\{p(H^{\omega})\}$ is a continuous operator with parameter $\deg(p)M$. Consequently, $\big(\{p(H_n^{\rho})\}\big)$ is a weak approximation for $\{p(H^{\omega})\}$ by 	Lemma~\ref{prop:inducedpowers}, and we clearly have boundedness of  $p(H_n^{\rho})$ uniformly both in $n$ and $\rho$. 
	Hence, $$p(H_n^{\rho})(v,v) = p(H^{\Pi_v^{\sigma_n}(\rho)})(e,e)$$ 
	for every $4\mathrm{deg}(p)M$-good vertex $v \in V_n$ and for each $\rho \in \mathcal{X}^{V_n}$. Since $(V_n,E_n,\sigma_n)$ is a sofic approximation, the ratio of $4\mathrm{deg}(p)M$-good vertices in $V_n$ converges to $1$ as $n \to \infty$. This concludes the proof. 
\end{proof}

In the following we write  $C(\R)$ for the space of  continuous real-valued functions on $\R$.

\begin{cor}\label{cor-av-eq-id}
With the assumptions of the previous lemma, for all $f \in C(\R)$, we have  
\[
\lim_{n \to \infty} \sup_{\rho \in \mathcal{X}^{V_n}} \left|
\frac{1}{|V_n|}\sum_{v\in V_n}\Big(\langle \delta_v,f(H_n^\rho)\delta_v\rangle-\langle \delta_{e}, f(H^{\Pi_v^{\sigma_n}(\rho)})\delta_{e}\rangle\Big) \right| = 0.\]
\end{cor}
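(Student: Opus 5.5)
The plan is to pass from polynomials to continuous functions by Weierstrass approximation on a compact interval containing all relevant spectra. By Remark~\ref{rem:bounded} there is $K>0$ with $\|H_n^{\rho}\|\le K$ and $\|H^{\omega}\|\le K$ for all $n$, $\rho\in\mathcal{X}^{V_n}$ and $\omega\in\mathcal{X}^G$. Hence all spectra lie in $I:=[-K,K]$. By the spectral calculus of Section~\ref{secprelim:calculus}, $\|f(H)-p(H)\|=\|(f-p)\rvert_{\Sigma(H)}\|_\infty\le \|f-p\|_{\infty,I}$ for each such operator $H$ and each $p\in\R[x]$.

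Fix $f\in C(\R)$ and $\varepsilon>0$. Choose $p\in\R[x]$ with $\sup_{\lambda\in I}|f(\lambda)-p(\lambda)|<\varepsilon$. Split the averaged difference as
\begin{align*}
&\frac{1}{|V_n|}\sum_{v}\Big(\langle\delta_v,(f-p)(H_n^\rho)\delta_v\rangle\Big) + \frac{1}{|V_n|}\sum_v\Big(p(H_n^\rho)(v,v)-p(H^{\Pi_v^{\sigma_n}(\rho)})(e,e)\Big)\\
&\quad - \frac{1}{|V_n|}\sum_v\langle\delta_e,(f-p)(H^{\Pi_v^{\sigma_n}(\rho)})\delta_e\rangle .
\end{align*}
The first and third terms are each bounded by $\varepsilon$, uniformly in $n$ and $\rho$, since $|\langle\delta,T\delta\rangle|\le\|T\|$ for unit vectors.

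For the middle term we use Lemma~\ref{lemma-induced}. Let $B_n(\rho)$ denote the set of vertices where the two diagonal entries differ. Then $\sup_\rho |B_n(\rho)|/|V_n|\to0$. On $B_n(\rho)$, each summand is bounded by $2\sup_{\lambda\in I}|p(\lambda)|$, again by the operator norm bound. So the middle term is at most $2\|p\|_{\infty,I}\sup_\rho|B_n(\rho)|/|V_n|$, which tends to $0$ uniformly in $\rho$. Taking $\limsup_n$ gives a bound of $2\varepsilon$, and since $\varepsilon$ was arbitrary the claim follows.

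The main point to check carefully is the uniformity in $\rho$. This is exactly what Lemma~\ref{lemma-induced} provides through the $\sup_\rho$ inside the limit: the good set of $4\deg(p)M$-good vertices does not depend on $\rho$. It is combined with the uniform norm bound from Remark~\ref{rem:bounded}, which lets a single polynomial $p$ work for all $n$, $\rho$ and $\omega$ simultaneously. No further obstacle is expected.
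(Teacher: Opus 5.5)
Your proof is correct and follows essentially the same route as the paper. Both arguments combine the uniform operator-norm bounds from Remark~\ref{rem:bounded}, the exact agreement of polynomial diagonal entries at the $4\deg(p)M$-good vertices (Lemma~\ref{lemma-induced}), and Weierstrass approximation on a common compact interval containing all spectra; your three-term splitting simply writes out the approximation step that the paper leaves implicit. One small correction to your last paragraph: the uniformity in $\rho$ comes from the fact, shown in the \emph{proof} of Lemma~\ref{lemma-induced}, that the set of good vertices does not depend on $\rho$. It does not come from the $\sup_\rho$ in the lemma's literal statement, which would only control the bad set for some $\rho$, not for all $\rho$.
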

\begin{proof}
	The uniform bound in operator norm of $\big(\{H_n^{\rho}\}_{\rho \in \mathcal{X}^{V_n}}\big)_n$ together with the spectral calculus gives $\sup_{n \in \N} \sup_{\rho \in \mathcal{X}^{V_n}} \|f(H_n^{\rho})\| < \infty$. Similarly, one has $\sup_{\omega \in \mathcal{X}^G}\|f(H^{\omega})\| < \infty$. For $f=p$ being a polynomial, the claim now follows from Lemma~\ref{lemma-induced}. For general $f$, the above boundedness properties together with the continuous spectral calculus imply that one can with no loss of generality assume that $f$ is compactly supported. The proof is concluded by Lemma~\ref{lemma-induced} together with the Stone-Weierstra{\ss} theorem on uniform densenss of polynomials in the space of continuous functions on a compact set. 
\end{proof}

\subsection{Weak convergence of the density of states measure} \label{sec:weakconv}
Let $\{H^{\omega}\}_{\omega \in \mathcal{X}^V}$ be a {\em measurable family} of operators over (some edge labeled bounded graph) $(V,E)$, i.e.\@ all  maps 
\[
\omega \mapsto H^{\omega}(v,w), \quad v,w \in V
\]
are Borel measurable. We define a Borel probability measure $\vartheta^{\omega}$ on $\R$  for each $\omega \in \mathcal{X}^V$ as follows, depending if $V$ is finite or infinite. Namely, if $V$ is finite we set 
\begin{align} \label{eqn:tracemeasure}
\vartheta^\omega(B) := \frac{1}{|V_n|} \mathrm{Tr} E_{H^{\omega}(B)} = \frac{1}{|V_n|}\sum_{v\in V_n} \langle\delta_v, E_{H^\omega}(B)\delta_v\rangle,
\end{align}
where $\mathrm{Tr}$ denotes the standard trace for quadratic matrices. 
In the situation where $V$ is infinite, we set 
\[
\vartheta^{\omega}(B) = \langle \delta_e,\, E_{H^{\omega}}(B)\delta_e \rangle.
\]
In both cases, $E_{H^{\omega}}$ denotes the spectral measure for the self-adjoint operator $H^{\omega}$, cf.\@
Section~\ref{secprelim:calculus}.
Note that by boundedness of $H^{\omega}$, $\vartheta^{\omega}$ is supported on the spectrum $\Sigma(H^{\omega})$ of $H^{\omega}$, a compact set. 

\medskip

For a measurable function $f:\R \to \R$ with bounded restrictions on compact sets, we set
\[
\vartheta^{\omega}(f) := \int_{\R} f(s)\, d \vartheta^{\omega}(s).
\]
Note that $C(\R)$ belongs to this class. We denote by $C_b(\R)$ the subspace of all bounded elements in $C(\R)$.

\begin{defn}[Integrated density of states]
	The function 
	\begin{align*}
		N:\R \to [0,1], \quad N(\beta) = \int_{\mathcal{X}^V} \vartheta^{\omega}\big( ]-\infty, \beta] \big)\, d\mu(\omega)
	\end{align*}
	is called the {\em integrated density of states (IDS)} of the underlying measurable family $\{H^{\omega}\}$ with measure $\mu$.  The {\em density of states measure} of $\{H^{\omega}\}$ is given by 
	\[
	N^{\prime}: C_b(\R) \to \R, \quad N^{\prime}(f) = \int_{\mathcal{X}^V} \vartheta^{\omega}(f)\, d\mu(\omega) = \int_{\mathcal{X}^V} \int_{\R} f(s)\, d\vartheta^{\omega}(s) \, d\mu(\omega).
	\]
\end{defn}

 \medskip 
 In mathematical physics, $N(\beta)$ is usually interpreted as an averaged number of quantum states below a fixed energy level $\beta$, per unit value, see the explanations in the introduction. 
There are natural relations to a certain von Neumann algebra carrying a faithful, normal and finite trace. We will explore these connections in Section~\ref{sec:monotoneOP} below.

\begin{rem}
	Note that for a finite graph $(V,E)$ and $\beta \in \N$, the number $|V| \cdot \vartheta^{\omega} \big( ]-\infty,\beta] \big)$ corresponds to the number of eigenvalues of $H^{\omega}$ less or equal than $\beta$, counted with their multiplicities. Further, it follows from the formula~\eqref{eqn:powers} that for every $p \in \R[x]$, the maps $\omega \mapsto p(H^{\omega})(x,y)$ are continuous.
	Combining this with the continuous and measurable calculus, one observes that the aforementioned maps $\omega \mapsto \vartheta^{\omega}(\cdot)$ are measurable. Hence, the above integrals are well-defined.  
\end{rem}

 We are now able to establish weak convergence of the density of states measure. 

 \begin{thm}[Weak convergence of the density of states measure]\label{lemma-weakconvergence}
	Let $(V_n,\sigma_n,\mu_n)$ be an ergodically sofic model for a $G$-process $(G,\mathcal{X}
 ,\mu)$, and $\{H^{\omega}\}_{\omega \in \mathcal{X}^G}$ be a continuous operator over a Cayley graph $\mathrm{Cay}(G,S)$. Suppose that at least one of the following holds:
	\begin{itemize}
		\item all maps $\sigma_n: G \to \mathrm{Sym}(V_n)$ are homomorphisms;
		\item the continuous operator $\{H^{\omega}\}$ is of bounded interaction.
	\end{itemize}
	 Then, for every induced approximation $ (\{H_n^{\rho}\})$  for $\{H^{\omega}\}$ we have: for every $\varepsilon> 0$ and for each $f \in C_b(\R)$, 
\[
\lim_{n \to \infty} \mu_n \left( \left\{ \rho \in \mathcal{X}^{V_n}:\, \big| \vartheta_n^{\rho}(f) - N^{\prime}(f) \big| < \varepsilon  \right\} \right) = 1.
\]
\end{thm}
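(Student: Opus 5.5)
Fix $f\in C_b(\R)$ and $\varepsilon>0$. The plan is to split $\vartheta_n^\rho(f)-N'(f)$ into a deterministic \emph{soficity} error and an \emph{ergodic} error. The soficity error is controlled by Corollary~\ref{cor-av-eq-id}. The ergodic error is controlled by le-convergence, applied to one continuous function on $\mathcal{X}^G$.

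Concretely, I write
\[
\vartheta_n^\rho(f)-N'(f)=\Big(\tfrac{1}{|V_n|}\sum_{v\in V_n}\langle\delta_v,f(H_n^\rho)\delta_v\rangle-\tfrac{1}{|V_n|}\sum_{v\in V_n}\Phi_f(\Pi_v^{\sigma_n}(\rho))\Big)+\Big(\tfrac{1}{|V_n|}\sum_{v\in V_n}\Phi_f(\Pi_v^{\sigma_n}(\rho))-\int_{\mathcal{X}^G}\Phi_f\,d\mu\Big),
\]
where $\Phi_f(\omega):=\langle\delta_e,f(H^\omega)\delta_e\rangle=\vartheta^\omega(f)$. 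I also note that $\int\Phi_f\,d\mu=N'(f)$ by definition. By Corollary~\ref{cor-av-eq-id}, the first bracket tends to $0$ uniformly in $\rho\in\mathcal{X}^{V_n}$, so it is below $\varepsilon/2$ for all $\rho$ once $n$ is large. The corollary applies to $f\in C(\R)$ under exactly the standing alternative hypotheses (homomorphisms, or bounded interaction).

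For the second bracket I first show that $\Phi_f$ is continuous on $\mathcal{X}^G$. By Remark~\ref{rem:bounded} all spectra lie in a fixed compact interval $[-K,K]$. On that interval, Stone--Weierstra{\ss} gives polynomials $p_j$ with $\|f-p_j\|_{\infty,[-K,K]}\to0$, hence $\Phi_{p_j}\to\Phi_f$ uniformly in $\omega$ by the norm identity of the spectral calculus. Each $\Phi_{p}(\omega)=p(H^\omega)(e,e)$ is a finite sum of finite products of the continuous maps $\omega\mapsto H^\omega(x,y)$, by formula~\eqref{eqn:powers} and the finite hopping range. It is therefore continuous, and so $\Phi_f$ is continuous as a uniform limit. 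Since $\mathcal{X}^G$ is compact, $\Phi_f$ is bounded continuous.

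Now I use the assumption that $(V_n,\sigma_n,\mu_n)$ is an ergodically sofic model, i.e.\ $\mu_n\overset{le}{\to}\mu$. The second property in the remark following the definition of le-convergence, applied to $\Phi_f$, gives that the $\mu_n$-probability of the set where the second bracket has modulus $<\varepsilon/2$ tends to $1$. Intersecting with the deterministic bound on the first bracket, the triangle inequality yields the claim.

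The main (mild) obstacle is justifying the continuity of $\Phi_f$ for non-polynomial $f$, together with the uniform-in-$\rho$ reduction. Both rest on the uniform operator-norm bound, and I would handle them as above via polynomial approximation on the common spectral interval. Measurability of $\rho\mapsto\vartheta_n^\rho(f)$ is guaranteed by the remark preceding the theorem, so the events in question are measurable.
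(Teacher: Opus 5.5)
Your proof is correct and follows essentially the same route as the paper. Like the paper, you split the error into a deterministic soficity part, controlled uniformly in $\rho$ by Corollary~\ref{cor-av-eq-id}, and an ergodic part, controlled by le-convergence applied to the continuous function $\omega\mapsto\langle\delta_e,f(H^\omega)\delta_e\rangle$. The only cosmetic difference is the order of steps: the paper first reduces to polynomials via Weierstra{\ss} on a common compact spectral set, whereas you use the same polynomial approximation to prove continuity of $\Phi_f$ for general $f$ and then apply le-convergence to $\Phi_f$ directly.
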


\begin{proof} 
	Recall that by Remark~\ref{rem:bounded} that 
	\[
	\sup_{n \in \N} \sup_{\rho \in \mathcal{X}^{V_n}} \|H^{\rho}_n\| < \infty, \quad \mbox{ and } \quad \sup_{\omega \in \mathcal{X}^G} \|H^{\omega}\| < \infty.
	\]
	Together with the spectral theory of bounded self-adjoint operators, this yields that there is a compact set $\Sigma \subseteq \R$ containing the supports of the measures $\vartheta^{\rho}_n$ and $\vartheta^{\omega}$. By the Weierstra{\ss} approximation theorem, it is enough to verify the claim for every $p \in \R[x]$, where we identify $p$ with some element in $C_b(\R)$ coinciding with $p$ on $\Sigma$ and being continued constantly outside of $\Sigma$. Via  Lemma~\ref{lem:powers}, it can be seen that the map $\omega \mapsto \langle \delta_e,\, p(H^{\omega})\delta_e \rangle$ is continuous. Local and empirical convergence of $(\mu_n)$ implies that for every $\varepsilon > 0$, the sequence of numbers
\[\mu_n\left(\left\{\rho\in\mathcal{X}^{V_n}\, :\, \Big| \frac{1}{|V_n|}\sum_{v\in V_n} \langle\delta_{e},p(H^{\Pi_v^{\sigma_n}(\rho)})\delta_{e}\rangle-\int_{\mathcal{X}^G} \langle\delta_{e},p(H^\omega)\delta_{e}\rangle d\mu(\omega) \Big| <\epsilon\right\}\right)\]
converges to $1$ as $n \to \infty$. 
Combining this with Corollary \ref{cor-av-eq-id} immediately yields convergence of 
\begin{align*}
	\mu_n \left( \left\{ \rho \in \mathcal{X}^{V_n}\,:\, \Bigg| \frac{1}{|V_n|}\sum_{v\in V_n} \langle\delta_{e},p(H^{\Pi_v^{\sigma_n}(\rho)})\delta_{e}\rangle - N^{\prime}(p) \Bigg| < \varepsilon \right\} \right)
\end{align*}
to $1$, as claimed. 
\end{proof}

\begin{cor}   \label{lemma-weakconvergence-consequence}
	Under the assumptions of the previous theorem, there is a sequence of sets $A_n \subseteq \mathcal{X}^{V_n}$ with $\lim_{n \to \infty} \mu_n(A_n) = 1$ such that for every sequence $(\rho_n)$ with $\rho_n \in A_n$, the  probability measures $\vartheta_{n}^{\rho_{n}}$ converge weakly to $N^{\prime}$, i.e.\@
	\begin{align*}
		\lim_{n \to \infty} \vartheta_{n}^{\rho_{n}}(f) = N^{\prime}(f), \quad f \in C_b(\R). 
	\end{align*}
\end{cor}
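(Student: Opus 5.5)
The plan is a diagonal extraction over a countable convergence-determining family of test functions, with the set $A_n$ built from the good sets supplied by Theorem~\ref{lemma-weakconvergence}. By Remark~\ref{rem:bounded} there is a compact interval $\Sigma=[-K,K]$ containing the supports of all $\vartheta_n^{\rho}$ and of all $\vartheta^{\omega}$, hence also of $N'$. For testing weak convergence of probability measures supported in $\Sigma$ it therefore suffices to use a countable family $\{p_j\}_{j\in\N}$ of polynomials with rational coefficients. Each $p_j$ is extended constantly outside $\Sigma$, as in the proof of Theorem~\ref{lemma-weakconvergence}, so that it lies in $C_b(\R)$. Given $f\in C_b(\R)$ and $\delta>0$, Weierstra{\ss} gives some $j$ with $\sup_{\Sigma}|f-p_j|<\delta$. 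Then
\[
|\vartheta_n^{\rho}(f)-N'(f)|\le 2\delta+|\vartheta_n^{\rho}(p_j)-N'(p_j)|,
\]
because all the measures involved are probability measures supported on $\Sigma$.

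For the construction, for $j,k\in\N$ set
\[
B_n^{j,k}:=\big\{\rho\in\mathcal{X}^{V_n}:\ |\vartheta_n^{\rho}(p_j)-N'(p_j)|<1/k\big\}.
\]
Theorem~\ref{lemma-weakconvergence} gives $\mu_n(B_n^{j,k})\to1$ as $n\to\infty$ for each fixed pair $(j,k)$. Put $C_n^{m}:=\bigcap_{j\le m} B_n^{j,m}$. This is a finite intersection, so $\mu_n(C_n^m)\to 1$ for each fixed $m$. Choose increasing integers $N_1<N_2<\dots$ such that $\mu_n(C_n^m)\ge 1-1/m$ for all $n\ge N_m$. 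Then define $m(n):=\max\{m: N_m\le n\}$ (with $m(n)=0$ and $A_n=\mathcal{X}^{V_n}$ for $n<N_1$), and set $A_n:=C_n^{m(n)}$.

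This gives $\mu_n(A_n)\ge 1-1/m(n)$, and $m(n)\to\infty$, so $\mu_n(A_n)\to1$. Now take any sequence with $\rho_n\in A_n$ and fix $j$. For all $n$ with $m(n)\ge j$ we have $|\vartheta_n^{\rho_n}(p_j)-N'(p_j)|<1/m(n)\to0$. The approximation estimate above then yields $\vartheta_n^{\rho_n}(f)\to N'(f)$ for every $f\in C_b(\R)$.

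The only delicate point is the diagonal bookkeeping. The index $m(n)$ must grow slowly enough that each $C_n^{m(n)}$ still has large measure, and the set $A_n$ must be defined independently of the later choice of $\rho_n$. Monotonicity of the $N_m$ handles both requirements.
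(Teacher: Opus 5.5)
Your proposal is correct and is essentially the paper's proof: the same diagonal construction $A_n := C_n^{m(n)}$ (the paper's $A_{n,l}$ for $n\in[N_l,N_{l+1})$) over a countable dense family in $C(\Sigma)$, built from Theorem~\ref{lemma-weakconvergence} via a union bound over finitely many test functions, and finished by uniform approximation on $\Sigma$. The differences are cosmetic: you use rational-coefficient polynomials instead of an abstract countable dense set, the error budget $1/m$ instead of $2^{-l}$, and you write out the final $2\delta$-approximation estimate that the paper leaves implicit.
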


\begin{proof}
	Similarly as before, we can assume that the measures $\vartheta^{\rho}_n$ and $N^{\prime}$ are supported in one and the same compact set $\Sigma \subseteq \R$. Further, the space $C(\Sigma)$ of continuous real-valued functions on $\Sigma$ is separable by the Stone-Weierstra{\ss} theorem. Consequently we can fix a countable subset $S=\{p_l:l \in \N\}$ that is dense in $C(\Sigma)$ with respect to the supremum norm. We fix an arbitrary enumeration $\N \to S, i \mapsto p_i$ of $S$.
	
	\medskip
	
	{\bf Claim.} For each $l \in \N$ there is some $N_l \in \N$ such that $\mu_n(A_{n,l}) > 1 - 2^{-l}$ for $n \geq N_l$, where 
	\[
	A_{n,l} := \big\{ \rho \in \mathcal{X}^{V_n}:\, |\vartheta_n^{\rho}(p_i) - N^{\prime}(p_i) | < 1/l   \mbox{ for all } 1 \leq i \leq l \big\}.
	\]
	For the proof of the claim, set for $1 \leq i \leq l$
	\[
	A_{n,l}^{(i)} := \big\{ \rho \in \mathcal{X}^{V_n}:\, |\vartheta_n^{\rho}(p_i) - N^{\prime}(p_i) | < 1/l \big\}.
	\]
	Applying the previous lemma on all $p_i$, $1 \leq i \leq l$, there is some $N_l$ such that $\mu_n(A_{n,l}^{(i)}) > 1 - 2^{-l}/l$ for all $n \geq N_l$ and all $1 \leq i \leq l$. With $A_{n,l} = \bigcap_{i=1}^l A_{n,l}^{(i)}$ we find that 
	\[
	\mu(A_{n,l}) = 1 - \mu\big( \bigcup_{i=1}^l (A_{n,l}^{(i)})^c \big) \geq 1 - l \cdot 2^{-1}/l = 1 - 2^{-l}.
	\]
	We now use the claim. To this end, we can assume with no loss of generality that $N_{l+1} > N_l$ for all $l \in \N$. Now the sequence $(A_n)$ can be defined for $n \geq N_1$ by setting $A_n := A_{n,l}$, where $l \in \N$ is the unique integer such that $n \in [N_l, N_{l+1})$. (For $n < N_1$, we can simply pick any non-empty measurable subset $A_n \subseteq \mathcal{X}^{V_n}$.) Then $$\liminf_{n \to \infty} \mu_n(A_n) \geq \liminf_{l \to \infty} (1-2^{-l}) = 1.$$
	 Further, by definition of the sets $A_n$ and $A_{n,l}$, for a sequence $(\rho_n)$ with $\rho_n \in A_n$, we have
	\[
	\limsup_{n \to \infty} \big| \vartheta_n^{\rho_n}(p_i) - N^{\prime}(p_i) \big| < 1/l
	\]
	for all $l \in \N$ and $p_i \in S$ with $i \leq l$. This shows that $\lim_{n} \vartheta_n^{\rho_n}(p) = N^{\prime}(p)$ for all $p \in S$. Since $S$ is dense in $C(\Sigma)$, we obtain the desired assertion by a straightforward approximation argument.
\end{proof}

\section{A L\"uck type approximation theorem for local and empirical convergence} \label{sec:luck}

We now prove a L\"uck type approximation theorem asserting pointwise convergence of the spectral measures, and thus uniform convergence of the integrated density of states, see Theorem~\ref{thm:MAIN} and Corollary~\ref{cor:rationalvalues}.

\medskip

We say that a bounded operator $H: \ell^2(V) \to \ell^2(V)$ over a countable set $V$ {\em has integer coefficients} if $H(x,y) \in \mathbb{Z} + i \mathbb{Z}$ for all $x,y \in V$.

\medskip

The following lemma can be found in {\cite{abert-thom-virag}*{Lemma~9}}.  (In the latter paper, the authors deal with  slightly different operators, but the arguments of the proof carry over verbatim.) 
Similar estimates of the spectral measure of graph Laplacians and their local log-H\"older continuity property have been obtained also before, see e.g.\@ \cites{CS83,Luc94,Fab98,MY02,Ves05,thom}.


\begin{lem}\label{lemma-estimate}
Let $R \geq 1$. Then for every $\alpha \in [-R,R]$,  there exists a null sequence $(\varepsilon_k)$ of positive numbers such that for every finite set $V$ and for every self-adjoint linear operator $H:\ell^2(V) \to \ell^2(V)$  
that has integer
coefficients  and satisfies $\|H\| \leq R$, one has
\[
 \vartheta\big(]\alpha-\varepsilon_k,\alpha+\varepsilon_k[\setminus \{\alpha\}\big) \le \left(\frac{\log(2R)}{\log(\frac{1}{2\varepsilon_k})}+\frac{1}{k}\right)^{\frac{1}{2}},\]
where $\vartheta$ is defined according to~\eqref{eqn:tracemeasure}. 
\end{lem}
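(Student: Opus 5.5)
The plan is to work entirely with the characteristic polynomial $\chi_H(x)=\det(x-H)$ and to use algebraic facts about its roots. Since $H$ has entries in $\Z+i\Z$, $\chi_H$ is monic with coefficients in $\Z[i]$. Since $H$ is self-adjoint, these coefficients are also real, so $\chi_H\in\Z[x]$. We factor $\chi_H=\prod_j P_j^{m_j}$ into distinct monic irreducible factors in $\Z[x]$. Every eigenvalue is then an algebraic integer, and every conjugate of it is an eigenvalue of the same multiplicity. All these roots lie in $[-R,R]$ because $\|H\|\le R$. Also $\vartheta(\{\lambda\})=\mathrm{mult}(\lambda)/|V|$, so $\vartheta$ of any set is a convex combination over the factors $P_j$ of the proportion of roots of $P_j$ lying in that set, with weights $m_j\deg(P_j)/|V|$.

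\emph{Choice of $\varepsilon_k$.} Fix $k$. A monic integer polynomial of degree $d<k$ with all roots in $[-R,R]$ has coefficients bounded by $\binom{d}{i}R^{d-i}$. Hence there are only finitely many such polynomials, and so only finitely many algebraic integers of degree $<k$ whose conjugates all lie in $[-R,R]$. We choose $\varepsilon_k>0$ so small that $]\alpha-\varepsilon_k,\alpha+\varepsilon_k[$ contains none of these numbers except possibly $\alpha$ itself. We may also take $\varepsilon_k<\min(\varepsilon_{k-1},1/k)$, so that $(\varepsilon_k)$ is a null sequence. This choice depends only on $\alpha,R,k$ and not on $V$ or $H$. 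Consequently, every eigenvalue of $H$ in the punctured interval $I_k=\,]\alpha-\varepsilon_k,\alpha+\varepsilon_k[\setminus\{\alpha\}$ is a root of some factor $P_j$ of degree $d_j\ge k$.

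\emph{Discriminant estimate (main step).} Take an irreducible factor $P$ of degree $d\ge k$, with roots $\theta_1,\dots,\theta_d$. These roots are distinct because $P$ is irreducible over $\Q$, so the discriminant $\prod_{i\ne j}(\theta_i-\theta_j)$ is a nonzero integer and therefore has absolute value at least $1$. Suppose $m$ of the roots lie in $I_k$. The at most $m(m-1)$ ordered pairs inside $I_k$ contribute factors of size $<2\varepsilon_k$, and all other factors are bounded by $2R$. This gives $(2\varepsilon_k)^{m(m-1)}(2R)^{d^2}\ge 1$, that is,
\[
\Big(\frac{m}{d}\Big)^2\le \frac{\log(2R)}{\log(1/(2\varepsilon_k))}+\frac{m}{d^2}\le \frac{\log(2R)}{\log(1/(2\varepsilon_k))}+\frac1k .
\]
So the proportion of roots of each relevant $P_j$ in $I_k$ is at most the claimed square root. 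For the factors of degree $<k$ this proportion is $0$ by the choice of $\varepsilon_k$.

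\emph{Conclusion.} As noted above, $\vartheta(I_k)$ is a convex combination, over the factors $P_j$, of the fractions of roots of $P_j$ in $I_k$. Each of these fractions is bounded by the same square root, so $\vartheta(I_k)$ is bounded by it as well, which is the assertion of the lemma. The only delicate point is the discriminant step. The rest is bookkeeping: the finiteness argument that produces $\varepsilon_k$ independently of $H$, and checking that the self-adjointness of $H$ really places $\chi_H$ in $\Z[x]$ rather than merely in $\Z[i][x]$.
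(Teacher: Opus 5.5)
Your proposal is correct and is essentially the proof of Lemma~9 in \cite{abert-thom-virag}, to which the paper defers: you choose $\varepsilon_k$ to avoid the finitely many algebraic integers (other than $\alpha$) of degree $<k$ with all conjugates in $[-R,R]$, and you bound the fraction of roots of each irreducible factor of degree $\geq k$ lying in the punctured interval using that its discriminant is a nonzero integer. The only cosmetic fix is to require $\varepsilon_k<\frac{1}{2k}$ instead of $\varepsilon_k<\frac1k$, so that $\log\bigl(\frac{1}{2\varepsilon_k}\bigr)>0$ also holds for $k=1$, since you divide by this quantity.
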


\begin{proof}
See the proof of Lemma~9 in \cite{abert-thom-virag}. 
\end{proof}

We are now in position to prove the main theorem of this section.

\begin{thm}[Uniform convergence of the IDS] \label{thm:MAIN}
    Let $G$ be a finitely generated group along with an ergodic process $(G,\mathcal{X},\mu)$, and let 
	 $(V_n,\sigma_n,\mu_n)$ be a sofic model. Let $\{H^{\omega}\}$ be a continuous operator    over $G$ that has integer coefficients. Then for every induced approximation $(\{H_n^{\rho}\})_n$, the following holds:   for all $\varepsilon > 0$ and each $\alpha \in \R$, 
	 \begin{align*}
	 	\lim_{n \to \infty} \mu_n \big( \big\{ \rho \in \mathcal{X}^{V_n}:\, |\vartheta_n^{\rho}(\{\alpha\}) - N^{\prime}(\{\alpha\})| < \varepsilon \big\} \big) = 1. 
	 \end{align*}
 In particular, there is a sequence $(A_n)$ with $A_n \subseteq \mathcal{X}^{V_n}$ and $\lim_n \mu_n(A_n) = 1$ such that for every sequence $(\rho_n)$ with $\rho_n \in A_n$, we obtain 
 \begin{align} \label{eqn:IDSconv}
	\lim_{n \to \infty} \sup_{\beta \in \R} \left| \frac{N_n^{\rho_n}(\beta)}{|V_n|} - N(\beta) \right| = 0,
\end{align}
where $N_n^{\rho}(\beta) = |V_n| \cdot \vartheta^{\rho}_n \big( ]-\infty,\beta] \big)$ is the number of eigenvalues of $H^{\rho}_n$ less or equal than $\beta$, counted with their multiplicities. 
\end{thm}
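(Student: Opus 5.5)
We follow the Abért--Thom--Virág strategy: combine the weak convergence of Theorem~\ref{lemma-weakconvergence} with the uniform log-Hölder estimate of Lemma~\ref{lemma-estimate} to upgrade weak convergence to convergence of atoms. Throughout, $R$ is a common bound for $\|H_n^\rho\|$ and $\|H^\omega\|$ (Remark~\ref{rem:bounded}), and $\Sigma=[-R,R]$. Since all coefficients of $H^\omega$ lie in $\Z+i\Z$ and an induced approximation takes only values of $H^\omega$ or $0$, every $H_n^\rho$ has integer coefficients. Hence Lemma~\ref{lemma-estimate} applies to all $\vartheta_n^\rho$, uniformly in $n$ and $\rho$. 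The model is ergodic, so by Proposition~\ref{prop:leergodic} it is le-convergent and Theorem~\ref{lemma-weakconvergence} is available. For the bounded-interaction hypothesis there, we use Remark~\ref{rem:boundedinteraction}: integer coefficients of bounded modulus take finitely many values, so after enlarging $M$ the operator is of bounded interaction.

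\emph{Step 1: upper bound on the atom.} Fix $\alpha$ and $\varepsilon>0$. Choose a continuous $f_\delta$ with $0\le f_\delta\le 1$, $f_\delta(\alpha)=1$, and support in $]\alpha-\delta,\alpha+\delta[$. Then $\vartheta_n^\rho(\{\alpha\})\le \vartheta_n^\rho(f_\delta)$, and $N'(f_\delta)\downarrow N'(\{\alpha\})$ as $\delta\to0$. Pick $\delta$ with $N'(f_\delta)<N'(\{\alpha\})+\varepsilon/2$. By Theorem~\ref{lemma-weakconvergence}, with $\mu_n$-probability tending to $1$ we get $\vartheta_n^\rho(\{\alpha\})<N'(\{\alpha\})+\varepsilon$.

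\emph{Step 2: lower bound on the atom.} This is the main step, and it is where Lemma~\ref{lemma-estimate} enters. Take $k$ large and $\delta=\varepsilon_k$ from the lemma, small enough that the right-hand side is $<\varepsilon/3$ and $N'(f_\delta)<N'(\{\alpha\})+\varepsilon/3$. Then, deterministically,
\[
\vartheta_n^\rho(\{\alpha\})\ \ge\ \vartheta_n^\rho(f_\delta)-\vartheta_n^\rho\bigl(]\alpha-\delta,\alpha+\delta[\setminus\{\alpha\}\bigr)\ \ge\ \vartheta_n^\rho(f_\delta)-\varepsilon/3 .
\]
On the high-probability event $|\vartheta_n^\rho(f_\delta)-N'(f_\delta)|<\varepsilon/3$ from Theorem~\ref{lemma-weakconvergence}, this gives $\vartheta_n^\rho(\{\alpha\})>N'(\{\alpha\})-\varepsilon$. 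Combining with Step 1 proves the first assertion. The only delicate point is that the lemma's bound is uniform over all finite integer-coefficient operators of norm at most $R$; this uniformity is exactly what makes the lower bound deterministic.

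\emph{Step 3: uniform convergence of the IDS.} First note that $N'$ has at most countably many atoms. Given $\eta>0$, choose finitely many points $\beta_1<\dots<\beta_m$ in $\Sigma$ with the following properties: they include every atom of $N'$ of mass $\ge\eta$, and the $N'$-mass of each open gap $]\beta_j,\beta_{j+1}[$, as well as of the regions below $\beta_1$ and above $\beta_m$, is $<\eta$. Now apply the first assertion to each $\beta_j$, and apply weak convergence to continuous functions approximating the indicators of the closed intervals $]-\infty,\beta_j]$ (possible up to $\eta$ using the chosen atoms). This yields events of probability tending to $1$ on which $|\vartheta_n^\rho(]-\infty,\beta_j])-N(\beta_j)|$ and $|\vartheta_n^\rho(\{\beta_j\})-N'(\{\beta_j\})|$ are small for all $j$. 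Monotonicity of distribution functions between consecutive $\beta_j$ then gives $\sup_\beta|N_n^\rho(\beta)/|V_n|-N(\beta)|\le C\eta$ on these events. Finally, take a countable sequence $\eta_l\to0$ and run the diagonal construction of Corollary~\ref{lemma-weakconvergence-consequence} to produce sets $A_n$ with $\mu_n(A_n)\to1$ on which \eqref{eqn:IDSconv} holds.
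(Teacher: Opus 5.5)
Your proposal is correct and is essentially the paper's proof. Steps~1--2 reproduce its sandwich argument: a continuous bump $h$ with $1_{\{\alpha\}}\le h\le 1_{]\alpha-\varepsilon_k,\alpha+\varepsilon_k[}$, Theorem~\ref{lemma-weakconvergence} applied to $h$, and the uniform bound of Lemma~\ref{lemma-estimate} on the punctured interval, which applies because an induced approximation of an integer-coefficient operator again has integer coefficients. You are, if anything, more careful than the paper in invoking Proposition~\ref{prop:leergodic} and in justifying bounded interaction.

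The only deviation is in Step~3. You prove uniform convergence directly, with a finite-grid P\'olya-type argument at each scale $\eta$ followed by diagonalization. The paper instead diagonalizes over a countable dense family plus the atom indicators, handles non-atoms via the Portmanteau lemma, and cites \cite{LV09}*{Lemma~6.3}. Both routes work.
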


\begin{proof}
Suppose that $\{H^{\omega}\}$ is a continuous operator with coefficients in $\Z + i \Z$ and suppose that $(\{H_n^{\rho}\})_n$ is an induced approximation. 
	Let $\alpha \in \mathbb{R}$ and $\eta > 0$. 
	By Lemma \ref{lemma-estimate}, there exists a sequence $(\varepsilon_k)$ of positive numbers with  $0 < \varepsilon_k < \frac{1}{k}$ such that for all $n\in\N$ and $\rho\in \cX^{V_n}$ we have
	\[\vartheta^{\rho}_n({I}_k)\le \left(\frac{1}{k}+\frac{\log(2R)}{\log(\frac{1}{2\epsilon_k})}\right)^{\frac{1}{2}},\]
	where ${I}_k:=]\alpha-\varepsilon_k,\alpha+\varepsilon_k[\setminus\{\alpha\}$ and $R \geq 1$ is chosen (cf.\@ Remark~\eqref{rem:bounded}) such that
	\[
	\sup_{n \in \N} \sup_{\rho \in \mathcal{X}^{V_n}}\|H^\rho_n\| \leq R.
	\] 
	We now fix $k \in \mathbb{N}$ large enough such that $\vartheta_n^{\rho}({I}_k) \leq  \eta$ for all $n \in \mathbb{N}$ and $\rho \in \mathcal{X}^{V_n}$. Further, we choose a continuous function $h\in C_b(\R)$ of compact support with $1_{\{\alpha\}}\le h\le 1_{I_k \cup \{\alpha\}}$ and such that 
\[\int_{\mathcal{X}^G} \vartheta^{\omega}(h)d\mu(\omega) \le \int _{\mathcal{X}^G} \vartheta^{\omega}(\{\alpha\})d\mu(\omega) + \eta/2.\]
For $n \in \N$, let 
\[M_n:= \left\{\rho\in \mathcal{X}^{V_n}\;:\;\;\Big|\vartheta^{\rho}_n(h)-\int_{\mathcal{X}^G}\vartheta^{\omega}(h)d\mu(\omega) \Big|<\eta/2\right\}.
\] 
Note that since $\{H^{\omega}\}$ has integer coefficients, it is of bounded interaction, see the fourth bullet point of Remark~\ref{rem:boundedinteraction}. 
We deduce from Theorem ~\ref{lemma-weakconvergence} that $\lim_n \mu_n(M_n) = 1$.

By applying the condition of the set $M_n$ and the properties of $h$ we obtain for all $n\in \N$ and $\rho\in M_n$ that
\begin{align*}
\vartheta_n^\rho(\{\alpha\})&\le \vartheta_n^\rho(h)\le \int_{\mathcal{X}^G}\vartheta^{\omega}(h)d\mu(\omega) +\eta/2\le \int_{\mathcal{X}^G}\vartheta^{\omega}(\{\alpha\})d\mu(\omega) +\eta \\ &\le \int_{\mathcal{X}^G}\vartheta^{\omega}(h)d\mu(\omega) +\eta \le \vartheta_n^\rho(h) + \frac{3\eta}{2}\le \vartheta_n^\rho(I_{k} \cup \{\alpha\})+\frac{3\eta}{2}\\
&\le \vartheta^{\rho}_n(\{\alpha\})+\vartheta^{\rho}_n({I}_k)+\frac{3\eta}{2}\le \vartheta_n^{\rho}(\{\alpha\})+2\eta.
\end{align*}
Hence, we have shown that 
\[\Big\{ \rho \in \mathcal{X}^{V_n}:\, \Big|\vartheta_n^\rho(\{\alpha\})-\int_{\mathcal{X}^G}\vartheta^{\omega}(\{\alpha\})d\mu(\omega) \Big|\le \eta \Big\}\supset M_n\] and since $\lim_n \mu_n(M_n) = 1$, this concludes the proof.

\medskip

We now turn to the ``in particular''-part.  We follow the strategy of the proof of Corollary~\ref{lemma-weakconvergence-consequence}, building on the first part of the theorem above. To this end,  
	 fix a compact set $\Sigma \subseteq \R$ containing the supports of all measures $\vartheta_n^{\rho}$ and $N^{\prime}$, along with a countable dense set $S \subseteq C(\Sigma)$. 
	Note that the set of all  $\alpha \in \R$ with $N^{\prime}(\{\alpha\}) > 0$ is countable. Consequently, we can fix an arbitrary enumeration $i \mapsto p_i$, where $p_i$ is either in $S$ or the characteristic function $1_{\{\alpha\}}$ of some $\alpha$ with $N^{\prime}(\{\alpha\}) > 0$. For $l \in \N$ and $i \leq l$, we set  
		\[
	A_{n,l}^{(i)} := \big\{ \rho \in \mathcal{X}^{V_n}:\, |\vartheta_n^{\rho}(p_i) - N^{\prime}(p_i) | < 1/l \big\}.
	\]
	Now we can proceed exactly as in the proof of Corollary~\ref{lemma-weakconvergence-consequence}, to find $N_l$ such that for $n \geq N_l$ we have $\mu_n(A_{n,l}^{(i)}) > 1-2^{-l}/l$ (due to Theorem~\ref{lemma-weakconvergence} if $p_i \in S$ and due to the first part of the present theorem if $p_i = 1_{\{\alpha\}}$). As in the proof of Corollary~\ref{lemma-weakconvergence-consequence}, this gives a sequence $(A_n)$ of subsets in $\mathcal{X}^{V_n}$ with $\lim_{n \to \infty}\mu_n(A_n)=1$ and such that for every sequence $(\rho_n)$ with $\rho_n \in A_n$, the measures $(\vartheta_n^{\rho_n})$ converge weakly to $N^{\prime}$ and at the same time $\lim_{n \to \infty} \vartheta_n^{\rho_n}(\{\alpha\}) = N^{\prime}(\{\alpha\})$ for each $\alpha \in \R$ with $N^{\prime}(\{\alpha\}) > 0$. Moreover, for $\alpha \in \R$ with $N^{\prime}(\{\alpha\}) = 0$, weak convergence together with the Portmanteau lemma \cite{kechris2012classical}*{Chapter II, Theorem 17.20} for closed sets imply
	\[
	0 \leq \limsup_{n \to \infty} \vartheta_n^{\rho_n}(\{\alpha\}) \leq N^{\prime}(\{\alpha\}) = 0.  
	\]
	If follows from \cite{LV09}*{Lemma~6.3} that weak convergence, together with pointwise convergence of the measures in $(\vartheta^{\rho_n}_n)$ to $N^{\prime}$ imply  uniform convergence of their underlying distribution functions to the integrated density of states $N$. This finishes the proof. 
\end{proof}

In the following, We say that $\{H^{\omega}\}$ {\em takes finitely many values} if
\[
\{H^{\omega}(x,y): \omega \in \mathcal{X}^G, x,y \in G \}
\]
is a finite subset of $\C + i\C$. If all the values belong to $\Q + i \Q$, then we say that $\{H^{\omega}\}$ {\em takes finitely many rational values}.

\begin{cor} \label{cor:rationalvalues}
The statement of the previous theorem also holds if instead of integer coefficients, the family $\{H^{\omega}\}$ takes finitely many rational values. 
\end{cor}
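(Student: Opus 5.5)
We reduce to Theorem~\ref{thm:MAIN} by clearing denominators. Since $\{H^{\omega}\}$ takes only finitely many values, all lying in $\Q + i\Q$, we can fix a common denominator $q \in \N$ with $q\,H^{\omega}(x,y) \in \Z + i\Z$ for all $\omega \in \mathcal{X}^G$ and $x,y \in G$. We then consider the rescaled family $\{qH^{\omega}\}$ and first check that it is again a continuous operator over $G$ with the same parameter $M$. Continuity, self-adjointness, finite hopping range and equivariance are all preserved under multiplication by the positive real scalar $q$. By construction $\{qH^{\omega}\}$ has integer coefficients.

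Next we transfer the approximation. Let $(\{H_n^{\rho}\})_n$ be an induced approximation for $\{H^{\omega}\}$. We claim that $(\{qH_n^{\rho}\})_n$ is an induced approximation for $\{qH^{\omega}\}$, and we verify the conditions of Definition~\ref{def-randomapprox} one by one.
\begin{itemize}
\item Borel measurability of the coefficient maps and self-adjointness are clearly preserved.
\item The hopping range $M$ is preserved.
\item The identity $qH_n^{\rho}(w,v) = qH^{\Pi_w^{\sigma_n}(\rho)}(e,g)$ at $4M$-good vertices follows from the corresponding identity for $H_n^{\rho}$.
\item The value condition carries over, since the coefficients of $qH_n^{\rho}$ lie in $q\cdot\{H^{\omega}(x,y)\}\cup\{0\} = \{qH^{\omega}(x,y)\}\cup\{0\}$.
\end{itemize}
In particular every $qH_n^{\rho}$ has integer coefficients, which is what Lemma~\ref{lemma-estimate} needs inside the proof of Theorem~\ref{thm:MAIN}. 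Theorem~\ref{thm:MAIN} therefore applies to $\{qH^{\omega}\}$ and $(\{qH_n^{\rho}\})$.

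Finally we translate back through the scaling of spectral measures. By the spectral calculus, $E_{qH}(B) = E_H(q^{-1}B)$ for every Borel set $B\subseteq \R$. Writing $\tilde\vartheta$ for the measures attached to the rescaled operators, this gives $\tilde\vartheta_n^{\rho}(\{q\alpha\}) = \vartheta_n^{\rho}(\{\alpha\})$ and $\tilde\vartheta^{\omega}(\{q\alpha\}) = \vartheta^{\omega}(\{\alpha\})$, so $\tilde N'(\{q\alpha\}) = N'(\{\alpha\})$. Likewise the distribution functions satisfy $\tilde N(q\beta) = N(\beta)$ and $\tilde N_n^{\rho}(q\beta) = N_n^{\rho}(\beta)$. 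Applying the first statement of Theorem~\ref{thm:MAIN} at the point $q\alpha$ yields the first statement for $\{H^{\omega}\}$ at $\alpha$. For the second statement we keep the same sets $A_n$: the supremum over $\beta\in\R$ is invariant under the bijection $\beta\mapsto q\beta$, so the uniform convergence \eqref{eqn:IDSconv} transfers verbatim.

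The only point needing real care is confirming that the hypotheses of Theorem~\ref{thm:MAIN} that are used implicitly survive the rescaling. These are the bounded interaction property, which is needed to invoke Theorem~\ref{lemma-weakconvergence} and holds by the fourth bullet of Remark~\ref{rem:boundedinteraction} since finitely many values are taken, and the uniform norm bound $R$, which is simply replaced by $qR$. Neither causes trouble, so the argument is essentially a routine rescaling.
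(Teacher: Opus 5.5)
Your proposal is correct and follows the same route as the paper: clear denominators with a common $q\in\N$, apply Theorem~\ref{thm:MAIN} to $\{qH^{\omega}\}$, and transfer the conclusion back via the scaling relations $N_{*}(q\beta)=N(\beta)$ and $N^{\rho}_{n,*}(q\beta)=N^{\rho}_n(\beta)$, together with their analogues for the point masses of the spectral measures. Your explicit check that $(\{qH_n^{\rho}\})_n$ is an induced approximation for $\{qH^{\omega}\}$ with integer coefficients spells out what the paper compresses into the phrase that the rescaled family ``satisfies all additional properties to apply'' the theorem.
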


\begin{proof}
By the assumption on the coefficients, there is an integer $q \in \mathbb{N}$ such that the family $\{ H^{\omega}_{*} \} = \{ qH^{\omega} \}$ has coefficients in $\Z + i \Z$ and satisfies all additional properties to apply Theorem~\ref{thm:MAIN}.
The proof is concluded after observing through the spectral calculus that the integrated density of states $N_{*}$ of $\{ H^{\omega}_{*} \} $ and the eigenvalue counting functions $N_{n,*}^{\rho}$  of its induced approximations have the scaling property
\[
N_{*}(q\beta) = N(\beta) \quad \quad \mbox{and} \quad\quad N_{n,*}^{\rho}(q\beta) = N_{n}^{\rho}(\beta), 
\]
for every $\beta \in \R$, and similary for the density of states measures.
\end{proof}

\begin{cor}[Uniform approximation of the IDS for PA groups] \label{thm:IDS_PA}
Suppose that $G$ is a finitely generated PA group and let $(G,\mathcal{A},\mu)$ be an ergodic $G$-process, where $\mathcal{A}$ is a finite set. Then there is a residually finite model $(V_n,\sigma_n,\mu_n)$ with the following property: for every continuous operator $\{H^{\omega}\}$ taking finitely many rational values, there exist an induced approximation $(\{H^{\rho}_n\})_n$ and a sequence $(A_n)$ with $A_n \subseteq \mathcal{A}^{V_n}$ with $\lim_{n \to \infty}\mu_n(A_n) = 1$ such that for $(\rho_n)$ with $\rho_n \in A_n$,
\begin{align*}
	\lim_{n \to \infty} \sup_{\beta \in \R} \left| \frac{N_n^{\rho_n}(\beta)}{|V_n|} - N(\beta) \right| = 0.
\end{align*}
\end{cor}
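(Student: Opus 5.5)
The plan is to assemble the corollary from three earlier results: Corollary~\ref{cor:PAEPA} for the model, Proposition~\ref{prop:goodapprox} for the approximants, and Theorem~\ref{thm:MAIN} through Corollary~\ref{cor:rationalvalues} for the convergence.

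\emph{The model.} Since $G$ is PA and $\mathcal{A}$ is finite, Corollary~\ref{cor:PAEPA} provides a residually finite model $(V_n,\sigma_n,\mu_n)$ for $(G,\mathcal{A},\mu)$. Here $V_n=G/\Gamma_n$, and $\sigma_n$ is the left multiplication action, so every $\sigma_n$ is a group homomorphism. Moreover $\mu_n\overset{lw*}{\longrightarrow}\mu$. Because $\mu$ is ergodic, Proposition~\ref{prop:leergodic} upgrades this to $\mu_n\overset{le}{\longrightarrow}\mu$, so the triple is an ergodically sofic model. This model is fixed once and for all; it does not depend on the operator, as the statement requires.

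\emph{The approximants.} Now let $\{H^{\omega}\}$ be any continuous operator taking finitely many rational values. Since all $\sigma_n$ are homomorphisms, the first alternative of Proposition~\ref{prop:goodapprox} applies. It yields an induced approximation $(\{H_n^{\rho}\})_n$ through formula~\eqref{eqn:2M!}, with all other entries set to zero. The fourth bullet of Remark~\ref{rem:boundedinteraction} gives more: after enlarging $M$, $\{H^{\omega}\}$ is of bounded interaction, so both alternatives are available. Every coefficient of $H_n^\rho$ is either $0$ or a coefficient of $\{H^\omega\}$, so the approximants also take values in the same finite subset of $\Q+i\Q$.

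\emph{The convergence.} Choose $q\in\N$ such that $qH^\omega$ has coefficients in $\Z+i\Z$. Then $(\{qH_n^\rho\})$ is an induced approximation for $\{qH^\omega\}$, because the defining conditions are linear in the coefficients. Corollary~\ref{cor:rationalvalues} then gives the sets $A_n\subseteq\mathcal{A}^{V_n}$ with $\mu_n(A_n)\to1$ and the uniform convergence~\eqref{eqn:IDSconv}, after undoing the scaling $\beta\mapsto q\beta$.

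The only delicate point is the choice of hypotheses in Theorem~\ref{thm:MAIN}. That theorem is phrased with a sofic model for an ergodic process, but its proof calls Theorem~\ref{lemma-weakconvergence}, which needs le-convergence together with either the homomorphism property or bounded interaction. I would check that both are secured above: le-convergence comes from Proposition~\ref{prop:leergodic}, and the homomorphism property from the residually finite approximation. Otherwise the corollary is a direct chaining of earlier results, and I expect no real obstacle.
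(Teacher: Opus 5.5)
Your proposal is correct and follows the paper's proof essentially verbatim. The residually finite model comes from Corollary~\ref{cor:PAEPA}, upgraded to le-convergence via Proposition~\ref{prop:leergodic}; the induced approximation comes from Proposition~\ref{prop:goodapprox}, using that the $\sigma_n$ are homomorphisms; and the uniform convergence comes from Corollary~\ref{cor:rationalvalues}. Your extra checks make explicit what the paper leaves implicit: that $(\{qH_n^{\rho}\})_n$ is an induced approximation for $\{qH^{\omega}\}$, and that the hypotheses of Theorem~\ref{lemma-weakconvergence} are met inside Theorem~\ref{thm:MAIN}.
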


\begin{proof}
	We obtain a  residually finite approximation $(V_n,E_n,\sigma_n)$, along with $\mu_n \in \mathrm{Prob}(\mathcal{A}^{V_n}, V_n)$ that locally and empirically converge to $\mu$ from Corollary~\ref{cor:PAEPA}, in combination with Proposition~\ref{prop:leergodic}. An induced approximation $(\{H_n^{\rho}\})_n$ over the graphs $(V_n,E_n)$  exists due to Proposition~\ref{prop:goodapprox}. The statement now follows from Corollary~\ref{cor:rationalvalues}. 
	\end{proof}

\section{Rational approximation via operator monotone convergence} \label{sec:monotoneOP}

We are now going to transfer the concept of monotone convergence of operators to the realm of continuous operators. For random operators taking finitely many complex coefficients, we establish pointwise  approximation of the IDS via finite volume analogs with varying rational coefficients, see Corollary~\ref{cor:MAIN_approx}.

\medskip

Let $\{H^{\omega}\}$ be a continuous operator taking finitely many values and let $(\{H_m^{\omega}\})$ be a sequence of continuous operators. We say that $(\{H_m^{\omega}\})$ 
is {\em adapted to} $\{H^{\omega}\}$ if
\begin{itemize}
    \item there is a common finite hopping range parameter $M \in \N$ for $\{H^{\omega}\}$ and all $\{H^{\omega}_m\}$,
    \item all $\{H^{\omega}_m\}$ take finitely many values such that for $ x,y,w,z \in G, \, \omega,\rho \in \mathcal{X}^G$, one has 
    \[ 
     H^{\omega}(x,y) = H^{\rho}(w,z) \quad \Longrightarrow \quad H^{\omega}_m(x,y) = H^{\rho}_m(w,z).
     \]
     {for all $m\in \N$.}
     \item for $ x,y \in G, \, \omega \in \mathcal{X}^G$ and $m \in \N$, one has 
    \[
    H_m^{\omega}(x,y) \neq 0 \quad \iff \quad H^{\omega}(x,y) \neq 0.
    \]
\end{itemize}

\begin{defn}[Monotone operator convergence for continuous operators]
For a continuous operator $\{H^{\omega}\}$ taking finitely many values,
we say that a sequence $(\{H^{\omega}_m\})$ of continuous operators {\em operator monotonically converges} to  $\{H^{\omega}\}$ if 
\begin{itemize}
    \item $(\{H^{\omega}_m\})$ is adapted to $\{H^{\omega}\}$,
    \item $\lim_{m \to \infty} H^{\omega}_m(w,v) = H^{\omega}(w,v)$ for all $v,w \in G$ and every $\omega \in \mathcal{X}^G$,
    \item the operators and $H^{\omega}_{m+1} - H^{\omega}_m$ are positive semi-definite for all $m \in \N$ and every $\omega \in \mathcal{X}^G$.
\end{itemize}
If all of the latter operators  $H^{\omega}_{m+1} - H^{\omega}_m$ are even positive definite, we say that 
 $(\{H^{\omega}_m\})$ {\em strictly operator monotonically converges} to $\{H^{\omega}\}$ or just that the convergence of  $(\{H^{\omega}_m\})$ to $\{H^{\omega}\}$ is {\em strict}.
\end{defn}

\begin{rem}
A couple of remarks on the definition are in order. 
\begin{itemize}
    \item The second bullet point guarantees convergence of the operators $H^{\omega}_m$ to $H^{\omega}$ in the weak operator topology.
    \item Combined with the aforementioned convergence, the adaptedness condition yields that the convergence of coefficients can be described by finitely many convergent complex sequences, uniformly over all $\omega \in \mathcal{X}^G$. Via a straight forward computation one then even obtains convergence  in operator norm, as stated in Lemma~\ref{lem:montoneOPconvergence} below. 
    \item One might argue that the terminology should reflect the monotonically increasing order given in the definition. Since we will exclusively be interested in monotonically increasing convergence, we will just speak of monotone convergence.   
\end{itemize}
\end{rem}

 \begin{exmp} \label{exa:Schrödingerpert}
 A special case of interest are  operators being perturbed by potentials in a monotonically increasing way.
To this end, let $\mathcal{X}=\mathcal{A}$ be finite and let 
$F: \mathcal{A} \to \R$ be a map. For a continuous operator $\{H^{\omega}\}$ taking finitely many values, we consider its {\em perturbation by $F$} as 
\[
 H_F^{\omega}: \ell^2(G) \to \ell^2(G), \quad H_F^{\omega}u(x) = H^{\omega} u(x) + F(\omega(x)) u(x), \quad \omega \in \mathcal{A}^G.
\]
Clearly, $\{H_F^{\omega}\}$ gives rise to a continuous operator as well. It is straightforward to see that if $(F_m)$ is a sequence of maps $\mathcal{A} \to \R$ converging monotonically increasingly to $F:\mathcal{A} \to \R$, then $(\{H_{F_m}^{\omega}\})$  operator monotonically converges to $\{H^{\omega}_F\}$. 
The operator monotone convergence is strict if $(F_m)$ strictly monotonically converges to $F$. Note that the Schr\"odinger operators from Example~\ref{exa:Schrodinger} are perturbations of the discrete Laplacian by potentials $F:\mathcal{A} \to \R$.
\end{exmp}

Combining adaptedness and the convergence in weak operator topology yields convergence in operator norm, as demonstrated in the next lemma. 

\begin{lem} \label{lem:montoneOPconvergence}
    Suppose that $(\{H^{\omega}_m\})$  operator monotonically converges to a continuous $\{H^{\omega}\}$ taking finitely many values. Then the following holds. 
    \begin{itemize}
        \item[(i)] We have uniform convergence in operator norm, i.e.\@
         \begin{align*}
    \lim_{m \to \infty} \sup_{\omega \in \mathcal{A}^G} \big\| H^{\omega}_m - H^{\omega} \big\|_{\operatorname{op}} \, = \, 0.
    \end{align*}
    In particular, 
    \begin{align*}
    \sup_{m \in \N} \sup_{\omega \in \mathcal{A}^G} \big\| H_m^{\omega} \big\|_{\operatorname{op}} \, < \, \infty.
    \end{align*}
        \item[(ii)]   For each $\omega \in \mathcal{X}^G$, the spectral measures $\vartheta_m^{\omega}$ of $H_m^{\omega}$ converge weakly to the spectral measure $\vartheta^{\omega}$ of $H^{\omega}$, uniformly in $\omega \in \mathcal{X}^G$, i.e.\@ for every $f \in C_b(\R)$, 
        \begin{align*}
        \lim_{m \to \infty} \sup_{\omega \in \mathcal{X}^G} \big| \vartheta_m^{\omega}(f) - \vartheta^{\omega}(f) \big| = 0. 
        \end{align*}
    \end{itemize}

\end{lem}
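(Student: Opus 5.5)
For part (i), I will first reduce the operator-norm estimate to a statement about finitely many scalar sequences. Since $\{H^{\omega}\}$ takes finitely many values, its coefficient set
\[
\{H^{\omega}(x,y): \omega \in \mathcal{X}^G, x,y \in G\} = \{c_1,\dots,c_L\}
\]
is finite. By the second bullet of adaptedness, $H^{\omega}_m(x,y)$ depends only on which $c_j$ equals $H^{\omega}(x,y)$. So for each $m$ there are numbers $c_{j}^{(m)}$ with $H_m^{\omega}(x,y) = c_j^{(m)}$ whenever $H^{\omega}(x,y) = c_j$. If $c_j$ is attained at a single pair $(x,y,\omega)$, pointwise convergence of coefficients gives $c_j^{(m)} \to c_j$ as $m \to \infty$. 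The crucial point is that this convergence is uniform in $x,y,\omega$, since only $L$ sequences are involved. Hence
\[
\delta_m := \sup_{\omega}\sup_{x,y}|H^{\omega}_m(x,y) - H^{\omega}(x,y)| = \max_j |c_j^{(m)} - c_j| \to 0.
\]

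Next I will use the common hopping range $M$: the difference $H^{\omega}_m - H^{\omega}$ vanishes at $(x,y)$ whenever $d_S(x,y) > M$. Each row and each column therefore has at most $|B_S(e,M)|$ nonzero entries, each bounded by $\delta_m$. The Schur test then gives
\[
\|H^{\omega}_m - H^{\omega}\| \le |B_S(e,M)|\,\delta_m
\]
uniformly in $\omega$, which proves the first claim of (i). The uniform bound on $\|H_m^{\omega}\|$ follows from the triangle inequality together with $\sup_\omega\|H^\omega\|<\infty$ (Remark~\ref{rem:boundedinteraction}). I note that monotonicity is not needed at all here; adaptedness plus coefficientwise convergence suffices.

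For part (ii), I will fix a compact interval $\Sigma$ containing the spectra of all $H^{\omega}_m$ and $H^{\omega}$, which exists by (i). As in Corollary~\ref{cor-av-eq-id}, I may replace $f \in C_b(\R)$ by its restriction to $\Sigma$. Given $\varepsilon>0$, I choose a polynomial $p$ with $\|f-p\|_{\infty,\Sigma}<\varepsilon$ by Weierstra{\ss}. Then
\[
|\vartheta^{\omega}_m(f)-\vartheta^{\omega}(f)| \le 2\varepsilon + |\langle\delta_e,(p(H^{\omega}_m)-p(H^{\omega}))\delta_e\rangle| \le 2\varepsilon + \|p(H^{\omega}_m)-p(H^{\omega})\|.
\]
For monomials I will use the telescoping identity
\[
A^k - B^k = \sum_{i=0}^{k-1} A^{i}(A-B)B^{k-1-i},
\]
which yields $\|A^k-B^k\|\le k C^{k-1}\|A-B\|$, where $C$ is the uniform norm bound. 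Combined with (i), the last term tends to $0$ uniformly in $\omega$, and this proves (ii).

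I expect the only real subtlety to be the uniformity step in (i). One must check that adaptedness, namely that equal coefficients of $H$ force equal coefficients of $H_m$, really collapses the family to finitely many scalar sequences. One must also verify that the zero coefficient is handled consistently, which is guaranteed by the third bullet of adaptedness.
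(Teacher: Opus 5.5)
Your proposal is correct and follows the paper's own argument. Adaptedness reduces the coefficients to finitely many convergent scalar sequences, and the common hopping range turns uniform coefficient convergence into uniform operator-norm convergence; your Schur-test bound $|B_S(e,M)|\,\delta_m$ simply makes explicit the ``straightforward computation'' the paper alludes to. Part (ii) then follows, as in the paper, from the telescoping identity for powers together with Weierstra{\ss} approximation on a common compact spectral interval. You are also right that monotonicity plays no role in this lemma.
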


\begin{proof}
    The uniform convergence over $\omega \in \mathcal{X}^G$ in operator norm is an immediate consequence from the observations made in the previous remark. The uniform boundedness of the operators follows from the uniform convergence in operator norm and the fact that by adaptedness, all $\{H_m^{\omega}\}$ take finitely many values. 

    As for the second assertion, note that for $p \in \R[x]$, we have convergence of $(p(H^{\omega}_m))_m$ to $p(H^{\omega})$ in operator norm, as follows from assertion~(i), together with the formula
\[
(H_{m}^{\omega})^s - (H^{\omega})^s = \sum_{i=0}^{{s-1}} (H_{m}^{\omega})^{s-1-i} \big(H_{m}^{\omega} -  H^{\omega}\big)(H^{\omega})^i
\]
for $s\in \N$. The weak convergence for the spectral measures now follows by a Stone-Weierstra{\ss} approximation argument. 
\end{proof}

We now state a continuity result of the IDS with respect to the coefficients of continuous operators. Note that this result does not hinge on soficity of the group. 

\begin{thm}[Convergence of the IDS for monotone sequences of continuous operators] \label{thm:OPMON}
Let $G$ be a finitely generated (not necessarily sofic) group, and let $(G,\mathcal{X},\mu)$ be an ergodic process.
Suppose that $\{H^{\omega}\}$ is a continuous operator over $G$ taking finitely many  values.  Further, assume that  $(\{H^{\omega}_m\})$ is a sequence of continuous operators over $G$ that 
 operator monotonically converges to  $\{H^{\omega}\}$. Then  
\begin{align} \label{eqn:IDScontinuous}
\lim_{m \to \infty} N_m(\beta) = \inf_{m \in \N} N_m(\beta) =N(\beta) \quad \mbox{ for all } \quad \beta \in \R,
\end{align}
 where $N_m$ and $N$ denote the integrated densities of states for $\{H_m^{\omega}\}$ and $\{H^{\omega}\}$, respectively. 
\end{thm}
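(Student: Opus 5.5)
The plan is to combine operator monotonicity of spectral projections in trace form with right-continuity of the IDS. The first step is pointwise monotonicity. Fix $\omega$ and $\beta$. Since $H_{m+1}^{\omega}-H_m^{\omega}\geq 0$, we have $H_m^{\omega}\leq H_{m+1}^{\omega}\leq H^{\omega}$; the upper bound holds because $H^{\omega}-H_m^{\omega}$ is the norm limit (Lemma~\ref{lem:montoneOPconvergence}(i)) of positive operators $H_k^{\omega}-H_m^{\omega}$. Spectral projections are not operator monotone in general, so I will not compare $E_{H_m^\omega}(]-\infty,\beta])$ and $E_{H^\omega}(]-\infty,\beta])$ directly. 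Instead I compare their traces, using the von Neumann algebra framework announced in the introduction: the algebra of equivariant measurable families $\{A^\omega\}$ with the faithful normal finite trace $\tau(A)=\int\langle\delta_e,A^\omega\delta_e\rangle\,d\mu(\omega)$, as in \cite{LPV07}. In this algebra, $N_m(\beta)=\tau(E_{H_m}(]-\infty,\beta]))$. For $A\leq B$ self-adjoint in a finite von Neumann algebra, a min--max argument gives $\tau(E_B(]-\infty,\beta]))\leq\tau(E_A(]-\infty,\beta]))$. The argument runs as follows. Put $P=E_B(]-\infty,\beta])$ and $Q=E_A(]\beta,\infty[)$. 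If $P\wedge Q\neq 0$, a unit vector $u$ in its range satisfies $\langle u,Au\rangle>\beta\geq\langle u,Bu\rangle$; strict inequality on the left holds because $u\in\mathrm{ran}\,E_A(]\beta,\infty[)$ is nonzero (one checks that $\langle u,(A-\beta)u\rangle=0$ would force $u\in\ker(A-\beta)$, which is orthogonal to $\mathrm{ran}\,Q$). This contradicts $A\leq B$. Hence $P\wedge Q=0$, and Kaplansky's formula gives $\tau(P)\leq\tau(1-Q)$. This is the comparison used in \cite{BK90}. It yields $N\leq N_{m+1}\leq N_m$, so $\lim_m N_m(\beta)=\inf_m N_m(\beta)\geq N(\beta)$.

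The second step is the reverse inequality. Fix $\beta$ and $\delta>0$. By Lemma~\ref{lem:montoneOPconvergence}(i), $\varepsilon_m:=\sup_\omega\|H^{\omega}-H_m^{\omega}\|\to0$. Then $H_m^{\omega}\geq H^{\omega}-\varepsilon_m$, and the same trace comparison gives $N_m(\beta)\leq N(\beta+\varepsilon_m)$. By right-continuity of the distribution function $N$ (the measure $N'$ is a probability measure), $N(\beta+\varepsilon_m)\to N(\beta)$. Combining the two steps yields~\eqref{eqn:IDScontinuous}. The weak convergence from Lemma~\ref{lem:montoneOPconvergence}(ii) alone would only give convergence at continuity points of $N$; it is the monotonicity from below, together with right-continuity, that upgrades this to every $\beta$.

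The main obstacle is making the trace comparison rigorous in the random setting. This requires identifying $N_m(\beta)$ with $\tau$ of a projection in a finite von Neumann algebra on $L^2(\mathcal{X}^G\times G,\mu\otimes\#)$, checking that $\tau$ is faithful and normal there (via equivariance and invariance of $\mu$; ergodicity is not needed for this), and that the lattice relation $\tau(P)+\tau(Q)=\tau(P\vee Q)+\tau(P\wedge Q)$ holds. I would first try to avoid the lattice machinery by working fiberwise. For each $\omega$ the intersection $\mathrm{ran}\,P^\omega\cap\mathrm{ran}\,Q^\omega=\{0\}$, and I would integrate this using the equivariant projection families. If that proves cumbersome, I would invoke \cite{LPV07} for the algebra and \cite{BK90} for the comparison.
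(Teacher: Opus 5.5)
Your proof is correct. The first half is the paper's argument. The paper also realizes $N_m(\beta)$ and $N(\beta)$ as $\tau$ of spectral projections in the von Neumann algebra of Proposition~\ref{prop:LPV}. It then obtains $N(\beta)\le N_{m+1}(\beta)\le N_m(\beta)$ from the comparison in \cite{BK90}, after first shifting $H_m$ and $H$ by a constant to make them positive. Your self-contained proof via $P\wedge Q=0$ and Kaplansky's parallelogram law makes that shift unnecessary.

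The second half takes a different route. The paper does not use the trace again. It integrates the uniform weak convergence of Lemma~\ref{lem:montoneOPconvergence}(ii) over $\omega$, so the density of states measures converge weakly. It then applies the closed-set half of the Portmanteau theorem to $]-\infty,\beta]$, which gives $\limsup_m N_m(\beta)\le N(\beta)$ at every $\beta$. You instead feed $H-\varepsilon_m\le H_m$ back into the comparison to get $N_m(\beta)\le N(\beta+\varepsilon_m)$, and finish with right-continuity.

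The trade-offs are as follows. Your route yields an explicit one-sided bound in terms of $\sup_\omega\|H^\omega-H_m^\omega\|$. However, it needs the uniform norm convergence of part~(i) and uses the trace comparison twice. The paper's route needs only weak convergence for the upper bound.

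Your closing remark therefore undersells weak convergence. It already gives the upper bound at all $\beta$, not just at continuity points, and monotonicity is only needed for the lower bound.

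Finally, the fiberwise shortcut you mention would not bypass the machinery. The inequality $\langle\delta_e,E_{B^\omega}(]-\infty,\beta])\delta_e\rangle\le\langle\delta_e,E_{A^\omega}(]-\infty,\beta])\delta_e\rangle$ can fail for individual $\omega$. Diagonal entries of spectral projections are not monotone in the operator, already for $2\times 2$ matrices. So the tracial property of $\tau$ is genuinely needed, as in your fallback to \cite{LPV07} and \cite{BK90}.
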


 \begin{rem} \label{rem:notuniform}
 We make some remarks. 
 \begin{itemize}
     \item We point out that the monotonicity assumption on the convergence  is essential. The reason behind this is the fact that the integrated density of states is in general only continuous from the right.
     This becomes apparent already in very elementary settings of perturbations. For instance, assume that $\mathcal{X} =\cA = \{a\}$ consists of one element only (i.e.\@ $\mu$ is a point measure). Now for an arbitrary Cayley graph over some group $G$ and $m \in \N$, consider $H_{F_m}$ as the perturbation of the zero operator by the (constant) function $F_m$  determined by  $F_m(a)=1/m$. In simpler words, $H_{F_m}$ is just the multiplication operator by $1/m$. Clearly, $(F_m)$ converges to the zero function $F$ determined by $F(a) = 0$, but in a monotonically decreasing way. Since $H_F$ is the zero operator, we have $N_F(0)=1$. On the other hand, we have $N_{F_m}(0)=0$ for all $m \in \N$, so convergence fails in $\beta =0$. 
     \item In general, uniform convergence in $\beta$ cannot be expected. To see this, we modify the above example. We consider the functions $F_m$ determined by $F_m(a) = -1/m $, and $F$ is again the zero function. Now clearly, $(F_m)$ converges monotonically increasingly to the zero function $F$, and pointwise convergence of the IDS follows from the above theorem. However, for each $m \in \N$, we have 
     \[
    \sup_{\beta \in \R} \big| N_{F_m}(\beta) - N_{F}(\beta)\big| \geq \big| N_{F_m}(-1/m) - N_F(-1/m) \big| = 1 - 0 = 1,
     \]
     and uniform convergence fails to hold.    
     \item Continuity of the IDS with respect to varying Hamiltonians was also investigated in  \cite{LSV11}*{Section~5.5}. There, the authors prove  weak convergence of the density of states measure in the situation of an amenable Cayley graph whose vertices are decorated by a coloring having sufficient  ergodicity assumptions. Note that weak convergence can be obtained without monotonicity assumptions on the operator sequence.
 \end{itemize}
 \end{rem}

The proof of the theorem builds on a monotonicity criterion of finite traces in von Neumann algebras. We will use it as a black box. Recall that a {\em  von Neumann algebra} is a unital $*$-sub algebra of the space $\mathcal{B}(\mathcal{H})$ of bounded operators on a Hilbert space $\mathcal{H}$ that is closed in the weak operator topology. We will exclusively deal with the situation that $\mathcal{H}$ is separable since this is the generality we need and avoids some subtleties of the underlying theory. 
Denote by $\mathcal{M}_{+}$ the subset of positive semi-definite elements in a von Neumann algebra $\mathcal{M}$.  A {\em weight} $\tau$ on $\mathcal{M}$ is a  map $\tau:\mathcal{M}_{+} \to [0, \infty]$ that is linear on $\mathcal{M}_{+}$. A weight $\tau$ is called {\em finite} if  $\tau(\operatorname{Id}) < \infty$ for the identity operator $\operatorname{Id}$ on $\mathcal{H}$. A weight $\tau$ is called {\em faithful} if $\tau(a) = 0$ forces $a=0$ for $a \in \mathcal{M}_{+}$. Moreover, a weight $\tau$ is called {\em normal} if $\lim_{m \to \infty} \tau(A_m) = \tau(A)$ for every sequence $(A_m)$ in $\mathcal{M}_{+}$ that monotonically converges to $A$ in the strong operator topology, i.e.\@ $\lim_{m \to \infty} \|A_mx - Ax\|_{\mathcal{H}} = 0$ for all $x \in \mathcal{H}$ and $A_m - A_{\ell}$ is positive semi-definite whenever $m \geq \ell$. (Here, it is crucial to work with separable Hilbert spaces. In the non-separable case, one would have to work with nets.) A weight is called a {\em trace}  if it satisfies the {\em tracial property} $\tau(aa^{*}) = \tau(a^{*}a)$ for all $a \in \mathcal{M}$, where we denote by $a^{*}$ the adjoint of $a$.
For a deeper insight into operator algebras and their traces, we refer to the literature, e.g., \cites{Con79,kadisonringroseI, kadisonringroseII}.

\medskip
 For a process $(G,\mathcal{X},\mu)$ and continuous operators $A= \{A^{\omega}\}$ and $B= \{B^{\omega}\}$ over $G$ we say that $A$ and $B$ are {\em equivalent} if $A^{\omega} = B^{\omega}$ for $\mu$-almost every $\omega \in \mathcal{X}^G$. With a slight abuse of terminology and notation, we will not distinguish between representatives and classes. However we have to keep this distinction in mind since it becomes relevant in the following proposition.

 \medskip
The following proposition follows from (special cases of) results  from \cite{LPV07} and is tailored to what we need here. Note that for this we do not have to assume that $G$ is sofic.

\begin{prop} \label{prop:LPV}
Let $G$ be a finitely generated group (not necessarily sofic) and let $(G,\mathcal{Y},\mu)$ be an ergodic process. Then there is a von Neumann algebra $\mathcal{M}$ over a separable Hilbert space $\mathcal{H}$
    \begin{itemize}
        \item containing the equivalence classes of all continuous operators $\{H^{\omega}\}$ over $G$,
        \item admitting a faithful, finite and normal  trace $\tau:\mathcal{M}_{+} \to [0, \infty)$ such that for all $\beta \in \R$, 
        \begin{align*}
        \tau\big( 1_{]-\infty, \beta]}(\{H^{\omega}\}) \big) = N(\beta) := \int_{\mathcal{Y}^G} \langle \delta_e,\, 1_{]-\infty,\beta]}\big(H^{\omega}\big)\delta_e \rangle\, d\mu(\omega),
        \end{align*}
        where  $1_{]-\infty, \beta]}(\{H^{\omega}\}) \in \mathcal{M}_{+}$ denotes the corresponding spectral projection of $\{H^{\omega}\}$.
    \end{itemize}
\end{prop}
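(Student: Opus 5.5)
We realize $\mathcal{M}$ as the von Neumann algebra of decomposable, equivariant, essentially bounded measurable operator families, following \cite{LPV07}. Take $\mathcal{H} := L^2(\mathcal{Y}^G \times G, \mu \otimes \#)$, which we identify with $\int^{\oplus}_{\mathcal{Y}^G} \ell^2(G)\, d\mu(\omega)$. This space is separable, since $\mathcal{Y}^G$ is compact metrizable (so $L^2(\mu)$ is separable) and $G$ is countable.

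\textbf{The algebra.} Let $\mathcal{M}$ consist of all (classes of) measurable families $A=\{A^\omega\}$ with $\operatorname{ess\,sup}_\omega \|A^\omega\|<\infty$ and $A^{g.\omega}=U_g A^\omega U_{g^{-1}}$ for $\mu$-a.e.\ $\omega$ and all $g$. Such a family acts on $\mathcal{H}$ fiberwise.

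This $\mathcal{M}$ is a unital $*$-algebra. Its weak-operator closedness is the standard fact that it equals the commutant of $L^\infty(\mu)\otimes 1$ (the decomposable operators) intersected with the commutant of the unitaries $W_g(f)(\omega,x)=f(g^{-1}.\omega, xg)$, or with the appropriate convention matching equivariance. In either description $\mathcal{M}$ is an intersection of commutants, hence weakly closed.

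Continuous operators belong to $\mathcal{M}$. Measurability of $\omega\mapsto H^\omega(x,y)$ follows from continuity, uniform boundedness from Remark~\ref{rem:boundedinteraction}, and equivariance holds by definition.

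\textbf{The trace.} Define $\tau(A):=\int \langle\delta_e, A^\omega\delta_e\rangle\,d\mu(\omega)$ for $A\in\mathcal{M}_+$. This is additive and positively homogeneous, and $\tau(\operatorname{Id})=1$, so $\tau$ is finite.

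For normality, suppose $A_m\uparrow A$ strongly on $\mathcal{H}$. Testing against vectors $f\otimes\delta_e$ shows that $\langle\delta_e,A_m^\omega\delta_e\rangle$ increases to $\langle\delta_e,A^\omega\delta_e\rangle$ for a.e.\ $\omega$ (after passing to a subsequence if necessary). Monotone convergence of the integrals then gives $\tau(A_m)\to\tau(A)$.

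For the tracial property, compute
\[
\tau(A^*A)=\int\sum_{x\in G}|A^\omega(x,e)|^2\,d\mu(\omega).
\]
Equivariance gives $A^\omega(x,e)=A^{x^{-1}.\omega}(e,x^{-1})$, or the analogous identity with the paper's convention. Invariance of $\mu$ then lets us substitute $x\mapsto x^{-1}$ and shift $\omega$, which turns the sum into $\int\sum_y|A^\omega(e,y)|^2\,d\mu=\tau(AA^*)$. The relabelling $\omega\mapsto x.\omega$ here uses $G$-invariance of $\mu$.

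Faithfulness is the step I expect to be the main obstacle, and I would prove it by combining traciality, equivariance and invariance. Suppose $\tau(B^*B)=0$. By the computation above, $A^\omega(x,e)=0$ for a.e.\ $\omega$ and every $x$, where $A=B$. Equivariance transfers this to every column: $A^\omega(x,y)=A^{y.\omega}(xy^{-1},e)$. Invariance of $\mu$ gives $\mu$-null exceptional sets for each of the countably many pairs $(x,y)$, so $A=0$ in $\mathcal{M}$. Applying this to $A=a^{1/2}$ for $a\in\mathcal{M}_+$ shows that $\tau(a)=0$ forces $a=0$. Ergodicity is not needed for this step; I would cite \cite{LPV07} for the measure-theoretic details of the identification of $\mathcal{M}$.

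\textbf{IDS formula.} The functional calculus in $\mathcal{M}$ acts fiberwise. For polynomials this follows from the formula~\eqref{eqn:powers}. It extends to continuous functions by uniform approximation, and then to bounded Borel functions by monotone class/strong limits, since the fiberwise calculus commutes with strong limits. In particular $1_{]-\infty,\beta]}(\{H^\omega\})$ is the class of $\{1_{]-\infty,\beta]}(H^\omega)\}$, which is again equivariant because $U_g$ is unitary. Hence $\tau$ of this projection equals $N(\beta)$ by definition.
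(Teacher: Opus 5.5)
Your proposal is correct, and it builds exactly the object the paper uses. It has the same Hilbert space $L^2(\mathcal{Y}^G\times G)$ and the same algebra of equivariant decomposable families, which is the groupoid algebra $\mathcal{N}(\mathcal{G},\mathcal{X})$ of \cite{LPV07} for the transformation groupoid. It also uses the same functional $\tau(A)=\int\langle\delta_e,A^\omega\delta_e\rangle\,d\mu(\omega)$ and the same fiberwise identification of spectral projections.

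The difference lies in how the required properties are obtained. The paper imports them as black boxes: weak closedness from Connes \cite{Con79} and Theorem~3.1 of \cite{LPV07}, and faithfulness and normality from Theorem~4.2(a) of \cite{LPV07}. Traciality comes from Proposition~4.5 of \cite{LPV07}, after noting that every element of $\mathcal{M}$ is an equivariant Carleman operator. The paper also has to reconcile the left-translation convention of \cite{LPV07} with the right shifts used here.

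You instead verify everything directly:
\begin{itemize}
\item weak closedness, by describing $\mathcal{M}$ as the commutant of $(L^\infty(\mu)\otimes 1)\cup\{W_g\}$;
\item traciality, by a row/column computation using invariance of $\mu$;
\item faithfulness, from $\tau(B^*B)=\int\|B^\omega\delta_e\|^2\,d\mu$ together with transporting columns by equivariance.
\end{itemize}
Your formula $W_g f(\omega,x)=f(g^{-1}.\omega,xg)$ does match the paper's convention $H^{g.\omega}=U_gH^\omega U_{g^{-1}}$, so the hedge is unnecessary. The paper's route is shorter and sits inside a framework that covers more general geometries and affiliated unbounded operators. Yours is self-contained, avoids the convention juggling, and makes visible that only $G$-invariance of $\mu$ is used (for unitarity of $W_g$, traciality and faithfulness), not ergodicity.

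Three small simplifications:
\begin{itemize}
\item Normality is immediate, since $\tau(A)=\langle\xi,A\xi\rangle$ with $\xi=1\otimes\delta_e\in\mathcal{H}$ (recall $\mu$ is a probability measure). So $\tau$ is a vector functional, and no subsequence or a.e.\ argument is needed.
\item Faithfulness does not actually use traciality.
\item With the paper's convention $H^\omega(xg,yg)=H^{g.\omega}(x,y)$, the identity you need reads $A^\omega(x,e)=A^{x.\omega}(e,x^{-1})$.
\end{itemize}
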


\begin{proof}
 We describe how our setting fits into the framework of \cite{LPV07}.
    Adopting the notation of the latter paper, we define $\mathcal{X} = \mathcal{G} = \Omega \times G$, where $\Omega = \mathcal{Y}^G$.
    Define the separable Hilbert space $\mathcal{H} = L^2(\mathcal{X}, \mu \times c)$, where $c$ denotes the counting measure on $G$. 
    Now every equivalence class of a continuous operator $A=\{H^{\omega}\}$ gives rise to a bounded random operator $A:L^2(\mathcal{X}, \mu \times c) \to L^2(\mathcal{X},\mu \times c)$ as defined in Section~3 of \cite{LPV07}, decomposing with direct integral theory measurably as $A^{\omega}:\ell^2(G) \to \ell^2(G)$, where we canonically identify $\ell^2(\mathcal{X}^{\omega}) \cong \ell^2(\{\omega\} \times G) \cong \ell^2(G) $ for $\omega \in \Omega$.
    (Note that in \cite{LPV07}, the action of $G$ on $\Omega$ is by left translations, while in this paper we consider right translations. Still, the results from \cite{LPV07} carry over by a simple reparametrization $\widetilde{\Omega}$ of configurations, given as $\widetilde{\omega}(h) = \omega(h^{-1})$, where $\omega \in \Omega$ and $h \in G$.) By \cite{Con79}*{Theorem~V.2}, the almost everywhere equivalence classes of bounded random operators define a von Neumann algebra $\mathcal{M} = \mathcal{N}(\mathcal{G},\mathcal{X})$ consisting of bounded, self-adjoint operators $A=\{A^{\omega}\}$ on $\mathcal{H}$,
    see also Theorem~3.1 in \cite{LPV07}. Sticking to the language of Section~3 in \cite{LPV07}, (classes of) continuous operators are bounded random operators which are affiliated to $\mathcal{M}$, meaning that all their spectral projections also belong to $\mathcal{M}$. By \cite{LPV07}*{Theorem~4.2~(a)}, applied to the map $u:\mathcal{X} \to [0,\infty), u(\omega,g) = \delta_{e}(g)$, we find that 
    \[
    \tau: \mathcal{M}_{+} \to [0, \infty), \quad \tau (A) = \int_{\Omega} \langle \delta_e,\, A^{\omega}\delta_e \rangle\, d\mu(\omega), \quad A = \{A^{\omega}\} \in \mathcal{M}_{+} 
    \]
    gives rise to a faithful, normal and finite weight (Note that the paper \cite{LPV07} shows semi-finiteness of $\tau$. Here, finiteness follows immediately from the fact that $\mu$ is a probability measure.) In order to justify that $\tau$ is even a trace, we use another criterion of the above mentioned paper: by Proposition~4.5 in \cite{LPV07}, the tracial property holds for all equivariant Carleman operators. Carleman operators are those that have a fibrewise representation by an $L^2$-kernel, see page~17 in \cite{LPV07} for a definition. However, in our situation, every bounded random operator $A = \{A^{\omega}\} \in \mathcal{M}$ is in fact an equivariant Carleman operator with kernel $k\big( (\omega,g), (\rho, h) \big) = \overline{((A^{\omega})^{*} \delta_{g})(h)}$ for $(\omega,g), (\rho, h) \in \mathcal{X}$. 
     The observation that for a continuous operator  $H= \{H^{\omega}\}$, we have 
    \[
\big(1_{]-\infty, \beta]}(H) \big)^{\omega} = 1_{]-\infty, \beta]}(H^{\omega})
    \]
    for $\mu$-almost every $\omega \in \Omega$ (as can be seen again via measurable decomposition), finishes the proof. 
\end{proof}

We can now give the proof of Theorem~\ref{thm:OPMON}.

\begin{proof}[Proof of Theorem~\ref{thm:OPMON}]
    By assumption on operator monotone convergence, we have that $H_{m+1}^{\omega} - H_m^{\omega}$ and $H^{\omega}- H_m^{\omega}$ are positive semi-definite for $m \in \N$ and $\omega \in \mathcal{X}^G$. Moreover, the (classes of) continuous operators $H_m = \{H_m^{\omega}\}$ and $H= \{H^{\omega}\}$ 
    belong to the von Neumann algebra $\mathcal{M}$ given by Proposition~\ref{prop:LPV}. We thus conclude that $H_{m+1} - H_m \in \mathcal{M}_{+}$ and $H - H_m \in \mathcal{M}_{+}$ for every $m \in \N$. Also, by Proposition~\ref{prop:LPV}, $\mathcal{M}_{+}$ admits a faithful, normal and finite trace $\tau$ such that for all $\beta \in \R$,  
    $\tau\big( 1_{]-\infty,\beta]}(H_m) \big) = N_m(\beta)$ and $\tau\big( 1_{]-\infty,\beta]}(H) \big) = N(\beta)$, with $N$ and $N_m$ denoting the integrated densities of states of $H$ and $H_m$, respectively. Since perturbations by constants just results in translations of the density of states measure, we will assume with no loss of generality that $H_m, H \in \mathcal{M}_{+}$. This will put us in the position to apply a result from the literature. So let $\beta \geq  0$. Setting $g = 1_{]\beta, \infty[} = 1- 1_{]-\infty, \beta]}$, we find by
     by Lemma~3~(i) and inequality~(6) in \cite{BK90}
    \begin{align*}
    N_{m+1}(\beta) &= \tau\big( (1-g)(H^{m+1}) \big) \leq  \tau\big( (1-g)(H^{m}) \big) = N_m(\beta), \\
    N(\beta) &= \tau\big( (1-g)(H) \big) \leq \tau\big( (1-g)(H_m) \big) = N_m(\beta), 
    \end{align*}
    for all $m \in \N$. Consequently, the limit $N^{*}(\beta):= \lim_{m \to \infty} N_m(\beta) = \inf_{m \in \N} N_m(\beta)$
    exists for all $\beta$.  It remains to identify the limit as $N = N^{*}$.  We already know that $N(\beta) \leq \inf_m N_m(\beta) = N^{*}(\beta)$. Further, 
    we deduce from Lemma~\ref{lem:montoneOPconvergence} that the spectral measures $\vartheta^{\omega}_m$ associated with $H_m^{\omega}$ converge weakly to the spectral measure $\vartheta^{\omega}$ associated with $H^{\omega}$, uniformly in $\omega$. 
By Portmanteau's lemma, cf.\@ \cite{kechris2012classical}*{Chapter II, Theorem 17.20}, weak convergence yields 
\[
N^{*}(\beta) = \limsup_{m \to \infty} N_{m}(\beta) \leq N(\beta).
\]
This finishes the proof.
\end{proof}

\begin{lem} \label{lem:monotoneOPdensity}
Let $\{H^{\omega}\}$ be a continuous operator  taking finitely many values. Then there is a sequence $(\{H_m^{\omega}\})_m$ of continuous operators  that   strictly operator monotonically converges to $\{H^{\omega}\}$, with each $\{H^{\omega}_m\}$ taking finitely many rational values. \\
Moreover, we can make sure that for each $m \in \N$, every $\omega \in \mathcal{X}^G$ and each $v \in G$, we have
\begin{align} \label{eqn:quantGrishgorin}
H_{m+1}^{\omega}(v,v) - H_{m}^{\omega}(v,v) \, > \, \sum_{w \neq v} \big| H_{m+1}^{\omega}(v,w) - H_{m}^{\omega}(v,w) \big|.
\end{align}
\end{lem}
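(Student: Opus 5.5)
The plan is to perturb each of the finitely many values of $\{H^{\omega}\}$ separately: the off-diagonal values are rounded to nearby rationals, and the diagonal values are pushed down by an explicit margin that dominates the off-diagonal rounding errors. Let $\mathcal{V} = \{H^{\omega}(x,y): \omega \in \mathcal{X}^G,\, x,y\in G\}$, a finite set by assumption. Fix a hopping range parameter $M$. Every row of $H^{\omega}$ has at most $K := |B_S(e,M)|$ nonzero entries, so each row contains at most $K-1$ nonzero off-diagonal entries.

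First I split $\mathcal{V}$ into diagonal and off-diagonal values. By equivariance, $H^{\omega}(v,v) = H^{v.\omega}(e,e)$, so diagonal entries are determined by $\omega \mapsto H^{\omega}(e,e)$. This map takes finitely many values, and these values are real by self-adjointness. A value $c$ might occur both on and off the diagonal. In that case I modify the construction so that it is a function of the pair $(c, \text{on/off diagonal})$ rather than of $c$ alone. Adaptedness only requires equal values of $H$ to give equal values of $H_m$, so I simply replace each off-diagonal value by its own rational approximant. (If a value is shared across the diagonal and off the diagonal, I instead choose a common approximant, built as described below.)

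Next, for each off-diagonal value $c \in \mathcal{V}$ with $c \neq 0$, I choose $q_m(c) \in \Q + i\Q$ with $q_m(c) \neq 0$ and $|q_m(c) - c| \le 4^{-m}$. I arrange the choice so that the conjugation relation holds: $q_m(\bar c) = \overline{q_m(c)}$. This keeps each $H_m^{\omega}$ self-adjoint. Then
\[
|q_{m+1}(c) - q_m(c)| \le 2\cdot 4^{-m}.
\]
For a real diagonal value $d$, I choose a rational $r_m(d)$ satisfying
\[
d - 2^{-m} - 4^{-m} \le r_m(d) \le d - 2^{-m},
\]
and also $r_m(d) \neq 0$ whenever $d \neq 0$. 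Then
\[
r_{m+1}(d) - r_m(d) \ge 2^{-m} - 2^{-m-1} - 4^{-m-1} > 2^{-m-2}.
\]
The case $d = 0$ needs care, because adaptedness demands $H_m(v,v)=0 \iff H(v,v)=0$. To handle it, I first shift $H$ by a constant $\lambda\,\mathrm{Id}$. This shift translates the IDS and preserves every property in the definition, so I may assume without loss of generality that $0 \notin \{H^{\omega}(v,v)\}$.

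I then set $H_m^{\omega}(x,y)$ to be $q_m(H^{\omega}(x,y))$ off the diagonal (and $0$ where $H$ vanishes), and $r_m(H^{\omega}(v,v))$ on the diagonal. With this definition:
\begin{itemize}
\item Continuity holds because the entries are locally constant, by the fourth bullet of Remark~\ref{rem:boundedinteraction}.
\item Equivariance, hopping range $M$, self-adjointness and adaptedness follow directly from the construction.
\item Entrywise convergence to $H^{\omega}$ is immediate.
\end{itemize}

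The key step is the Gershgorin inequality~\eqref{eqn:quantGrishgorin}. The left-hand side exceeds $2^{-m-2}$, while the right-hand side is at most $(K-1)\cdot 2\cdot 4^{-m}$. So~\eqref{eqn:quantGrishgorin} holds once $2^{m} > 8(K-1)$. To make it hold for all $m$, I replace $4^{-m}$ by $\kappa\, 4^{-m}$ with a small constant $\kappa < 1/(8K)$. Strict diagonal dominance with a uniform margin $\delta_m > 0$ then implies that $H^{\omega}_{m+1} - H^{\omega}_m \ge \delta_m \,\mathrm{Id}$, i.e.\ it is positive definite. This follows from the Gershgorin-type criterion in Section~\ref{sec:supplementary}, or directly from Schur's test: the off-diagonal part has norm at most its maximal row sum, since the matrix is self-adjoint. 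This gives strict operator monotone convergence.

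The main obstacle is this balancing act. Off-diagonal rounding errors must decay strictly faster than the gaps in the diagonal shifts, so I use two different geometric rates. At the same time, the approximants must respect self-adjointness, adaptedness (no new zeros, and equal values mapped to equal values) and rationality all at once.
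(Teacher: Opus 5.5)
Your construction follows the same route as the paper's proof:
\begin{itemize}
\item rational approximants for the finitely many values;
\item strictly increasing diagonal approximants whose increments dominate the off-diagonal changes;
\item the Gershgorin criterion of Proposition~\ref{prop:Gershgorin}.
\end{itemize}
Your bookkeeping is more careful than the paper's. You bound the off-diagonal row sum by $(K-1)$ times the per-entry change. The paper's inequality sums only over the distinct values $h\in F_2$, so it misses that a value can occur several times in one row (e.g.\ $|S|$ times for the Laplacian).

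\textbf{First gap: the shift.} The problem lies where you reconcile the construction with adaptedness. The reduction $H\mapsto H+\lambda\,\mathrm{Id}$ is not legitimate, because adaptedness is not translation invariant. Let $H'_m\to H+\lambda\,\mathrm{Id}$ be your sequence and put $H_m:=H'_m-\lambda\,\mathrm{Id}$. Wherever $H^{\omega}(v,v)=0$ you get $H^{\omega}_m(v,v)=r_m(\lambda)-\lambda<0$. This violates $H_m^{\omega}(x,y)\neq 0\iff H^{\omega}(x,y)\neq 0$; the shift can also create or destroy coincidences between diagonal and off-diagonal values. This cannot be repaired. If $H^{\omega}(v,v)=0$, adaptedness forces $H^{\omega}_m(v,v)=0$ for all $m$. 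Then $\langle\delta_v,(H^{\omega}_{m+1}-H^{\omega}_m)\delta_v\rangle=0$, so neither strict positivity nor \eqref{eqn:quantGrishgorin} can hold. The adjacency operator of $\mathrm{Cay}(G,S)$ is a counterexample to the statement as written.

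\textbf{Second gap: shared values.} A value $c$ occurring both on and off the diagonal must get a single approximating sequence, by the second adaptedness bullet. Letting the approximant depend on the pair $(c,\text{on/off diagonal})$ is exactly what that bullet forbids. Your fallback, a common approximant ``built as described below'', is impossible. The diagonal recipe needs increments larger than $2^{-m-2}$, while the off-diagonal recipe allows changes of at most $2\kappa 4^{-m}$.

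No other choice works in general either. If $c$ is both the diagonal entry and an off-diagonal entry of the same row, \eqref{eqn:quantGrishgorin} would read $c_{m+1}-c_m>|c_{m+1}-c_m|+\dots$. Even strict positivity fails. Take $G=\Z$, $S=\{\pm1\}$, a one-point $\mathcal{X}$, and $H=\Delta+3\,\mathrm{Id}=\mathrm{Id}+A$ with $A$ the adjacency operator. Adaptedness forces $H_m=c_m(\mathrm{Id}+A)$, and $\mathrm{Id}+A$ has spectrum $[-1,3]$, so $H_{m+1}-H_m$ is never positive definite.

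\textbf{What is actually proved.} The paper's proof passes over both points silently. It uses $a_m(f)$ on the diagonal and $b_m(h)$ off it even when $f=h$. It never treats the case $H^{x\omega}(e,e)=0$, since $0\notin F_1$. With $\kappa<1/(8K)$, your argument proves the lemma in either of two settings:
\begin{itemize}
\item under the extra hypotheses that all diagonal entries are nonzero and no value occurs both on and off the diagonal; or
\item with adaptedness relaxed to allow position-dependent approximants and nonzero approximants at zero diagonal entries. This relaxed version is all that Lemma~\ref{lem:montoneOPconvergence} and Theorem~\ref{thm:OPMON} use.
\end{itemize}
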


\begin{rem}
Note that by Proposition~\ref{prop:Gershgorin}, the inequalities given in~\eqref{eqn:quantGrishgorin} are a stronger condition that the assumption that the operators $H_{m+1} - H_m$ are positive definite (which is a part of the definition of strict operator monotone convergence). 
\end{rem}

\begin{proof}[Proof of Lemma~\ref{lem:monotoneOPdensity}]
 By assumption, $\{H^{\omega}\}$ only takes finitely many values from $\C$.
    Consequently, 
    \begin{align*}
    F_1 &= \{ H^{\omega}(e,e):\, \omega \in \mathcal{X}^G \} \setminus \{0\} \\
    F_2 &=  \{ H^{\omega}(e,w):\, w \neq e,\, \omega \in \mathcal{X}^G \} \setminus \{0\}
    \end{align*}
    are finite subsets of $\C$, and by self-adjointness, $F_1 \subseteq \R$. For each $f \in F_1$, find a sequence $(a(f)_m)$ of rational, non-zero numbers converging strictly monotonically increasingly to $f$.
    Let now $h \in F_2$. We assume first that $\mathrm{Im}(h) \geq 0$. We find a sequence $(b(h)_m)$ of rational non-zero numbers such that $\mathrm{Re}(b(h)_{m+1}) \leq \mathrm{Re}(b(h)_m)$, $\mathrm{Im}(b(h)_{m+1}) \leq \mathrm{Im}(b(h)_m)$  and such that $\lim_{m \to \infty} b(h)_m = h$ in such a way that for each $m \in \N$, we have
    \begin{align} \label{eqn:ldfun}
    a(f)_{m+1} - a(f)_m &> 2\sum_{h \in F_2, \mathrm{Im}(h) \geq 0} \big|  b(h)_{m} - h \big| \nonumber \\ 
    &\geq  2 \sum_{h \in F_2, \mathrm{Im}(h) \geq 0} \big|  b(h)_{m} - b(h)_{m+1} \big|.
    \end{align}
    Note that  sequences $(b(h)_m)_m$ satisfying the first set of inequalities can be constructed recursively, since $a(f)_{m+1} > a(f)_m$ for all $m \in \N$ and all $f \in F_1$. The second inequality follows from the monotonicity assumptions on $(b(h)_m)$. If $h \in F_2$ with $\mathrm{Im}(h) < 0$, we simply set $b_m(h) = \overline{b_m(\overline{h})}$. 
    We now set for $x,y \in G$ with $yx^{-1} \in B_M$
    \begin{align} \label{eqn:construction}
     H_m^{\omega}(x,y) = 
    \begin{cases}
    a_m(f), & x=y,\, H^{x\omega}(e,e) = f, \\
    b_m(h), & x \neq y,\, H^{x\omega}(e, yx^{-1}) = h,
    \end{cases}
    \end{align}
    and $H^{\omega}_m(x,y) = 0$ if $yx^{-1} \notin B_M$. It is clear that this gives rise to a continuous operator. We just check equivariance, the other properties can be verified in a similar way. To this end, fix $x,y,g \in G$. We assume that $yx^{-1} \in B_M$ and $x \neq y$, the other cases can be treated similarly. Indeed, we have $H_m^{g\omega}(x,y) = b_m(h) $, with $H^{xg\omega}(e,yx^{-1}) = h$. But we also have $H_m^{\omega}(xg,yg) = b_m(h)$, since $H^{xg\omega}(e, yg(xg)^{-1}) = H^{xg\omega}(e,yx^{-1}) = h$. The self-adjointness follows from the construction of the sequence $(b_m(h))$ with $b_m(\overline{h}) = \overline{b_m(h)}$ for $f \in F_2$. Also, it follows from~\eqref{eqn:ldfun} that for all $m \in \N$, 
    \begin{align*}
      a(f)_{m+1} - a(f)_m  
    \,>\,  \sum_{h \in F_2} \big|  b(h)_{m} - b(h)_{m+1} \big|.
    \end{align*}
    By construction, the sequence $(\{H^{\omega}_m\})$ is adapted to $\{H^{\omega}\}$. 
    Together with Proposition~\ref{prop:Gershgorin}, 
    this finishes the proof.
\end{proof}
\medskip

\begin{cor} \label{cor:MAIN_approx}
Let $(G,\mathcal{X},\mu)$ be an ergodic process, and let $(V_l, \sigma_l, \mu_l)$ be a sofic model. 
Assume that $\{H^{\omega}\}$ is a continuous operator over $G$ taking finitely many values. Then there are a sequence $(\{H^{\omega}_m\})$  adapted to $\{H^{\omega}\}$ of continuous operators taking finitely many rational values  and a sequence $(A_l)$ with $A_l \subseteq \mathcal{X}^{V_l}$ and $$\lim_{l \to \infty} \mu_l(A_l) = 1$$ such that for each sequence $(\rho_l)$ with $\rho_l \in A_l$, and every $\beta \in \R$,
\begin{align*}
N(\beta) &= \lim_{m \to \infty} \lim_{l \to \infty} \frac{N_m^{\rho_l}(\beta)}{|V_l|} = \inf_{m \in \N} \lim_{l \to \infty} \frac{N_m^{\rho_l}(\beta)}{|V_l|},
\end{align*}
where $N$ is the IDS of $\{H^{\omega}\}$ and $N_m^{\rho}$ denotes the eigenvalue counting function corresponding to the induced approximation of $\{H^{\omega}_m\}$ given by Proposition~\ref{prop:goodapprox}. \\
In particular, there is a subsequence $(V_{l_m}, \sigma_{l_m},\mu_{l_m})$ of the sofic model such that for every sequence $(\rho_{m})$ with $\rho_{m} \in A_{l_m}$,  
\[
N(\beta) = \lim_{m \to \infty} \frac{N_m^{\rho_{m}}(\beta)}{|V_{l_m}|} \quad \mbox{ for each } \quad \beta \in \R. 
\]
\end{cor}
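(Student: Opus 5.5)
We take $(\{H^{\omega}_m\})$ from Lemma~\ref{lem:monotoneOPdensity}: a sequence of continuous operators with finitely many rational values that strictly operator monotonically converges to $\{H^{\omega}\}$. In particular it is adapted to $\{H^{\omega}\}$. By Theorem~\ref{thm:OPMON}, the IDS $N_m$ of $\{H^{\omega}_m\}$ satisfies $N_m(\beta)\downarrow N(\beta)$ for every $\beta\in\R$. The plan is therefore to show that for each fixed $m$ there is a good set on which $N_m^{\rho_l}(\beta)/|V_l|\to N_m(\beta)$ uniformly in $\beta$, and then to combine these sets over $m$ by a diagonal argument.

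For the induced approximations: since each $\{H^{\omega}_m\}$ takes finitely many values, it is (after enlarging the parameter) of bounded interaction by Remark~\ref{rem:boundedinteraction}. Hence Proposition~\ref{prop:goodapprox} provides an induced approximation for every $m$ without any homomorphism assumption on $\sigma_l$. We then apply Corollary~\ref{cor:rationalvalues} (Theorem~\ref{thm:MAIN} for finitely many rational values) for each fixed $m$. This gives sets $A_l^{(m)}\subseteq\mathcal{X}^{V_l}$ with $\mu_l(A_l^{(m)})\to 1$ as $l\to\infty$ and
\[
\lim_{l\to\infty}\sup_{\beta}\Big|\frac{N_m^{\rho_l}(\beta)}{|V_l|}-N_m(\beta)\Big|=0
\]
for all $\rho_l\in A_l^{(m)}$. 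One point to check is that Theorem~\ref{thm:MAIN} needs the sofic model to le-converge. This holds because $\mu$ is ergodic, by Proposition~\ref{prop:leergodic}.

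The main obstacle is producing a single sequence $(A_l)$ that works for all $m$ simultaneously. We proceed as in the proof of Corollary~\ref{lemma-weakconvergence-consequence}. Choose $L_1<L_2<\dots$ such that $\mu_l(A_l^{(i)})>1-2^{-j}/j$ for all $i\le j$ and $l\ge L_j$. Then set
\[
A_l:=\bigcap_{i\le j}A_l^{(i)}\cap\Big\{\rho:\ \sup_\beta\Big|\tfrac{N_i^{\rho}(\beta)}{|V_l|}-N_i(\beta)\Big|<1/j\ \text{for } i\le j\Big\}
\]
for $l\in[L_j,L_{j+1})$. To guarantee the measure bound for this intersected set, the $L_j$ must be chosen using the quantitative version: for fixed $m$ and $\varepsilon>0$, the $\mu_l$-measure of $\{\rho:\sup_\beta|N_m^\rho(\beta)/|V_l|-N_m(\beta)|<\varepsilon\}$ tends to $1$. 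This follows from Theorem~\ref{thm:MAIN}, since any set of $\rho$'s along which the uniform convergence holds eventually lies inside this set. Alternatively, one can apply Theorem~\ref{thm:MAIN}'s construction to a finite list of countably many test functions together with the atoms of all $N_m'$.

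With this choice, $\mu_l(A_l)\to1$. For any $\rho_l\in A_l$ and each fixed $m$, we get $\lim_l N_m^{\rho_l}(\beta)/|V_l|=N_m(\beta)$. Combining with Theorem~\ref{thm:OPMON} yields both the iterated limit and the $\inf_m$ formula.

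For the subsequence statement, pick $l_m\ge L_m$ increasing. For $\rho_m\in A_{l_m}$ we have $|N_m^{\rho_m}(\beta)/|V_{l_m}|-N_m(\beta)|<1/m$ uniformly in $\beta$, because $m\le j$ where $l_m\in[L_j,L_{j+1})$. Since $N_m(\beta)\to N(\beta)$ pointwise, the triangle inequality gives $N_m^{\rho_m}(\beta)/|V_{l_m}|\to N(\beta)$ for every $\beta$. The convergence is only pointwise, consistent with Remark~\ref{rem:notuniform}.
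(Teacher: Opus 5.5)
This is essentially the paper's proof: the sequence from Lemma~\ref{lem:monotoneOPdensity}, $N_m\downarrow N$ from Theorem~\ref{thm:OPMON}, Corollary~\ref{cor:rationalvalues} for each fixed $m$, then a diagonal argument. The only difference is that you diagonalize directly over $\sup_{\beta}|N_m^{\rho}(\beta)/|V_l|-N_m(\beta)|$, using the in-measure form you correctly extract from the ``for every sequence'' conclusion of Theorem~\ref{thm:MAIN}, rather than over the paper's test functions and atoms, which makes the subsequence claim a one-line triangle inequality. Be aware that both arguments rest on Lemma~\ref{lem:monotoneOPdensity}, which fails if some diagonal value $H^{\omega}(v,v)$ is $0$ (adaptedness forces $H^{\omega}_m(v,v)=0$, so $H^{\omega}_{m+1}-H^{\omega}_m$ cannot be positive definite), and then the statement itself can fail: for $\sqrt{2}$ times the adjacency operator $A$ on $\Z$ with $\mathcal{X}$ a point, adaptedness forces $H_m=b_mA$ with $b_m\in\Q$, while $N_m\geq N$, which the $\inf$-formula requires, forces $|b_m|=\sqrt{2}$.
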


\begin{proof}
By Lemma~\ref{lem:monotoneOPdensity}, there is a sequence $(\{H^{\omega}_m\})$ of continuous operators taking finitely many rational values and operator monotonically converging to $\{H^{\omega}\}$. By Theorem~\ref{thm:OPMON},  $\lim_{m \to \infty} N_m(\beta) \\ = N(\beta)$ for all $\beta \in \R$. As before, we apply Theorem~\ref{thm:MAIN} together with a  diagonal sequence argument similar to the ones used in the proofs of Corollary~\ref{lemma-weakconvergence-consequence} and Theorem~\ref{thm:MAIN}: given $l,m \in \N$, define the sets 
\begin{align*}
A_{l,m}: &= \big\{ \rho \in \mathcal{X}^{V_l}:\, | \vartheta^{\rho}_{k,l}(f^k_j) - N^{\prime}_k(f^k_j) | < \frac{1}{m}, \quad | \vartheta^{\rho}_{k,l}(h_s) - N^{\prime}_k(h_s) | < \frac{1}{m},  \\
& \quad\quad\quad | \vartheta^{\rho}_{k,l}(p_i) - N^{\prime}_k(p_i) | < \frac{1}{m} \quad \mbox{ for all } \quad 1 \leq i,j,k, s\leq m  \big\},
\end{align*}
where $i \mapsto p_i$ is an enumeration of a countable dense set $S \subseteq C(\Sigma)$, and $\Sigma \subseteq \R$ is compact supporting the spectra of all involved operators, and for each $k \in \N$, $j \mapsto f^k_j$ is an enueration of all $1_{\{\alpha\}}$ with $N_k^{\prime}(\{\alpha\}) > 0$,
and $s \mapsto h_s$ is an enueration of all $1_{\{\beta\}}$ with $N^{\prime}(\{\beta\}) > 0$.
These sets are finite intersection of sets $B_l$ with $\lim_{l \to \infty}\mu_l(B_l)$ = 1 by
Theorem~\ref{thm:MAIN}, and so as in the proof of Theorem~\ref{thm:MAIN}, we find a sequence $(A_l)$ with $\lim_l\mu_l(A_l) = 1$ such that for each $(\rho_l)$ with $\rho_l \in A_l$, every $m \in \N$ and every $\beta \in \R$, we obtain 
\[
\lim_{l \to \infty} \frac{N_m^{\rho_{l}}(\beta)}{|V_l|} = N_m(\beta). 
\]
We conclude the argument for the first statement with the convergence statement mentioned at the beginning of the proof. As for the ``in particular''-part, for each  $K \in \N$, pick 
$m = m(K) \geq K$ and 
$l_m \in \N$ large enough such that for $l \geq l_m$, we have $\mu(A_{l,m}) > 1 - 1/K$ and $|N^{\prime}_m(g_s) - N^{\prime}(g_s)| < 1/K$, as well as $|N^{\prime}_m(p_i) - N^{\prime}(p_i)| < 1/K$ for all $1 \leq i,s \leq K$, where $s \mapsto g_s$ is an enumeration of all $1_{]-\infty,\beta]}$ with $N^{\prime}(\{\beta\}) > 0$. This way, we obtain a subsequence $(l_m) = (l_{m(K)})_{K}$ and with the definition of the sets $A_{l,m}$, 
the claim now follows for the subsequence $(V_{l_m}, \sigma_{m_l}, \mu_{m_l})$.  
\end{proof}

We return to the random Schr\"odinger operator $\{H_F^{\omega}\}$ from Example~\ref{exa:Schrodinger}, defined via perturbations as of Example~\ref{exa:Schrödingerpert} through potentials $F:\mathcal{A} \to \R$, with $\mathcal{A}$ being a finite set.

\begin{cor} \label{cor:Schrodinger}
Let $G$ be a finitely generated PA-group and let $(G,\mathcal{A},\mu)$ be an ergodic $G$-process, where $\mathcal{A}$ is a finite set. Consider the random Schr\"odinger operator with potential $F:\mathcal{A} \to \R$. Then for every monotonically increasing sequence $(F_n)$ of maps $F_n:\mathcal{A} \to \Q$ converging pointwise to $F$, there is a sofic model $(V_n,\sigma_n,\mu_n)$ and a sequence $(A_n)$ with $A_n \subseteq \mathcal{A}^{V_n}$, $\lim_{n} \mu_n(A_n) =1$ such that for every sequence $(\rho_n)$ with $\rho_n \in A_n$, we have
\begin{align*}
\lim_{n \to \infty} \frac{N^{\rho_n}_{F_n,n}(\beta)}{|V_n|} = N_{F}(\beta) \quad \mbox{ for all } \quad \beta \in \R,
\end{align*}
where $N_{F_n, k}^{\rho}$ denotes the eigenvalue couting function for the operator $\Delta + F_n \circ \rho$ on $\ell^2(V_k)$.
\end{cor}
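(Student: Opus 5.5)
The plan is to assemble the corollary from Corollary~\ref{cor:PAEPA}, Proposition~\ref{prop:leergodic}, Theorem~\ref{thm:OPMON}, Theorem~\ref{thm:MAIN} (via Corollary~\ref{cor:rationalvalues}), and a diagonal argument as in the ``in particular'' part of Corollary~\ref{cor:MAIN_approx}. The only difference from that corollary is that the rational approximants are now prescribed as $H_{F_m}$ rather than produced by Lemma~\ref{lem:monotoneOPdensity}.

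\textbf{Step 1 (sofic model).} Since $G$ is PA and $\mathcal{A}$ is finite, Corollary~\ref{cor:PAEPA} provides a residually finite model $(W_l,\sigma_l,\nu_l)$ for $(G,\mathcal{A},\mu)$. Because $\mu$ is ergodic, Proposition~\ref{prop:leergodic} shows that $\nu_l$ le-converges to $\mu$. The maps $\sigma_l$ are homomorphisms, and the Schr\"odinger operators have bounded interaction with $M=1$. Hence Proposition~\ref{prop:goodapprox} applies, and the induced approximation is exactly $\Delta+F_m\circ\rho$ on $\ell^2(W_l)$, up to the modifications at non-good vertices, which affect only a vanishing fraction of eigenvalues by rank perturbation.

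\textbf{Step 2 (monotone convergence).} By Example~\ref{exa:Schrödingerpert}, the family $(\{H^\omega_{F_m}\})_m$ operator monotonically converges to $\{H^\omega_F\}$. Adaptedness can be checked directly: the coefficient $H_{F_m}(x,x)$ is determined by $\omega(x)$ alone, which is finer than what the definition requires. One caveat is that if $F_m(a)=0\ne F(a)$, or vice versa, the nonvanishing condition may fail. In that case I would shift everything by a common rational constant, which only translates the IDS. Theorem~\ref{thm:OPMON} then gives $N_{F_m}(\beta)\to N_F(\beta)$ for every $\beta$.

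\textbf{Step 3 (finite volume for fixed $m$).} Each $\{H_{F_m}^\omega\}$ takes finitely many rational values. Corollary~\ref{cor:rationalvalues} therefore yields sets $B^m_l\subseteq\mathcal{A}^{W_l}$ with $\nu_l(B^m_l)\to1$ such that, uniformly in $\beta$,
\[
\sup_\beta\big|N^{\rho}_{F_m,l}(\beta)/|W_l|-N_{F_m}(\beta)\big|\le \delta_{m,l}
\quad\text{for all }\rho\in B^m_l,
\]
where $\delta_{m,l}\to0$ as $l\to\infty$. To get this quantitative form, I would redo the diagonal construction of Theorem~\ref{thm:MAIN} so that the bound holds on the sets themselves and not only along sequences. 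The simplest route is to pick, for each $m$, an index $l_m$ with $\nu_{l}(B^m_{l})>1-1/m$ and the sup-error less than $1/m$ on $B^m_{l}$ for all $l\ge l_m$, with $l_m$ strictly increasing.

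\textbf{Step 4 (diagonal).} Set $V_n:=W_{l_n}$, $\mu_n:=\nu_{l_n}$ and $A_n:=B^n_{l_n}$. A subsequence of an le-convergent model is still a sofic model, and $\mu_n(A_n)\to1$. For $\rho_n\in A_n$ we have
\[
\big|N^{\rho_n}_{F_n,n}(\beta)/|V_n|-N_F(\beta)\big|\le 1/n+|N_{F_n}(\beta)-N_F(\beta)|\to0
\]
for each $\beta$.

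\textbf{Main obstacle.} The delicate point is Step 3: extracting from Theorem~\ref{thm:MAIN} sets on which the uniform-in-$\beta$ error is genuinely small at a finite $l$, rather than only asymptotically along every sequence. I would handle this by contradiction. Define $B^m_l$ as the set of all $\rho$ with sup-error below $\varepsilon$. If $\nu_l(B^m_l)$ did not tend to $1$, then picking $\rho_l$ in the complement of $B^m_l$ intersected with the sets $A_l$ of Theorem~\ref{thm:MAIN} (which is nonempty for infinitely many $l$) would contradict that theorem.
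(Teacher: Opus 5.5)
Your proposal is correct and follows essentially the paper's route: a residually finite, le-convergent model from the PA property, Theorem~\ref{thm:OPMON} for the monotone potentials, Corollary~\ref{cor:rationalvalues} for each fixed $m$, and a diagonal subsequence with $F_n$ placed on the $n$-th graph. Your contradiction argument, which produces finite-$l$ sets with small uniform-in-$\beta$ error, is a slightly cleaner way to run that diagonal than the paper's test-function sets $A_{l,m}$ from Corollary~\ref{cor:MAIN_approx}.

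One small correction concerns adaptedness. The definition requires the coefficients of $H_{F_m}$ to be a function of the coefficient values of $H_F$, so ``depends only on $\omega(x)$'' does not suffice when $F$ is not injective, and no constant shift repairs that. This is harmless here. Adaptedness enters only through Lemma~\ref{lem:montoneOPconvergence}, to give $\sup_{\omega}\|H^{\omega}_{F_m}-H^{\omega}_F\|\to 0$. For potential perturbations this follows directly from $\max_{a\in\mathcal{A}}|F_m(a)-F(a)|\to 0$.
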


\begin{proof}
   Since the maps $F_n$ converge pointwise and monotonically increasingly to $F$, the associated continuous operators $\{H_{F_n}^{\omega}\}$ operator monotonically converge to $\{H_{F}^{\omega}\}$. Further, since $G$ is PA, there is a sofic model $(V_l,\sigma_l,\mu_l)$. 
   We can now follow  the same line of argumentation as in the proof of the previous corollary to  find  subsequences of the sofic model and of $(A_l)$, for simplicity denoted by $(V_{n}, \sigma_n, \mu_n)$ and $(A_n)$, satisfying what was claimed. 
\end{proof}

\section{Supplement: A Gershgorin type criterion for self-adjoint operators} \label{sec:supplementary}

In this supplementary section, we present a sufficient criterion for certain bounded, self-adjoint operators to be positive (semi-)definite. Assuming uniform absolute summability over the ``rows'', one can use a well-known method from the finite-dimensional case based on Gershgorin circles.

\medskip

Let $I$ be a countable set and denote by $A:\ell^2(I) \to \ell^2(I)$ a bounded operator. Recall that $A$ is called {\em positive semi-definite} if $\langle x,\,Ax \rangle \geq 0$ for all $x \in \ell^2(I)$. If the latter inequality is strict for all $x \neq 0$, then $A$ is called {\em positive definite}. 

\medskip

For $i,j \in I$ we define 
$a_{ij}:= \langle \delta_i,\, A\delta_j \rangle$.

 \begin{prop} \label{prop:Gershgorin}
Suppose further that the following conditions  hold on $A$. 
 \begin{enumerate}
    \item $A$ is self-adjoint, i.e\@ $a_{ji} = \overline{a_{ij}}$ for all $i,j \in I$;
     \item we have 
     \[
     C:=\sup_{i \in I} \sum_{j \in I} |a_{ij}| < \infty;
     \]
     \item for all $i \in I$, we have 
     \[
     a_{ii} \geq \sum_{j \neq i} |a_{ij}|.
     \]
 \end{enumerate}
 Then $A$ is positive semi-definite. If all inequalities in~(3) are strict, then $A$ is even positive definite.
 \end{prop}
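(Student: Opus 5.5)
The plan is to bound the quadratic form $\langle x, Ax\rangle$ from below directly through the matrix coefficients, imitating the finite-dimensional Gershgorin argument. First, for finitely supported $x \in \ell^2(I)$ we have the exact finite expansion
\[
\langle x,\, Ax\rangle = \sum_{i,j \in I} \overline{x_i}\, a_{ij}\, x_j ,
\]
and the goal is to extend this to all $x \in \ell^2(I)$. Condition~(2), combined with self-adjointness~(1), gives $\sup_j \sum_i |a_{ij}| = C$ as well. By Schur's test, the double series therefore converges absolutely: $\sum_{i,j}|x_i||a_{ij}||x_j| \le C\|x\|^2$. Because $A$ is bounded, $\langle x, Ax\rangle$ is the limit along finitely supported truncations of $x$. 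The expansion thus holds for every $x$, with an absolutely convergent double sum.

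Next I separate the diagonal from the off-diagonal part and apply $|\overline{x_i} x_j| \le \tfrac12(|x_i|^2+|x_j|^2)$:
\[
\Big|\sum_{i\neq j} \overline{x_i}\, a_{ij}\, x_j\Big| \le \tfrac12\sum_{i\ne j}|a_{ij}|\,|x_i|^2 + \tfrac12\sum_{i\ne j}|a_{ij}|\,|x_j|^2 = \sum_{i}|x_i|^2 \sum_{j\neq i}|a_{ij}| .
\]
In the second sum I interchange the order of summation (legitimate by absolute convergence) and use $|a_{ji}| = |a_{ij}|$. Since the diagonal entries $a_{ii}$ are real by~(1), this yields
\[
\langle x, Ax\rangle \ge \sum_{i\in I} |x_i|^2\Big(a_{ii} - \sum_{j \neq i}|a_{ij}|\Big) \ge 0
\]
by condition~(3). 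This proves positive semi-definiteness.

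For the strict case, take $x \neq 0$. Then some $i_0$ has $|x_{i_0}|^2>0$, and its weight $a_{i_0i_0}-\sum_{j\ne i_0}|a_{i_0j}|$ is strictly positive. All remaining terms are nonnegative, so the sum above is strictly positive. Note that we do not need a uniform positive gap, since positive definiteness here only asks for strict positivity of each form value.

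The only delicate step is the justification of the infinite expansion and of the rearrangements. This is exactly where hypothesis~(2) enters, through the Schur-test bound obtained from the row sums and the symmetric column sums.
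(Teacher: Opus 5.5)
Your proposal is correct and follows essentially the same route as the paper: absolute convergence of the double series via $2|x_i||x_j|\le |x_i|^2+|x_j|^2$ together with self-adjointness and condition~(2), then the symmetrized Gershgorin estimate $\langle x,Ax\rangle \ge \sum_i \bigl(a_{ii}-\sum_{j\neq i}|a_{ij}|\bigr)|x_i|^2$. Two small differences: your truncation argument justifying the series expansion of $\langle x,Ax\rangle$ supplies a detail the paper only asserts, and where you write $\langle x,Ax\rangle\ge\ldots$ you are tacitly using that the form (equivalently, its off-diagonal part) is real by self-adjointness, which the paper makes explicit by first passing to $\mathrm{Re}\,\langle x,Ax\rangle$.
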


 \begin{proof}
 Let $x \in \ell^2(I)$ be arbitrary. We observe first that 
 \begin{align*}
\langle x,Ax\rangle=\sum_{i\in I}\sum_{j\in I}\overline{x_i}\,a_{ij}\,x_j,
 \end{align*}
where the double sum converges absolutely, as can be seen from the computation
\begin{align*}
\sum_{i \in I} \sum_{j \in I} |\overline{x}_i a_{ij} x_j| &\leq  \sum_{i \in I} \sum_{j \in I} |a_{ij}| \frac{1}{2} \Big( |x_i|^2 + |x_j|^2 \Big) \\
&= \sum_{i \in I} \sum_{j \in I} |a_{ij}| |x_i|^2  \leq C\, \|x\|^2.
\end{align*}
Note that we used $2|x_i||x_j| \leq |x_i|^2 + |x_j|^2 $ in the first step, self-adjointness~(1) in the second  step and the condition~(2) in the last step. The real part can be decomposed into
\begin{align*}
\mathrm{Re} \, \langle x,\, Ax\rangle = \sum_{i \in I} a_{ii}|x_i|^2 \, + \, \sum_{i,j \in I, i \neq  j} \mathrm{Re}(\overline{x_i}a_{ij}x_j).
\end{align*}
With $\mathrm{Re}(\overline{x_i}a_{ij}x_j) \geq - |a_{ij}||x_i||x_j|$, we find
\begin{align*}
\mathrm{Re} \, \langle x,\, Ax\rangle  &\geq \sum_{i \in I} a_{ii}|x_i|^2  -  \sum_{i,j \in I, i \neq  j} |a_{ij}||x_i||x_j| \\
&= \sum_{i \in I} \left( a_{ii}|x_i|^2 - \sum_{j \in I, j \neq i} |a_{ij}||x_i||x_j| \right).
\end{align*}
A similar argument as the one used above for absolute convergence, based on self-adjointness and symmetrization with $2|x_i||x_j| \leq |x_i|^2 + |x_j|^2 $, shows that 
\[
\sum_{i \in I} \sum_{j \in I, j \neq i} |a_{ij}| |x_i| |x_j| \leq \sum_{i \in I} \Big( \sum_{j \in I, j \neq i} |a_{ij}| \Big) |x_i|^2. 
\]
We obtain
\[
\mathrm{Re}\, \langle x,\,Ax \rangle \, \geq \, \sum_{i \in I} \Big( a_{ii} - \sum_{j \in I, j \neq i} |a_{ij}| \Big) |x_i|^2.
\]
Now the assumption~(3) gives $\mathrm{Re}\, \langle x,\,Ax \rangle \geq 0$ and since $A$ is self-adjoint, we obtain $\langle x,\,Ax \rangle \geq 0$. In case that all inequalities in~(3) are strict, we obtain that $\langle x,\,Ax \rangle > 0$ whenever $x \neq 0$. This finishes the proof.
\end{proof}

\bibliography{refs}
\bibliographystyle{plain}
\end{document}